\g@addto@macro\th@plain{\thm@headpunct{}}
\DeclareRobustCommand\widecheck[1]{{\mathpalette\@widecheck{#1}}}
\def\@widecheck#1#2{%
    \setbox\z@\hbox{\m@th$#1#2$}%
    \setbox\tw@\hbox{\m@th$#1%
       \widehat{%
          \vrule\@width\z@\@height\ht\z@
          \vrule\@height\z@\@width\wd\z@}$}%
    \dp\tw@-\ht\z@
    \@tempdima\ht\z@ \advance\@tempdima2\ht\tw@ \divide\@tempdima\thr@@
    \setbox\tw@\hbox{%
       \raise\@tempdima\hbox{\scalebox{1}[-1]{\lower\@tempdima\box
\tw@}}}%
    {\ooalign{\box\tw@ \cr \box\z@}}}
\newtheorem{theorem}{Theorem}[section]
\newtheorem{lemma}{Lemma}[section]
\newtheorem{proposition}{Proposition}[section]
\newtheorem{definition}{Definition}[section]
\newtheorem{corollary}{Corollary}[section]
\newtheorem{example}{Example}[section]
\newtheorem{remark}{Remark}[section]
\DeclareMathOperator{\R}{\mathbb{R}}
\DeclareMathOperator{\E}{\mathbb{E}}
\DeclareMathOperator{\V}{\mathbb {V}}
\DeclareMathOperator{\tr}{tr}
\DeclareMathOperator{\PP}{\mathbb {P}}
\DeclareMathOperator{\ZV}{\mathcal{Z}_{\mathcal{V}}}
\DeclareMathOperator{\ZG}{\mathcal{Z}_G}
\DeclareMathOperator{\CV}{\mathcal{P}_{\mathcal{V}}}
\DeclareMathOperator{\CVa}{\mathcal{Q}_{\mathcal{V}}}
\DeclareMathOperator{\VV}{\mathcal {V}}
\DeclareMathOperator{\ZVa}{\mathcal{Z}^\ast_{\mathcal{V}}}
\DeclareMathOperator{\RV}{\mathcal{R}_{\underline{s}}}
\DeclareMathOperator{\RVa}{\mathcal{R}^\ast_{\underline{s}}}
\DeclareMathOperator{\vc}{vec}
\def \kA{\mathcal{A}}
\providecommand{\scalar}[1]{\left\langle#1\right\rangle}
\providecommand{\set}[2]{ \left\{\,#1 \colon #2\,\right\} }
\newcommand{\nc}{\newcommand}
\nc{\us}{\underline{s}}
\nc{\un}{\underline{n}}
\nc{\HV}{H_{\mathcal{V}}}
\nc{\GL}{GL}
\title[Wishart laws and variance function on homogeneous cones]
{ Wishart {laws} and variance function on homogeneous cones}
\author{Piotr Graczyk}
\address{Laboratoire de Math\'ematiques LAREMA, Universit\'e d'Angers,2 Boulevard Lavoisier, 49045 Angers Cedex 01, France}
 \email{graczyk@univ-angers.fr}
\author{Hideyuki Ishi}
\address{Graduate School of Mathematics, Nagoya University, Furo-cho, Nagoya 464-8602, Japan\newline
\phantom{JS} JST, PRESTO, 4-1-8, Honcho, Kawaguchi 332-0012, Japan}
  \email{hideyuki@math.nagoya-u.ac.jp} 
\author{Bartosz Ko\l{}odziejek}
\address{Faculty of Mathematics and Information Science, Warsaw University of Technology,
  Koszykowa 75, 00-662 Warsaw, Poland}
  \email{b.kolodziejek@mini.pw.edu.pl}
\keywords{natural exponential families; variance function; Wishart laws; Riesz measure; homogeneous cones; graphical cones}
\begin{document}

\begin{abstract}
We present a systematic study of
Riesz measures and their natural exponential families of Wishart laws on a 
homogeneous cone. We compute explicitly the inverse of the mean map 
and the variance function of a Wishart exponential family.
\end{abstract}

\maketitle

\section{Introduction}
Modern statistics and multivariate analysis require use of models on
	subcones of the cone $\mathrm{Sym}_+(n,\R)$ of positive definite
	symmetric matrices. Such subcones are obtained for example by
	prescribing some of the off-diagonal elements to be $0$. One considers
	two types of Wishart laws on such subcones \cite{LM07}. First Type
	corresponds to the law of the Maximum Likelihood estimators of
	covariance matrices in a sample of size $n$ of a multidimensional
	normal vector $(X_1, \ldots, X_p)$, subject to conditional
	independence constraints, \cite{Lbook,LM07}. The inverses of the
	second Type Wishart forms a conjugate family of priors for the
	covariance parameter of the graphical Gaussian model. Some sub-classes
	of second Type Wishart laws, called $G$-Wishart in \cite{LM07}, are
	Diaconis-Ylvisaker conjugate priors (\cite{D-Y}) for the precision
	matrix.

Many of such subcones are homogeneous,  i.e. their automorphism group acts transitively. 
Recall that in the theory of graphical models \cite{Lbook}, a decomposable graph generates a homogeneous cone if and only if 
graph induced by any quadruple
of vertices is not the graph $\bullet-\bullet-\bullet-\bullet$, denoted by $A_4$, cf. \cite{Is13, LM07}. 	
Among all decomposable graphs with four
vertices, approximately 80\% are homogeneous. 

	 	 Consider the case when the vector $(X_1,\ldots,X_p)$ may be partitioned into independent subvectors. We impose no additional conditional independence constraints between elements of a given block. In such case, the covariance matrix as well as the precision matrix has block diagonal decomposition. Such model corresponds to a homogeneous cone and is often considered in statistics of sparse graphical models, 
	 	see the book of Hastie, Tibshirani and Wainwright	\cite[Section 9.3.3]{HastieTW}. 
	
	 	 Sparse graphical models have been intensely studied for over 15 years. However, the important problem
	 	of choosing thresholds for detection of dependence
	 	(equivalently, for the identification of non-zero terms in the precision matrix) is still open.
	 	The knowledge of Wishart laws on these models may be very useful in solving this problem. 
	 
	  Let us underline one more statistical motivation for {developing} the  theory of Wishart laws on homogeneous cones.
	 	Modern Big Data statistics concentrates on the case $n\ll p$
	 	in the multivariate normal sample of the vector $(X_1,\ldots,  X_p)$, cf. \cite{HastieTW}.
	 	This case corresponds to singular Wishart laws, which at the moment are fully described only on homogeneous cones.
	 	 cf. Theorems \ref{riesz} and \ref{hyper} below. 	

	 Consequently, statistical graphical models are often based on homogeneous graphs {(decomposable and not containing $A_4$ as an induced subgraph)}. This fact is regrettably 
	still ignored by most of the statistical community. 
	
	Moreover, important recent statistical articles \cite{LM07,KBala}  point out the significance    
	 of homogeneous cones among cones corresponding to decomposable and DAG graphical models and devote 
	 much space to multivariate analysis on homogeneous cones.  Further, Wishart laws on homogeneous cones supply many good
	 	examples of exponential families
	 	with very explicit calculation. This should be emphasized as a "raison
	 	d'\^etre" of our research.

The preponderant role of homogeneous cones among subcones of $\mathrm{Sym}_+(n,\R)$
strongly motivates research on Wishart laws on general homogeneous cones.
Families of Wishart laws on homogeneous cones were   
first studied by Andersson and Wojnar \cite{A-W}, Boutouria \cite{Bou} and
Letac and Massam \cite{LM07}. {The pioneering paper \cite{A-W}
is very technical {and inaccessible}, due  to the use of methods  based on Vinberg algebras.}  
 The natural approach to this topic, based on quadratic maps and matrix realizations of
homogeneous cones, was proposed in Graczyk and Ishi \cite{GI14,Is14}
and used in Ishi and Ko{\l}odziejek \cite{IK15}. 

This article is a continuation of \cite{GI14,Is14,IK15}, 
 however here we identify the dual space using the trace inner product instead of the standard inner product.
Indeed, the trace inner product
 will be indispensable in future applications to statistics.
Though the difference between standard and trace inner products is rather technical,
 it gives rise to some changes in formulas in our previous works
 about generalized power 
 functions and Wishart exponential families, i.e. the natural exponential families generated by Riesz measures.
Thus we repeat and simplify
 some definitions and proofs from \cite{GI14} and \cite{Is14}
 for completeness and convenience of the reader.
We hope that, eventually,
 this paper presents our methods and results in a manner which is accessible to mathematical statisticians.

The variance function  is an important {characteristic}
of a natural exponential family, cf. \cite{BN14}.
Let us consider a classical Wishart exponential family 
on the cone $\mathrm{Sym}_+(n,\R)$, i.e. the natural exponential family 
generated by 
 a {Riesz} measure $\mu_{{p}}$ {on $\mathrm{Sym}_+(n,\R)$, } with  
 the Laplace transform
$L_{\mu_{p}}(\theta)=(\det\theta)^{-{{p}}}$, for 
$\theta\in \mathrm{Sym}_+(n,\R)$.
Here $p$ belongs to the Gindikin-Wallach set (in this context also called the {J}\o rgensen set)
$\Lambda=\{1/2,1,3/2,\ldots,(n-1)/2\}
\cup((n-1)/2,\infty)$. It is well known and straightforward to check (see for example \cite{Le89}) that 
the variance function of NEF generated by $\mu_p$ is given by
\begin{align}\label{first}
\V_{{p}}(m)=\frac{1}{{{p}}}\rho(m)
\end{align}
where $
 m\in {(\mathrm{Sym}_+(n,\R))^\ast\equiv}
\mathrm{Sym}_+(n,\R)$ 
 and $\rho(m)$ is a linear map from $\mathrm{Sym}(n,\R)$ to itself defined by
 $\rho(m)Y=m Y m^\top$, $Y\in{\mathrm{Sym}}(n,\R)$.
Generally, Wishart exponential family is the natural exponential family generated by some Riesz measure.
In this paper we  give  explicit formulas for the variance function of Wishart exponential family on any homogeneous cone.

Let us describe shortly the plan of this paper. In Section \ref{nef} we recall 
the definition
of a natural exponential family generated by a positive measure 
and we introduce their characteristics, mean  and variance, used and studied throughout the whole paper.

Sections \ref{HCones}
and \ref{Riesz} are devoted to introducing the main tools for the analysis of Wishart
exponential families on  homogeneous cones.
Like in \cite{GI14}, we consider two types of homogeneous cones,
 $\CV$ which is in a matrix realization and its dual cone $\CVa$,
 and Riesz measures and Wishart families on them. 
This corresponds to the concept of Type I and Type II Wishart laws defined and studied in \cite{LM07}. 
 
Any homogeneous cone is linearly isomorphic to some matrix cone $\CV$ \cite{Is15}. 
It was observed in \cite{GI14} that the matrix {realization} of a homogeneous cone makes analysis of Riesz and Wishart measures much easier.
{The present paper is also based on this technique, which is reviewed in Section \ref{Matrix}.} 

In Section  \ref{gener}, we define
the generalized power functions  $\delta_{\us}$ and $\Delta_{\us}$ on the cones $\CVa$ and $\CV$, respectively. 
 In Proposition  \ref{proPhi}(iii), 
we give a formula  
 (\ref{New_delta})
 for the power function $\delta_{\us}$ , which  is new and useful.
In Definition \ref{hat} we introduce 
 an important map $\xi \rightarrow \hat \xi$ between $\CVa$ and
$\mathrm{Sym}_+(N,\R)$,
{inspired by an analogous map playing a fundamental role
 for decomposable graphical models \cite{Lbook}.}

In Section \ref{SectionInvMean}, 
 we first prove Formula \eqref{psi}, which serves as a simple and useful tool in our argument.
Then we deduce from \eqref{psi} an explicit evaluation of the inverse of the mean map ({Theorem}  \ref{InvMean}).
It allows us to find the Lauritzen formula on any (not only graphical) homogeneous cone.
We get in Section \ref{VarQ} the variance function formula   
for Wishart exponential families defined on the  cone $\CVa$. 
The proof is based on Formula \eqref{psi} again.
{Note that   Riesz and  Wishart measures on cones $Q_G$ and $P_G$ 
 corresponding  to the decomposable graphs $G$ were studied 
 in \cite{LM07} and (uniquely in case $Q_G$) in  
 \cite{A-K}. 
When the cones $Q_G$ and $P_G$ are homogeneous, the integral formulas in   \cite[Th. 3.1 and 3.2]{LM07} 
and  \cite[Th. 5.1]{A-K}   are a special case of our results     
on the cones $\CVa$ and $\CV$, when the dimensions of all the blocks
in the matrix realization of $\CVa$ are equal to 1.}

In Section \ref{VarP} we give results on  the variance function formula   
for Wishart exponential families on $\CV$. In particular, we {offer}
a practical approach to the construction of a matrix realization of 
the dual cone $\CVa$, using basic quadratic maps. 
Note that if a homogeneous cone $\CV$ is in matrix realization, then, in general, $\CVa$ is not, which makes analysis on $\CVa$ harder.

The last Section \ref{appli} contains applications of the results and of the 
methods of this paper to symmetric cones and graphical homogeneous cones.
We improve and complete in this way the results 
of \cite{HaLa01} and \cite{LM07}.

 In the Appendix we indicate why the results of an unpublished paper
\cite{BH09} are false or unproven  and cannot be compared with the results of our paper. The unpublished paper
\cite{BH09} should not be used by statisticians as a reference, that regrettably happens. 
The errors and deep gaps of \cite{BH09} illustrate well how difficult and fine the analysis
on homogeneous cones is. 

{Let us end this Introduction by some remarks on  terminology.
	We call the elements of
	the natural exponential family 
	generated by 
	a Riesz measure {\it Wishart laws (measures, {distributions})} .
   This terminology is classical on the symmetric cone $\mathrm{Sym}_+(n,\R)$
	and 
	is used for cones corresponding to graphical models in the Lauritzen monography \cite{Lbook}
	and in the fundamental papers \cite{LM07, KBala}. However, 
	the elements of
	the natural exponential family 
	generated by 
	a Riesz measure are called  {\it  ``Riesz probability (or generalized) measures''}
	in
	\cite{A-K,A-W,Bou,HaLa01}. We choose the terminology based on the  classical case of the cone $\mathrm{Sym}_+(n,\R)$ and on the
	basic references \cite{Lbook,LM07,KBala}. Moreover, the notion of
	a {\it Riesz distribution} comes from functional analysis (cf. Faraut-Koranyi book \cite{FaKo1994}), in particular a classical Riesz measure is never finite.}
	
	{ The elements of a natural exponential family 
	generated by 
	a Riesz measure are Wishart laws, so instead of
	saying ``natural exponential family 
	generated by 
	a Riesz measure'' or
	``Riesz exponential family'', we say {\it Wishart exponential family}. Note that any Wishart measure generates the exponential family to which it belongs, which is one more reason to speak about
	 {\it Wishart exponential families.}	}\\
	 	 
	{\bf Acknowledgement.} We thank Ma\l gorzata Bogdan and H\'el\`ene Massam for discussions on statistical applications of Wishart laws on homogeneous cones.

{This research benefited from the support of the French government ``Investissements d'Avenir'' program ANR-11-LABX-0020-01 and PDLL Regional grant D\'efiMaths.
H. Ishi was partially supported by JSPS KAKENHI Grant Number 16K05174 and JST PRESTO.
B. Ko{\l}odziejek was partially supported by NCN Grant No. 2012/05/B/ST1/0055.}

\section{Natural Exponential Families}\label{nef}

In the following section we will give a short introduction to natural exponential families (NEFs). The standard reference book on exponential families is \cite{BN14}.

Let $\E$ be a finite dimensional real linear space endowed with an inner product $\scalar{\cdot,\cdot}$ and let $\E^\ast$ be the dual space of $\E$. If $\xi\in\E^\ast$ is a linear functional on $\E$, we will denote its action on $x\in\E$ by $\scalar{\xi,x}$. Let $L_S(\E^\ast,\E)$ be the linear space of linear operators $A\colon \E^\ast\to\E$ such that
 for any $\xi, \eta\in\E^\ast$, one has $\scalar{{\xi},A({\eta})}=\scalar{{\eta},A({\xi})}$. 

Let $\mu$ be a positive Radon measure on $\E$. We define its Laplace transform $L_\mu\colon \E^\ast\to(0,\infty]$ by
$$L_\mu(\theta):=\int_{\E} e^{-\scalar{\theta,x}}\mu(dx).$$
Let $\Theta(\mu)$ denote the interior of the set $\{\theta\in \E^\ast\colon L_\mu(\theta)<\infty\}$. H\"older's inequality implies that the set $\Theta(\mu)$ is convex and the cumulant function
$$k_\mu(\theta):=\log L_\mu(\theta)$$
is convex on $\Theta(\mu)$ and it is strictly convex if and only if $\mu$ is not concentrated on {any} affine hyperplane of $\E$.
Let $\mathcal{M}(\E)$ be the set of positive Radon measures on $\E$ such that $\Theta(\mu)$ is not empty and $\mu$ is not concentrated on {any} affine hyperplane of $\E$. 

For $\mu\in\mathcal{M}(\E)$ we define the \emph{natural exponential family (NEF) generated by $\mu$} as the set of probability measures 
$$F(\mu)=\{P(\theta,\mu)(dx)=e^{-\scalar{\theta,x}-k_\mu(\theta)}\mu(dx)\colon \theta\in\Theta(\mu)\}.$$
Then, for $\theta\in \Theta(\mu)$,
\begin{align*}
m_\mu(\theta)&:=-k_\mu'(\theta)=\int_{\E}x\,P(\theta,\mu)(dx),\\
-m_\mu'(\theta)&=k_\mu''(\theta)=\int_{\E} (x-m_{\mu}(\theta))\otimes (x-m_\mu(\theta))\,P(\theta,\mu)(dx),
\end{align*}
are respectively the mean and the covariance operator of the measure $P(\theta,\mu)$. 
Here $x\otimes x$ is {an element of $L_S(\E^\ast, \E)$}
 defined by $(x\otimes x)({\xi})=x\scalar{{\xi},x}$ for $x\in \E$ and ${\xi}\in\E^\ast$.
The subset $M_{F(\mu)}:=m_\mu(\Theta(\mu))$ of $\E$ is called the domain of means of $F(\mu)$. The map $m_\mu\colon \Theta(\mu)\to M_{F(\mu)}$ is an analytic diffeomorphism, and its inverse is denoted by $\psi_\mu\colon M_{F(\mu)}\to \Theta(\mu)$.
\begin{lemma}[\mbox{\cite[Proposition IV.4]{Is14}}] \label{lem:Jmu}
Define $J_{\mu}(m) := \sup_{\theta \in \Theta(\mu)} \frac{e^{- \scalar{\theta, m}}}{L_{\mu}(\theta)}$ for any $m \in M_{F(\mu)}$.
Then $\psi_{\mu} = -(\log J_{\mu})'$. 
\end{lemma}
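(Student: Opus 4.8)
\emph{Proof proposal.} The plan is to recognise the supremum defining $J_{\mu}$ as the value of a strictly concave function at its unique interior maximiser, to read off from this a closed formula for $\log J_{\mu}$ (which is, up to the sign change $m\mapsto -m$, the Legendre--Fenchel conjugate of the cumulant function $k_{\mu}$), and finally to differentiate that formula; the identity $\psi_{\mu}=-(\log J_{\mu})'$ then emerges as the classical reciprocity between the gradients of a convex function and its conjugate.

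First I would rewrite the quantity under the supremum. Since $L_{\mu}(\theta)=e^{k_{\mu}(\theta)}$, we have $e^{-\scalar{\theta,m}}/L_{\mu}(\theta)=e^{g_{m}(\theta)}$ with $g_{m}(\theta):=-\scalar{\theta,m}-k_{\mu}(\theta)$, so that $\log J_{\mu}(m)=\sup_{\theta\in\Theta(\mu)}g_{m}(\theta)$. As $\mu\in\mathcal{M}(\E)$, the function $k_{\mu}$ is strictly convex on the open convex set $\Theta(\mu)$, hence $g_{m}$ is strictly concave there, with differential $g_{m}'(\theta)=-m-k_{\mu}'(\theta)=m_{\mu}(\theta)-m$. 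For $m\in M_{F(\mu)}=m_{\mu}(\Theta(\mu))$ the equation $m_{\mu}(\theta)=m$ has the unique solution $\theta=\psi_{\mu}(m)$; this is therefore the unique critical point of $g_{m}$, and since a critical point of a concave function on an open convex set is automatically a global maximum, the supremum is attained (and finite) there. Consequently
\[
  \log J_{\mu}(m)=g_{m}(\psi_{\mu}(m))=-\scalar{\psi_{\mu}(m),m}-k_{\mu}(\psi_{\mu}(m)),\qquad m\in M_{F(\mu)}.
\]

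Next I would differentiate this expression on the set $M_{F(\mu)}$, which is open and on which $\psi_{\mu}$ is a real-analytic (in particular $C^{1}$) diffeomorphism, while $k_{\mu}$ is smooth on $\Theta(\mu)$. Differentiating in a direction $h\in\E$ and writing $D_{h}\psi_{\mu}(m)\in\E^{\ast}$ for the corresponding directional derivative, the chain and product rules give
\[
  D_{h}(\log J_{\mu})(m)=-\scalar{D_{h}\psi_{\mu}(m),m}-\scalar{\psi_{\mu}(m),h}-\scalar{D_{h}\psi_{\mu}(m),k_{\mu}'(\psi_{\mu}(m))}.
\]
Since $k_{\mu}'(\psi_{\mu}(m))=-m_{\mu}(\psi_{\mu}(m))=-m$, the first and third terms cancel, leaving $D_{h}(\log J_{\mu})(m)=-\scalar{\psi_{\mu}(m),h}$. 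As $h\in\E$ is arbitrary, $(\log J_{\mu})'(m)=-\psi_{\mu}(m)$, that is, $\psi_{\mu}=-(\log J_{\mu})'$.

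The only step that is not purely mechanical is the first one: one has to be sure the supremum is genuinely attained inside $\Theta(\mu)$ rather than escaping to its boundary. This is exactly where the hypothesis $m\in M_{F(\mu)}$ is used --- it supplies a critical point of $g_{m}$ in the interior --- while strict concavity of $g_{m}$ upgrades that critical point to the (unique) global maximiser. Once the closed-form expression for $\log J_{\mu}$ is available, the rest is a one-line differentiation.
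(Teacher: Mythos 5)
Your proof is correct and takes essentially the same route as the paper: both rest on recognising $\log J_{\mu}$ (up to the sign change in the argument) as the Legendre--Fenchel conjugate of $k_{\mu}$ and reading off the gradient reciprocity. The only difference is that the paper cites Fenchel duality for that last step, while you re-derive the needed instance of it by locating the maximiser at $\theta=\psi_{\mu}(m)$ and differentiating the resulting closed form, which is just the standard proof of the duality spelled out.
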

\begin{proof}
Since $\log J_{\mu}(-m) = \sup_{\theta \in \Theta(\mu)} (\scalar{\theta, m} - k_{\mu}(\theta))$ is the Legendre-Fenchel transform of $k_{\mu}(\theta)$,
 the statement follows from the Fenchel duality.
\end{proof}
For any $m\in M_{F(\mu)}$ consider the covariance operator $\V_{F(\mu)}(m)$ of the measure $P(\psi_\mu(m),\mu)$. Then
\begin{align}\label{defV}
	\V_{F(\mu)}(m)=k_\mu''(\psi_\mu(m))=-[\psi_\mu'(m)]^{-1}.
\end{align}
The map $\V_{F(\mu)}\colon M_{F(\mu)}\to L_S(\E^\ast,\E)$ is called \emph{the variance function} of $F(\mu)$. {The} variance function is {a} central object of interest of natural exponential families, because it characterizes a NEF and the generating measure in the following way: if $F(\mu)$ and $F(\mu_0)$ are two natural exponential families such that $\V_{F(\mu)}$ and $\V_{F(\mu_0)}$ coincide on a non-void open set $J\subset M_{F(\mu)}\cap M_{F(\mu_0)}$, then $F(\mu) = F(\mu_0)$ and so $\mu_0(dx)=\exp\{\scalar{a,x}+b\}\mu(dx)$ for some $a\in\E^\ast$ and $b\in\R$. The variance function gives full knowledge of the NEF.

In the context of natural exponential families, often invariance properties {under} the action of a subgroup of general linear group or general affine group are considered. For the recent developments in this direction see \cite{IK15}.

Usually when one defines natural exponential family one starts with the moment generating function $L_\mu(\theta)=\int_{\E} \exp\scalar{\theta,x}\mu(dx)$. In such case, introducing the same concept of {inverse of the }mean map as above, the covariance operator has the form $\V_{F(\mu)}(m)=[\psi_\mu'(m)]^{-1}$. For our purposes however we find it more convenient to define NEF through the Laplace transform.%
\section{Basic facts on homogeneous cones}\label{HCones}
Let $\mathrm{Mat}(n,m;\R)$, $\mathrm{Sym}(n,\R)$  denote the linear spaces of real $n\times m$ matrices and symmetric real $n\times n$ matrices,  respectively.
Let 
 $\mathrm{Sym}_+(n,\R)$ be the cone of symmetric positive definite real $n\times n$ matrices.
 $A^\top$ denotes the transpose of a matrix $A$. 
 For $X\in \mathrm{Mat}(N,N;\R)$ define a linear operator
 $\rho(X)\colon \mathrm{Sym}(N,\R)\to \mathrm{Sym}(N,\R)$ by 
 $$\rho(X)Y=X Y X^\top,\qquad Y\in\mathrm{Sym}(N,\R).$$

Let $V$ be a real linear space and $\Omega$ a regular open
convex cone in $V$. Open convex cone $\Omega$ is \emph{regular} if
$\overline{\Omega}\cap(-\overline{\Omega})=\{0\}$. The 
linear automorphism group preserving the cone
is denoted by $\mathrm{G}(\Omega)=\{g\in \GL(V)\colon g\,\Omega
=\Omega\}$. The cone $\Omega$ is said to be \emph{homogeneous} 
if $\mathrm{G}(\Omega)$ acts transitively on $\Omega$.

\subsection{Homogeneous cones  $\CV$ and $\CVa$}\label{Matrix}
We recall from \cite{GI14} a useful realization of any homogeneous cone. Let us take a partition $N=n_1+\ldots+n_r$ of a positive integer $N$, and consider a system of vector spaces $\VV_{lk}\subset\mathrm{Mat}(n_l,n_k;\R)$, \mbox{$1\leq k< l\leq r$}, satisfying {the} following three conditions:
\begin{itemize}
  \item[{\rm (V1)}] $A\in\VV_{lk}$, $B\in\VV_{ki}$ $\implies$ $AB\in\VV_{li}$ for any $1\leq i<k<l\leq r$,
	\item[{\rm (V2)}] $A\in\VV_{li}$, $B\in\VV_{ki}$ $\implies$ $AB^\top\in\VV_{lk}$ for any $1\leq i<k<l\leq r$,
	\item[{\rm (V3)}] $A\in\VV_{lk}$ $\implies$ $AA^\top\in\R I_{n_l}$ for any $1\leq k<l\leq r$.
\end{itemize}
Let $\ZV$ be the subspace of $\mathrm{Sym}(N,\R)$ defined by
$$\ZV:=
\begin{Bmatrix}
x=\begin{pmatrix}
X_{11} & X_{21}^\top & \cdots & X_{r1}^\top \\
X_{21} & X_{22} & & X_{r2}^\top \\
\vdots & & \ddots & \\
X_{r1} & X_{r2} & & X_{rr}
\end{pmatrix}
\colon
\begin{array}{l}
X_{lk}\in \mathcal{V}_{lk},\,\, 1\leq k< l\leq r  \\
{X_{ll}=x_{ll}I_{n_l},\,\, x_{ll} \in \R,\,\,1\leq l\leq r} 
\end{array}
\end{Bmatrix}.
$$
We set 
$$\CV:=\ZV\cap\mathrm{Sym}_+(N,\R).$$
Then $\CV$ is a regular open convex cone in the linear space $\ZV$. Let $\HV$ be the group of real lower triangular matrices with positive diagonals defined by
$$\HV:=
\begin{Bmatrix}
T=\begin{pmatrix}T_{11} & & & \\ T_{21} & T_{22} & & \\ \vdots & & \ddots & \\ T_{r1} & T_{r2} & & T_{rr} \end{pmatrix}
\colon 
\begin{array}{l}
T_{lk}\in\VV_{lk},\,\, 1\leq k< l\leq r \\
T_{ll}=t_{ll}I_{n_l},\,\,t_{ll}>0,\,\, 1\leq l\leq r
\end{array}
\end{Bmatrix}.
$$
If $T\in\HV$ and $x\in\ZV$, then $\rho(T)x=T\,x\,T^\top\in\ZV$ thanks to $\mathrm{(V1)-(V3)}$. Moreover, $\rho(\HV)$ acts on the cone $\CV$ (simply) transitively (\cite[Proposition 3.2]{Is06}), that is, $\CV$ is a homogeneous cone. Our interest in $\CV$ is motivated by the fact that any homogeneous cone is linearly isomorphic to $\CV$ due to \cite[Theorem D]{Is06}.

Condition $\mathrm{(V3)}$ allows us to define an inner product $(\cdot|\cdot)$ on $\mathcal{V}_{lk}$, \mbox{$1\leq k<l\leq r$}, by
$$(AB^\top+BA^\top)/2=(A|B)I_{n_l},\qquad A,B\in\mathcal{V}_{lk}.$$
We define \emph{the trace inner product} on $\ZV$ by
$$\scalar{x,y}=\tr(xy)=\sum_{k=1}^r n_kx_{kk}y_{kk}+2\sum_{1\leq k<l\leq r} n_l(X_{lk}|Y_{lk}),\qquad x,y\in\mathcal{Z}_\mathcal{V}.$$
Using the trace inner product we identify the dual space $\ZVa$ with $\ZV$. 
Define the dual cone $\CVa$ by
$$\CVa:=\{\xi\in \ZV\colon \scalar{\xi,x}>0\,\,\forall x\in\overline{\CV}\setminus\{0\}\},$$
where $\overline{\CV}$ is the closure of $\CV$.
The dual cone $\CVa$ is also homogeneous. It is easily seen that $I_N\in\CVa$.

For $T \in \HV$, we denote by $\rho^\ast(T)$ the adjoint operator of $\rho(T) \in \GL(\ZV)$
 defined in such a way that $\scalar{\xi,\rho(T)x}=\scalar{\rho^\ast(T)\xi,x}$ for any $\xi,x\in\ZV$.
For any $\xi\in\CVa$ there exists a unique $T\in\HV$ such that $\xi=\rho^\ast(T)I_N$ (\cite[Chapter 1, Proposition 9]{Vi63}). 
\subsection{Generalized power functions}\label{gener}
Define a one-dimensional representation $\chi_{\us}$ of the triangular group $\HV$ by
$$\chi_{\us}(T):=\prod_{k=1}^r t_{kk}^{2 s_k},$$ 
where $\us=(s_1,\ldots,s_r)\in\mathbb{C}^r$. Note that any one-dimensional representation $\chi$ of $\HV$ is of the form $\chi_{\us}$ for some $\us\in\mathbb{C}^r$. 
\begin{definition}
Let $\Delta_{\us}\colon\CV\to \mathbb{C}$ be the function given by
$$\Delta_{\us}(\rho(T)I_N):=\chi_{\us}(T),\qquad T\in \HV.$$
Let $\delta_{\us}\colon\CVa\to \mathbb{C}$ be the function given by
$$\delta_{\us}(\rho^\ast(T)I_N):=\chi_{\us}(T),\qquad T\in \HV.$$
\end{definition}
Functions $\Delta$ and $\delta$ are called \emph{generalized power functions}.

Let $N_k=n_1+\ldots+n_k$, $k=1,\ldots,r$. For $y\in\mathrm{Sym}(N,\R)$, by $y_{\{1:k\}}\in\mathrm{Sym}(N_k,\R)$ we denote the submatrix $(y_{ij})_{1\leq i,j\leq N_k}$. It is known that for any lower triangular matrix $T$ one has
$$(TT^\top)_{\{1:k\}}=T_{\{1:k\}}T_{\{1:k\}}^\top.$$
Thus, for $x=\rho(T)I_N\in\CV$ with $T\in\HV$ one has $\det x_{\{1:k\}}=(\det T_{\{1:k\}})^2=\prod_{i=1}^k t_{ii}^{2 n_i}$.
This implies that for any $x\in\CV$,
\begin{equation} \label{eqn:Delta_s} 
 \Delta_{\us}(x)=(\det x)^{\frac{s_r}{n_r}}\prod_{k=1}^{r-1}(\det x_{\{1:k\}})^{\frac{s_k}{n_k}-\frac{s_{k+1}}{n_{k+1}}}.
\end{equation}
We will express $\delta_{\us}(\xi)$ as a function of $\xi\in\CVa$ in the next Section (see Proposition \ref{proPhi}).

By definition, $\Delta_{\us}$ and $\delta_{\us}$ are multiplicative in the following sense
\begin{align}
\Delta_{\us}(\rho(T)x)&=\Delta_{\us}(\rho(T)I_N)\,\Delta_{\us}(x), \qquad (x,T)\in\CV\times\HV, \label{BigDel}\\
\delta_{\us}(\rho^\ast(T)\xi) &= \delta_{\us}(\rho^\ast(T)I_N)\,\delta_{\us}(\xi),\qquad\,\, (\xi,T)\in \CVa\times \HV. \label{SmallDel}
\end{align}

\begin{definition}\label{piDef}
Let $\pi\colon \mathrm{Sym}(N;\R)\to\ZV$ be the projection such that, for any $x\in\mathrm{Sym}(N,\R)$ the element $\pi(x)\in\ZV$ is uniquely determined by 
$$\tr(xa)=\scalar{\pi(x),a},\qquad \forall a\in\ZV.$$
\end{definition}

For any $x,y\in\ZV$ one has
$$\scalar{\rho^\ast(T)x,y}=\scalar{x,\rho(T)y}=\tr(\rho(T^\top)x\cdot y)=\scalar{\pi(\rho(T^\top)x),y},$$
thus, for any $T\in\HV$,
\begin{align}\label{rhos}
\rho^\ast(T)=\pi\circ \rho(T^\top).
\end{align}

Now we define a useful map $\xi \rightarrow \hat \xi$ between $\CVa$ and
$\mathrm{Sym}_+(N,\R)$,  such that $(\hat{\xi})^{-1}\in \CV$ and
$\pi(\hat{\xi})=\xi$. 
{An analogous map}
 is very important in statistics on decomposable graphical models \cite{Lbook}.

\begin{definition}\label{hat}
For $\xi=\rho^\ast(T)I_N\in\CVa$ with $T\in\HV$, we define 
$$\hat{\xi}:=\rho(T^\top)I_N=T^\top T\in\mathrm{Sym}_+(N,\R).$$
\end{definition}
Note that for any $\xi\in\CVa$, one has $(\hat{\xi})^{-1}\in \CV$ (compare the definition of $\hat{\xi}$ in \cite[Proposition 2.1]{LM07}). Indeed, $(\hat{\xi})^{-1}=\rho(T^{-1})I_N\in\CV$. 
Due to \eqref{rhos}, we have $\pi(\hat{\xi})=\xi$. 

Observe that for $T\in\HV$,
$$\Delta_{\us}(\rho(T)I_N)=\chi_{\us}(T)=\chi_{-\us}(T^{-1})=\delta_{-\us}(\rho^\ast(T^{-1})I_N)$$
and, due to \eqref{rhos}, $\rho^\ast(T^{-1})I_N=\pi\left((\rho(T)I_N)^{-1}\right)$.
This implies that functions $\Delta$ and $\delta$ are related by the 
following identity
\begin{equation}\label{Delta_delta}
\Delta_{\us}(x)=\delta_{-\us}(\pi(x^{-1})),\qquad x\in \CV,
\end{equation}
or equivalently,
\begin{equation} \label{eqn:Delta_delta2}
\Delta_{\us}(\hat{\xi}^{-1})=\delta_{-\us}(\xi),\qquad \xi\in\CVa.
\end{equation}
In literature, function $\delta_{\us}$ is sometimes denoted by $\Delta_{\us^\ast}^\ast$, where $\us^\ast=(s_r,\ldots,s_1)$.

\subsection{Basic quadratic maps {$q_i$} and associated maps $\phi_i$}\label{QUADR}
We recall from \cite{GI14} {a} construction of basic quadratic maps.
Let $W_i$, $i=1,\ldots,r$, be the subspace of $\mathrm{Mat}(N,n_i;\R)$ consisting of {the} matrices $x$ of the form
$$x=\begin{pmatrix}
0_{n_1+\ldots+n_{i-1},n_i} \\ x_{ii}I_{n_i} \\ \vdots \\X_{ri}
\end{pmatrix},$$
where $X_{li}\in\VV_{li}$, $l=i+1,\ldots, r$.
{For $x \in W_i$, the symmetric matrix $x x^\top$ belongs to $\ZV$ thanks to (V2) and (V3).}
We define \emph{the basic quadratic map} $q_i\colon W_i\ni x\mapsto x x^\top\in \ZV$.

Taking an orthonormal basis of each $\VV_{li}$ with respect to $(\cdot|\cdot)$, we identify the space 
$W_i$ with $\R^{m_i}$, where $m_i=\dim W_i=1+\dim\VV_{{i+1,i}}+\ldots+ \dim\VV_{ri}$. {Let $\vc(x)\in\R^{m_i}$ denote the vectorization of $x\in W_i$.}
It is convenient to choose a basis for $W_i$ consistent with the block decomposition of $\ZV$, that is,
$(v_1,\ldots,v_{m_i})$, where $v_1$ corresponds to $\VV_{ii}\simeq \R$ and $(v_2,\ldots,v_{1+\dim \VV_{{i+1,i}}})$ corresponds to $\VV_{i+1,i}$ and so on.\\
\begin{definition} \label{defi:phi_i}
For the quadratic map $q_i$ we define the associated linear map $\phi_i\colon \ZV\equiv\ZVa\to \mathrm{Sym}(m_i,\R)$ in such a way that for $\xi\in\ZV$,
$$\vc(x)^\top \phi_i(\xi)\vc(x)=\scalar{\xi,q_i(x)},\qquad \forall\,x\in W_i.$$
\end{definition}

Similarly we consider another subspace of $\mathrm{Mat}(N,n_i;\R)$, namely,
$$\widecheck{W}_i=\begin{Bmatrix}
x=\begin{pmatrix}0_{n_1+\ldots+n_{i},n_i} \\ X_{i+1,i} \\ \vdots \\X_{ri}\end{pmatrix}
\colon
X_{li}\in\VV_{li},\,\, l=i+1,\ldots,r
\end{Bmatrix},
$$
the quadratic map $\widecheck{q}_i\colon \widecheck{W}_i\ni x\mapsto x x^\top\in \ZV$ and its associated linear map {$\widecheck{\phi}_i\colon \ZV\equiv\ZVa\to \mathrm{Sym}(m_i-1,\R)$}.
\begin{proposition}\label{proPhi}
\begin{itemize}
\item[\rm (i)] For any $\xi\in\ZV$ and $i=1,\ldots,r-1$, one has
$$\phi_i(\xi)=
\begin{pmatrix} 
n_i\xi_{ii} & v_i(\xi)^\top\\
v_i(\xi) & \widecheck{\phi}_i(\xi)
\end{pmatrix},
$$
where $v_i(\xi):=\begin{pmatrix} n_{i+1}\vc(\xi_{i+1,i}) \\ \vdots \\ n_r\vc(\xi_{ri})\end{pmatrix}\in\R^{m_i-1}$ and $\vc(\xi_{ki})\in\R^{\dim\VV_{ki}}$ is the vectorization of $\xi_{ki}\in\VV_{ki}$.
Moreover, $\phi_r(\xi)= { n_r \xi_{rr} \in \R \equiv \mathrm{Sym}(1,\R)}$.

\item[\rm (ii)]
For $\xi=\rho^\ast(T)I_N\in\CVa$ with $T\in\HV$ and $i=1,\ldots,r-1$ one has
$$\det \phi_i(\xi)=\chi_{\underline{m}_i}(T)\det \phi_i(I_N),$$
and 
$$\det \widecheck{\phi}_i(\xi)=\chi_{\underline{\widecheck{m}}_i}(T)\det \widecheck{\phi}_i(I_N),$$
where 
\begin{align*}
\underline{m}_i:=(0,\ldots,0,1,n_{i+1,i},\ldots,n_{ri})\in\mathbb{Z}^r, \\
\underline{\widecheck{m}}_i:=(0,\ldots,0,0,n_{i+1,i},\ldots,n_{ri})\in\mathbb{Z}^r.
\end{align*}

\item[\rm (iii)] 
For any $\xi\in\CVa$, one has
\begin{equation}\label{New_delta}
\delta_{\us}(\xi)=C_{\us} \phi_r(\xi)^{{s_r}}\prod_{i=1}^{r-1} 
\left(\frac{\det \phi_i(\xi)}{\det \widecheck{\phi}_i(\xi)}\right)^{s_i},
\end{equation}
where the constant $C_{\us}$ does not depend on $\xi$.

\item[\rm (iv)]
For $\alpha,\beta\in\ZV$ and $i=1,\ldots,r-1$, one has
\begin{align*}
\tr \phi_i(\alpha)\phi_i(I_N)^{-1}\phi_i(\beta)&\phi_i(I_N)^{-1}-\tr\widecheck{\phi}_i(\alpha)\widecheck{\phi}_i(I_N)^{-1}\widecheck{\phi}_i(\beta)\widecheck{\phi}_i(I_N)^{-1} \\
&= \alpha_{ii}\beta_{ii}+2\sum_{l=i+1}^r \frac{n_l}{n_i}
(\alpha_{li}|\beta_{li}).
\end{align*}
\end{itemize}
\end{proposition}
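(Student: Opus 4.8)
The plan is to establish all four items by direct computation with the block structure of $\ZV$, the main inputs being: the diagonal blocks of elements of $\ZV$ are scalar multiples of the identity; $\tr(AB^\top)=n_l(A|B)$ for $A,B\in\VV_{li}$ (a reformulation of (V3) and the definition of $(\cdot|\cdot)$); and, by (V1), the map $x\mapsto Tx$ sends $W_i$ into $W_i$ and $\widecheck{W}_i$ into $\widecheck{W}_i$ whenever $T\in\HV$. For (i) I would expand $\scalar{\xi,q_i(x)}=\tr(x^\top\xi x)$ over the block decomposition of $x\in W_i$, whose $(i,i)$-block is $x_{ii}I_{n_i}$ and whose lower block-rows are $X_{li}\in\VV_{li}$ for $l>i$. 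Only block indices $\geq i$ contribute: the $(i,i)$-term gives $n_i\xi_{ii}x_{ii}^2$; the cross terms give $2x_{ii}\sum_{l>i}\tr(X_{li}^\top\xi_{li})=2x_{ii}\sum_{l>i}n_l(\xi_{li}|X_{li})=2x_{ii}\,v_i(\xi)^\top\vc(\widecheck{x})$ (using orthonormality of the chosen basis of each $\VV_{li}$); and the remaining terms assemble into $\scalar{\xi,\widecheck{q}_i(\widecheck{x})}=\vc(\widecheck{x})^\top\widecheck{\phi}_i(\xi)\vc(\widecheck{x})$, where $\widecheck{x}\in\widecheck{W}_i$ is the lower part of $x$. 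Comparing with $\vc(x)^\top\phi_i(\xi)\vc(x)$ and invoking uniqueness of the symmetric matrix of a quadratic form yields the block formula; for $i=r$ one has $W_r=\R I_{n_r}$ and $q_r(x)=x_{rr}^2I_{n_r}$, whence $\phi_r(\xi)=n_r\xi_{rr}$.

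For (ii) and (iii) the key is a transformation rule: since $q_i(Tx)=(Tx)(Tx)^\top=\rho(T)q_i(x)$, one obtains
$$\phi_i(\rho^\ast(T)\xi)=M_i(T)^\top\phi_i(\xi)M_i(T),\qquad \widecheck{\phi}_i(\rho^\ast(T)\xi)=\widecheck{M}_i(T)^\top\widecheck{\phi}_i(\xi)\widecheck{M}_i(T),$$
where $M_i(T)$ and $\widecheck{M}_i(T)$ represent $x\mapsto Tx$ in $\vc$-coordinates. In the basis adapted to the block decomposition these matrices are block lower triangular with diagonal block $t_{ll}\,\mathrm{id}$ on the $\VV_{li}$-component (plus $t_{ii}$ on the $\VV_{ii}$-line, present only in $M_i$), so $(\det M_i(T))^2=\chi_{\underline{m}_i}(T)$ and $(\det\widecheck{M}_i(T))^2=\chi_{\underline{\widecheck{m}}_i}(T)$; taking determinants and putting $\xi=I_N$ gives (ii). For (iii), let $f(\xi)$ denote the right-hand side of \eqref{New_delta} with $C_{\us}$ dropped. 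Using these determinant transformation laws, $\phi_r(\rho^\ast(T)\xi)=t_{rr}^2\phi_r(\xi)$, and $\underline{m}_i-\underline{\widecheck{m}}_i=(0,\dots,1,\dots,0)$, one finds $f(\rho^\ast(T)\xi)=\chi_{\us}(T)f(\xi)$, which is precisely the multiplicativity \eqref{SmallDel} satisfied by $\delta_{\us}$. Since $\rho^\ast(\HV)$ acts transitively on $\CVa$ and $f,\delta_{\us}>0$ there (the quadratic forms $\phi_i(\xi)$ and $\widecheck{\phi}_i(\xi)$ are positive definite for $\xi\in\CVa$ because $q_i(x)\in\overline{\CV}\setminus\{0\}$ when $x\neq0$), the ratio $f/\delta_{\us}$ is a constant, and evaluating at $I_N$ identifies $C_{\us}=f(I_N)^{-1}$.

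For (iv) I would use that $(I_N)_{li}=0$ for $l>i$, so $v_i(I_N)=0$ and, by (i), $\phi_i(I_N)=\mathrm{diag}(n_i,\widecheck{\phi}_i(I_N))$. Writing the four factors $\phi_i(\alpha)\phi_i(I_N)^{-1}\phi_i(\beta)\phi_i(I_N)^{-1}$ in $2\times2$ block form and taking the trace, the purely lower block produces exactly $\tr\widecheck{\phi}_i(\alpha)\widecheck{\phi}_i(I_N)^{-1}\widecheck{\phi}_i(\beta)\widecheck{\phi}_i(I_N)^{-1}$, which cancels the second term on the left, leaving $\alpha_{ii}\beta_{ii}+\tfrac{2}{n_i}v_i(\alpha)^\top\widecheck{\phi}_i(I_N)^{-1}v_i(\beta)$. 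Finally $\vc(\widecheck{x})^\top\widecheck{\phi}_i(I_N)\vc(\widecheck{x})=\tr(\widecheck{x}\widecheck{x}^\top)=\sum_{l>i}n_l\|\vc(X_{li})\|^2$ shows that $\widecheck{\phi}_i(I_N)$ is block diagonal with blocks $n_l\,\mathrm{id}$, so $v_i(\alpha)^\top\widecheck{\phi}_i(I_N)^{-1}v_i(\beta)=\sum_{l>i}n_l(\alpha_{li}|\beta_{li})$ and the identity follows. The computations are routine; the one delicate point is the transformation rule underlying (ii)--(iii) --- verifying that $x\mapsto Tx$ preserves $W_i$ and $\widecheck{W}_i$, which is where (V1) is used, and reading off the determinant of this restriction --- after which part (iii), the substantive statement, follows formally from the simple transitivity of $\rho^\ast(\HV)$ on $\CVa$.
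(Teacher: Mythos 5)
Your proof is correct and follows essentially the same route as the paper: direct block computation of the quadratic form $\scalar{\xi,q_i(x)}$ for (i) and of the $2\times 2$ block trace for (iv), and the congruence law $\phi_i(\rho^\ast(T)\xi)=M_i(T)^\top\phi_i(\xi)M_i(T)$ for (ii)--(iii), which is exactly the argument the paper delegates to [GI14, Prop.\ 3.3] and itself re-derives later as the $\sigma_i(T)$ relation in the proof of Proposition \ref{pro:dual_matrix_realization}. Your formulation of (iii) via equivariance of the right-hand side plus transitivity of $\rho^\ast(\HV)$ on $\CVa$ is only a mild repackaging of the paper's step of solving $t_{ii}^2=n_i^{-1}\det\phi_i(\xi)/\det\widecheck{\phi}_i(\xi)$ and substituting into $\delta_{\us}(\xi)=\prod_i t_{ii}^{2s_i}$.
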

Let us underline that the useful formula (\ref{New_delta}) for the power function $\delta_{\us}$  is new and different from {the} formula given in {\cite{GI14} and \cite{Is14}}.
Precisely, it is just mentioned in \cite{GI14, Is14} that $\delta_{\us}(\xi)$ is a product of powers of $\det \phi_i(\xi)$. 
\begin{proof}
\begin{itemize}
\item[\rm (i)] It is a consequence of the choice of basis for $W_i$ and the fact that $W_i\simeq \R\oplus \widecheck{W}_i$.

\item[\rm (ii)] In \cite{GI14} a very similar problem was considered, but there the dual space $\ZVa$ was identified with $\ZV$ using the, so-called, standard inner product, not the trace inner product. The only difference in the form of $\phi_i$ in these two cases is that here block sizes $n_i$ appear in $(i,i)$ component and in the definition of $v_i$. The proof is virtually the same for both cases - see \cite[Proposition 3.3]{GI14}. 

\item[\rm (iii)] From \rm{(ii)} we see that if $\xi=\rho^\ast(T)I_N$, then 
$$t_{ii}^2=\frac{\chi_{\underline{m}_i}(T)}{\chi_{\underline{\widecheck{m}}_i}(T)}=\frac{\det \widecheck{\phi}_i(I_N)}{\det \phi_i(I_N)}\frac{\det \phi_i(\xi)}{\det \widecheck{\phi}_i(\xi)}=n_i^{-1}\frac{\det \phi_i(\xi)}{\det \widecheck{\phi}_i(\xi)}.$$

\item[\rm (iv)] Using the block decomposition given in \rm{(i)}, one has
\begin{multline*}
\tr \phi_i(\alpha)\phi_i(I_N)^{-1}\phi_i(\beta)\phi_i(I_N)^{-1}-\tr\widecheck{\phi}_i(\alpha)\widecheck{\phi}_i(I_N)^{-1}\widecheck{\phi}_i(\beta)\widecheck{\phi}_i(I_N)^{-1} \\
 = \alpha_{ii}\beta_{ii}+
 { n_i^{-1} \tr \{ v_i(\alpha)^\top \widecheck{\phi}_i(I_N)^{-1} v_i(\beta)
           + v_i(\beta)^\top \widecheck{\phi}_i(I_N)^{-1} v_i(\alpha) \} }
\end{multline*}
and the assertion follows from the definition of $(\cdot|\cdot)$ and $v_i$.
\end{itemize}
\end{proof}

\section{Riesz measures and Wishart exponential families}\label{Riesz}
Generalized power functions play a very important role and this is due to the following
\begin{theorem}[\citep{Gi75, Is00}]\label{riesz}
\begin{itemize}
\item[\rm (i)]
	There exists a positive measure $\RV$ on $\ZV$ with the Laplace transform 
$$L_{\RV}(\xi)=\delta_{-\us}(\xi),\qquad \xi\in\CVa$$ 
if and only if $\us\in \Xi:=\bigsqcup_{\underline{\varepsilon}\in\{0,1\}^r} \Xi(\underline{\varepsilon})$ {(disjoint union)},
	where
	$$\Xi(\underline{\varepsilon}):=\begin{Bmatrix}
 & s_k>\frac12\sum_{i<k} \varepsilon_i\dim\VV_{ki}\mbox{ if } \varepsilon_k=1 \\
\underline{s}\in\R^r; & \\
	& s_k=\frac12\sum_{i<k} \varepsilon_i\dim\VV_{ki}\mbox{ if } \varepsilon_k=0
	\end{Bmatrix}.$$
The support of $\RV$ is contained in $\overline{\CV}$.
\item[\rm (ii)]
	There exists a positive measure $\RVa$ on $\ZVa\equiv\ZV$ with the Laplace transform
\begin{equation} \label{eqn:LT-Rs}
 L_{\RVa}(\theta)=\Delta_{-\us}(\theta),\qquad \theta\in\CV
\end{equation}
if and only if $\us\in \mathfrak{X}:={\bigsqcup}_{\underline{\varepsilon}\in\{0,1\}^r} \mathfrak{X}(\underline{\varepsilon})$,
	where
	$$\mathfrak{X}(\underline{\varepsilon}):=\begin{Bmatrix}
 & s_k>\frac12 \sum_{l>k}\epsilon_l\dim\VV_{lk}\mbox{ if } \varepsilon_k=1 \\
\us\in\R^r; & \\
	& s_k=\frac12 \sum_{l>k}\epsilon_l\dim\VV_{lk}\mbox{ if } \varepsilon_k=0
	\end{Bmatrix}.$$
The support of $\RVa$ is contained in $\overline{\CVa}$.
\end{itemize}
\end{theorem}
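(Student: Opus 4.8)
\emph{Strategy.} The statement is the transcription into the matrix realization $\CV$, equipped with the trace inner product, of the Gindikin--Wallach type characterization of admissible parameters for Riesz measures, due to Gindikin \cite{Gi75} and, for general homogeneous cones, to Ishi \cite{Is00}; accordingly the plan is to follow that circle of ideas, using only the material of Sections \ref{Matrix}--\ref{gener}. Parts (i) and (ii) are proved by the same argument with the roles of the first and the last block interchanged --- this is visible already in the fact that $\Xi(\underline\varepsilon)$ pins the threshold of $s_k$ through the \emph{earlier} active blocks while $\mathfrak X(\underline\varepsilon)$ pins it through the \emph{later} ones --- so I would carry out (i) and treat (ii) analogously.

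\emph{Sufficiency on the open stratum $\underline\varepsilon=(1,\dots,1)$.} Let $\underline p=(p_1,\dots,p_r)$ be defined by $\chi_{\underline p}(T)=\det\bigl(\rho(T)|_{\ZV}\bigr)$, so that $2p_k=2+\sum_{l>k}\dim\VV_{lk}+\sum_{i<k}\dim\VV_{ki}$, and set $\RV(dx):=c_{\us}\,\Delta_{\us-\underline p}(x)\,\mathbf 1_{\CV}(x)\,dx$. Using $\Delta_{\us-\underline p}(\rho(T)x)=\chi_{\us-\underline p}(T)\Delta_{\us-\underline p}(x)$ from \eqref{BigDel} together with the fact that the Jacobian of $\rho(T)$ on $\ZV$ is $\chi_{\underline p}(T)$, one checks that $\RV$ is relatively invariant under $\rho(\HV)$ with multiplier $\chi_{-\us}$, i.e. $\rho(T)_\ast\RV=\chi_{-\us}(T)\,\RV$. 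Consequently, for $\xi=\rho^\ast(T)I_N\in\CVa$,
$$L_{\RV}(\xi)=\int e^{-\scalar{I_N,\rho(T)x}}\RV(dx)=\chi_{-\us}(T)\,L_{\RV}(I_N)=\delta_{-\us}(\xi)\,L_{\RV}(I_N).$$
It remains to evaluate $L_{\RV}(I_N)=c_{\us}\int_{\CV}e^{-\tr x}\Delta_{\us-\underline p}(x)\,dx$ by the substitution $x=\rho(T)I_N=TT^\top$, $T\in\HV$: since $\tr(TT^\top)=\sum_k n_kt_{kk}^2+\sum_{k<l}n_l|T_{lk}|^2$, $\Delta_{\us-\underline p}(TT^\top)=\prod_k t_{kk}^{2(s_k-p_k)}$, and the Jacobian of $T\mapsto TT^\top$ is $2^r\prod_k t_{kk}^{1+\sum_{l>k}\dim\VV_{lk}}$, the integral factors into Gaussian integrals over the $\VV_{lk}$ and one-variable integrals $\int_0^\infty t_{kk}^{\,2(s_k-p_k)+1+\sum_{l>k}\dim\VV_{lk}}e^{-n_kt_{kk}^2}\,dt_{kk}$; the latter converge precisely when $2(s_k-p_k)+1+\sum_{l>k}\dim\VV_{lk}>-1$, which by the choice of $\underline p$ is exactly $s_k>\tfrac12\sum_{i<k}\dim\VV_{ki}$, i.e. $\us\in\Xi(\underline\varepsilon)$. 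Normalizing $c_{\us}$ so that $L_{\RV}(I_N)=1$ gives $L_{\RV}=\delta_{-\us}$ on $\CVa$, and $\operatorname{supp}\RV\subset\overline{\CV}$ by construction.

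\emph{General strata and necessity.} For $\underline\varepsilon\neq(1,\dots,1)$ I would obtain $\RV$ by analytic continuation in $\us$: the family $\us\mapsto\bigl(\prod_k\Gamma(\cdot)\bigr)^{-1}\RV$, initially defined on $\Xi((1,\dots,1))$, extends holomorphically, the poles of the normalizing $\Gamma$-factors being cancelled by the contraction of the support towards $\partial\CV$, exactly as in the classical Riesz continuation; on $\Xi(\underline\varepsilon)$ the continued distribution is the push-forward, along a suitable composition of the basic quadratic maps $q_i,\widecheck q_i$, of the open-stratum Riesz measure of the homogeneous subcone attached to $J=\{k:\varepsilon_k=1\}$ (the subsystem $\{\VV_{lk}\}_{k,l\in J}$ again satisfies (V1)--(V3)), hence a positive measure supported in $\overline{\CV}$. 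Equivalently one can run an induction on $r$ by peeling off the first block: writing $\xi=\begin{pmatrix}\xi_{11}I_{n_1}&\ast\\ \ast&\eta\end{pmatrix}$ and using Proposition \ref{proPhi}(i),(iii) and \eqref{New_delta} one gets $\delta_{-\us}(\xi)=c\,\delta'_{-\us'}(\eta)\,\bigl(\det\phi_1(\xi)/\det\widecheck\phi_1(\xi)\bigr)^{-s_1}$ with $\us'=(s_2,\dots,s_r)$ and $\delta'$ the power function of the dual of the smaller cone, so that $\RV$ is a skew product of the Riesz measure for $\us'$ with a one-step kernel which, for $\varepsilon_1=1$, is a $\Gamma$-distribution in the Schur-reduced diagonal variable together with a Gaussian in the $\VV_{l1}$-directions, and which, for $\varepsilon_1=0$, degenerates onto $\{x_{11}=0\}$ --- this forces $s_1=0$ --- where it is governed by $\widecheck q_1$. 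For necessity one shows that for $\us\notin\Xi$ the continued distribution is not positive; this is Gindikin's argument: restricting $\delta_{-\us}$ to a half-line $t\mapsto\xi+t\,q_i(v)$ lying in $\CVa$ turns it, by multiplicativity and Proposition \ref{proPhi}, into a power of a polynomial in $t$ whose complete monotonicity --- necessary if it is the Laplace transform of a positive measure --- fails unless the exponents obey the inequalities defining $\Xi$, with equality whenever the corresponding quadratic-map direction is degenerate.

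\emph{Main obstacle.} The delicate point is precisely the non-open strata: deciding for which $\us$ positivity survives the analytic continuation, and identifying the limiting measures as images of lower-dimensional Riesz measures under the basic quadratic maps. The matrix realization, the maps $\phi_i,\widecheck\phi_i$, and the product formula \eqref{New_delta} reduce this to a manageable induction on $r$, but the combinatorics --- verifying that the admissible set is exactly the disjoint union $\bigsqcup_{\underline\varepsilon}\Xi(\underline\varepsilon)$, with neither overlaps nor gaps between the strata --- is the heart of the matter and is where I would expect the real work to lie.
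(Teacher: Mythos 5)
First, note that the paper does not prove this theorem at all: it is quoted as a known result with the citations \cite{Gi75, Is00}, so there is no internal proof to compare yours against; the relevant benchmark is the argument in those references, which is indeed the one you are reconstructing. Your treatment of the open stratum $\underline\varepsilon=(1,\dots,1)$ is correct and is the standard argument: the density $\Delta_{\us-\underline{p}}\mathbf 1_{\CV}$ with $\chi_{\underline{p}}(T)=\det(\rho(T)|_{\ZV})$, i.e. $2p_k=2+\sum_{l>k}\dim\VV_{lk}+\sum_{i<k}\dim\VV_{ki}$, the relative invariance $\rho(T)_\ast\RV=\chi_{-\us}(T)\RV$, the reduction of $L_{\RV}(\xi)$ to $\delta_{-\us}(\xi)L_{\RV}(I_N)$ via $\xi=\rho^\ast(T)I_N$, and the Jacobian $2^r\prod_k t_{kk}^{1+\sum_{l>k}\dim\VV_{lk}}$ of $T\mapsto TT^\top$ leading to the threshold $s_k>\tfrac12\sum_{i<k}\dim\VV_{ki}$ are all right.

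The genuine gap is everything outside that stratum, which is precisely where the content of the theorem lies. Your handling of the degenerate strata and of necessity is a program, not a proof: the claim that the $\Gamma$-normalized family continues holomorphically with the poles ``cancelled by the contraction of the support'', the identification of the continued distribution on $\Xi(\underline\varepsilon)$ as a positive measure obtained as a push-forward under compositions of the maps $q_i,\widecheck q_i$ of the Riesz measure of the subcone attached to $J=\{k:\varepsilon_k=1\}$, and the assertion that positivity fails for every $\us\notin\Xi$ via loss of complete monotonicity of $\delta_{-\us}$ along half-lines $\xi+t\,q_i(v)$ are each nontrivial statements that you do not establish; in particular the necessity direction cannot be dispatched by testing on one-parameter families without a careful induction on the block structure (this is the bulk of \cite{Is00}, and of Gindikin's analysis in the symmetric case). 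You acknowledge this yourself in the ``main obstacle'' paragraph, so the proposal should be read as a correct construction for $\us$ in the open Gindikin--Wallach stratum together with an accurate roadmap, rather than a proof of the stated equivalence $\us\in\Xi=\bigsqcup_{\underline\varepsilon}\Xi(\underline\varepsilon)$; for the theorem as stated one must either carry out that induction in full or, as the paper does, invoke \cite{Gi75, Is00}.
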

The measure $\RV$ (resp. $\RVa$) is called \emph{the Riesz measure on the cone $\CV$} (resp. $\CVa$). {The sets} $\Xi$ and $\mathfrak{X}$ are called \emph{the Gindikin-Wallach sets}.

 Riesz measures were described explicitly in \cite{Is00}. {The measure} $\RV$ (resp. $\RVa$) is singular unless $\us\in \Xi(1,\ldots,1)$ (resp. $\us\in \mathfrak{X}(1,\ldots,1)$). If $\us\in\Xi(1,\ldots,1)$ (resp. $\us\in\mathfrak{X}(1,\ldots,1)$), then the Riesz measure is an absolutely continuous measure with respect to the Lebesgue measure. In such case, the support of $\RV$ (resp. $\RVa$) equals $\overline{\CV}$ (resp. $\overline{\CVa}$).

We are interested in the description of natural exponential families generated by $\RV$ and $\RVa$. 
Members of $F(\RV)$ and $F(\RVa)$ are called \emph{Wishart distributions on $\CV$} and $\CVa$, respectively. In order to define NEFs generated by $\RV$ and $\RVa$ we have to ensure that  $\RV\in\mathcal{M}(\ZV)$ and $\RVa\in\mathcal{M}(\ZVa)$ at least for some $\us$. We have the following

\begin{theorem}[\mbox{\cite[Theorem 3.4]{IK15}}]\label{hyper}
\begin{itemize}
\item[\rm (i)]
Let $\us\in\Xi$.
The support of $\RV$ is not {concentrated} on any affine hyperplane in $\ZV$ if and only if $s_k>0$ for all $k=1,\ldots,r$.
\item[\rm (ii)]
Let $\us\in\mathfrak{X}$.
The support of $\RVa$ is not {concentrated} on any affine hyperplane in $\ZVa$ if and only if $s_k>0$ for all $k=1,\ldots,r$.
\end{itemize}
\end{theorem}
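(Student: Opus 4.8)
\emph{Strategy.} The plan is to use the criterion recalled in Section~\ref{nef} --- that a measure is not concentrated on an affine hyperplane exactly when its cumulant function is strictly convex --- and to reduce this strict convexity to a single positive‑definiteness statement at the base point $I_N$. I carry out (i) in detail; (ii) is parallel. By Theorem~\ref{riesz}(i), $L_{\RV}=\delta_{-\us}=\delta_{\us}^{-1}$ on $\CVa$, and by \eqref{New_delta} the function $\delta_{\us}$ is positive and real‑analytic there: it is a product of positive powers of the affine functional $\phi_r(\xi)=n_r\xi_{rr}$ and of the rational functions $\det\phi_i(\xi)/\det\widecheck{\phi}_i(\xi)$, all positive on the connected set $\CVa$ by Proposition~\ref{proPhi}(ii) together with $\phi_i(I_N)\succ0$ (since $\text{vec}(x)^\top\phi_i(I_N)\text{vec}(x)=\scalar{I_N,q_i(x)}=\tr(xx^\top)>0$ for $x\neq0$). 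Hence $k_{\RV}=-\log\delta_{\us}$ on $\CVa$. From \eqref{SmallDel}, $k_{\RV}\circ\rho^\ast(T)=k_{\RV}+\mathrm{const}$ for every $T\in\HV$; since $\rho^\ast(T)\in\GL(\ZV)$ and $\rho^\ast(\HV)I_N=\CVa$, the Hessian of $k_{\RV}$ at any $\xi\in\CVa$ is linearly congruent to $\nabla^2 k_{\RV}(I_N)$, so it is positive definite everywhere on $\CVa$ if and only if $\nabla^2 k_{\RV}(I_N)\succ0$, and in that case $k_{\RV}$ is strictly convex on $\CVa$.

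Next I would compute $\nabla^2 k_{\RV}(I_N)$ by differentiating $\log\delta_{\us}$ twice along $\xi=I_N+t\eta$, $\eta\in\ZV$. Since $\phi_i,\widecheck{\phi}_i$ are linear and $\phi_i(I_N),\widecheck{\phi}_i(I_N)\succ0$, the identity $\tfrac{d^2}{dt^2}\log\det(A+tB)\big|_{t=0}=-\tr\bigl((A^{-1}B)^2\bigr)$ evaluates the second variations of $\log\det\phi_i$ and $\log\det\widecheck{\phi}_i$, and Proposition~\ref{proPhi}(iv) with $\alpha=\beta=\eta$ collapses their difference to $-\bigl(\eta_{ii}^2+2\sum_{l>i}\tfrac{n_l}{n_i}(\eta_{li}|\eta_{li})\bigr)$; the $\phi_r$‑term contributes $-\eta_{rr}^2$. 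Weighting by the exponents $s_i$ of \eqref{New_delta} yields
$$\nabla^2 k_{\RV}(I_N)[\eta,\eta]=\sum_{i=1}^{r}s_i\,\eta_{ii}^{2}+2\sum_{1\leq i<l\leq r}\frac{n_l}{n_i}\,s_i\,(\eta_{li}\mid\eta_{li}),\qquad \eta\in\ZV,$$
i.e.\ a sum of squared norms of the independent coordinates $\eta_{ii}$ and $\eta_{li}\in\VV_{li}$ with coefficients $s_i$ and $2(n_l/n_i)s_i$; it is therefore positive definite precisely when $s_i>0$ for every $i$. Thus, if all $s_i>0$, then $k_{\RV}$ is strictly convex on $\CVa$; and were $\RV$ concentrated on $\{x\in\ZV:\scalar{a,x}=c\}$ with $a\neq0$, then $L_{\RV}(\xi+ta)=e^{-tc}L_{\RV}(\xi)$ for $\xi\in\CVa$ and small $t$, so $t\mapsto k_{\RV}(\xi+ta)$ would be affine --- a contradiction. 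This proves the ``if'' part of (i).

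For the converse, assume $s_{k_0}=0$ for some $k_0$; I would show that $\delta_{\us}$, hence $L_{\RV}$, is independent of the coordinate $x\mapsto\xi_{k_0 k_0}$. In \eqref{New_delta} the $k_0$‑th factor is absent; by Proposition~\ref{proPhi}(i) each remaining factor equals a Schur complement $n_i\xi_{ii}-v_i(\xi)^\top\widecheck{\phi}_i(\xi)^{-1}v_i(\xi)$ (or $n_r\xi_{rr}$ when $i=r$), and $\xi_{k_0 k_0}$ can enter such a factor with $i\neq k_0$ only through $\widecheck{\phi}_i(\xi)$, and then only via the block $\xi_{k_0 i}\in\VV_{k_0 i}$ with $i<k_0$. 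But $\VV_{k_0 i}=\{0\}$ whenever $s_i>0$: from $\us\in\Xi(\underline{\varepsilon})$, $s_{k_0}=0$ forces $\varepsilon_{k_0}=0$ and $\sum_{j<k_0}\varepsilon_j\dim\VV_{k_0 j}=0$; if $\varepsilon_i=1$ this gives $\VV_{k_0 i}=\{0\}$ at once, while if $\varepsilon_i=0$ then $s_i>0$ forces some $j<i$ with $\varepsilon_j=1$ and $\VV_{ij}\neq\{0\}$, whence $\VV_{k_0 j}=\{0\}$, and (V1) combined with the surjectivity of nonzero elements of the spaces $\VV_{\bullet\bullet}$ (a consequence of (V3)) turns $\VV_{k_0 i}\neq\{0\}$, $\VV_{ij}\neq\{0\}$ into $\VV_{k_0 j}\neq\{0\}$, a contradiction; the factors with $s_i=0$ are themselves absent. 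Consequently $L_{\RV}(\xi+tE_{k_0 k_0})=L_{\RV}(\xi)$ for $\xi\in\CVa$ and small $t$, where $E_{k_0 k_0}\in\ZV$ has $I_{n_{k_0}}$ in the $(k_0,k_0)$‑block; differentiating at $t=0$ gives $\int\scalar{E_{k_0 k_0},x}\,e^{-\scalar{\xi,x}}\,\RV(dx)=0$. Since $\mathrm{supp}\,\RV\subseteq\overline{\CV}\subseteq\overline{\mathrm{Sym}_+(N,\R)}$ the integrand is nonnegative, so $\scalar{E_{k_0 k_0},x}=n_{k_0}x_{k_0 k_0}=0$ for $\RV$‑a.e.\ $x$; that is, $\RV$ is concentrated on the hyperplane $\{x:x_{k_0 k_0}=0\}$. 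This is the ``only if'' part of (i).

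For (ii), the ``if'' direction is verbatim the same with $\Delta_{\us}$, \eqref{eqn:Delta_s} and $\CV$ in place of $\delta_{\us}$, \eqref{New_delta} and $\CVa$: one finds $\nabla^2 k_{\RVa}(I_N)[\eta,\eta]=\sum_i s_i\eta_{ii}^2+2\sum_{i<l}s_l(\eta_{li}\mid\eta_{li})$, again positive definite iff all $s_k>0$. The ``only if'' direction of (ii) is most easily deduced from (i): choosing a matrix realization of the dual cone $\CVa$ (which exists by \cite{Is15}), the corresponding linear isomorphism carries $\RVa$ to a Riesz measure of the type in (i) with parameter $(s_r,\dots,s_1)$, and being concentrated on an affine hyperplane is invariant under linear isomorphisms. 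I expect the main obstacle to be the structural claim in the previous paragraph --- that $s_{k_0}=0$ forces $\VV_{k_0 i}=\{0\}$ for all $i$ with $s_i>0$ --- since this is exactly where the axioms (V1)--(V3) and the fine structure of the Gindikin--Wallach set are used; the remainder of the argument is bookkeeping organized around formula \eqref{New_delta} and Proposition~\ref{proPhi}.
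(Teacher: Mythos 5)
The paper itself does not prove Theorem \ref{hyper}: it is imported verbatim from \cite[Theorem 3.4]{IK15}, so there is no internal proof to compare you with, and your argument has to be judged on its own. Judged so, it is correct and stays entirely within the toolkit of this paper. For sufficiency, your Hessian of $k_{\RV}=\log\delta_{-\us}$ at $I_N$, obtained from \eqref{New_delta} and Proposition \ref{proPhi}(iv), is exactly the computation of $\psi_{\us}'(I_N)$ carried out in the proof of Theorem \ref{main}; transporting it by the equivariance \eqref{SmallDel} and the transitivity of $\rho^\ast(\HV)$ on $\CVa$, and noting that concentration on a hyperplane would make $t\mapsto k_{\RV}(\xi+ta)$ affine through points of $\CVa$, gives the ``if'' part, and the analogous computation with \eqref{eqn:Delta_s} on $\CV$ handles (ii). For necessity, your structural claim is the crux and it is right: $s_{k_0}=0$ forces $\varepsilon_{k_0}=0$ and $\varepsilon_i\dim\VV_{k_0 i}=0$ for all $i<k_0$, and since a nonzero $B\in\VV_{ij}$ satisfies $BB^\top=|B|^2 I_{n_i}$ by (V3) (so $AB\neq 0$ whenever $A\neq0$), condition (V1) propagates this to $\VV_{k_0 i}=\{0\}$ for every $i$ with $s_i>0$; hence $\delta_{-\us}$ does not depend on $\xi_{k_0 k_0}$, and because $\mathrm{supp}\,\RV\subset\overline{\CV}$ forces $x_{k_0 k_0}\geq 0$, the vanishing derivative of $L_{\RV}$ in the direction $E_{k_0 k_0}$ pins the measure to the hyperplane $\{x_{k_0 k_0}=0\}$.

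Two points deserve an explicit sentence to make the write-up airtight. First, on $\Xi$ (resp.\ $\mathfrak{X}$) all $s_i\geq 0$, which is what legitimizes your dichotomy ``factor present iff $s_i>0$'' in \eqref{New_delta}. Second, in the necessity part of (ii) you should say that the pushforward of $\RVa$ through the matrix realization of $\CVa$ has Laplace transform $\delta^{\widetilde{\VV}}_{-\us^\ast}$ on $\mathcal{Q}_{\widetilde{\VV}}$, identify it with the Riesz measure of the family $\widetilde{\VV}$ with parameter $\us^\ast=(s_r,\ldots,s_1)$ by injectivity of the Laplace transform (the membership $\us^\ast\in\Xi_{\widetilde{\VV}}$ then comes for free from Theorem \ref{riesz} applied to $\widetilde{\VV}$), and note that a linear isomorphism maps affine hyperplanes to affine hyperplanes; this is exactly the duality mechanism of Proposition \ref{pro:mx_rn_Qn}, and it involves no circularity. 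With these small additions your proof is complete and self-contained.
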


\subsection{Group equivariance of  the Wishart exponential families}
We say that a measure $\mu$ on $\E$ is \emph{relatively invariant}
 under a subgroup $G$ of $\GL(\E)$, if for all $g\in G$ there exists a constant $c_g>0$ for which $\mu(gA)=c_g\mu(A)$ for any measurable $A\subset\E$. This condition is equivalent to
$$L_\mu(g^\ast \theta)=c_g^{-1}L_\mu(\theta),\qquad \theta\in\Theta(\mu),$$
where $g^\ast$ is the {adjoint of} $g$.

 Formulas \eqref{BigDel} and \eqref{SmallDel} imply that
 Riesz measure $\RV$ is invariant under the group $\rho(\HV)$, 
while the dual Riesz measure $\RVa$ is invariant under $\rho^\ast(\HV)$. 
It  follows  that the Wishart exponential family  ${F(\RV)}$ is 
invariant under $\rho(\HV)$  and, analogously,  ${F(\RVa)}$ is 
invariant under $\rho^\ast(\HV)$.

\section{{The inverse of the  mean map and the Lauritzen formula on  $\CVa$}} \label{SectionInvMean}

Let $\us\in\mathfrak{X} \cap \R^r_{>0}$.
Then we have $\RVa\in\mathcal{M}(\ZVa)$ by Theorem \ref{hyper}.
Denote by $\psi_{\us}:=\psi_{\RVa}$ the inverse of the mean map from $M_{F(\RVa)}$ to $\Theta(\RVa)$.
In this section, we give an explicit formula for $\psi_{\us}(m),\,\,\,m \in M_{F(\RVa)}$. 
Thanks to Theorem \ref{riesz}, we have $\CV\subset \Theta(\RVa)$ and $M_{F(\RVa)}\subset\CVa$.
Applying \cite[Proposition IV.3]{Is14}, we can show that $\CV= \Theta(\RVa)$ and $M_{F(\RVa)}=\CVa$.
Indeed, it suffices to check that, for any sequence $\{y_k\}_{k \in \mathbb{N}}$ in $\CV$ converging to a point in $\partial \CV$,
 we have $\lim_{k \to \infty}\Delta_{-\us}(y_k) = + \infty$ because $\underline{s} \in \R^r_{>0}$.

\begin{proposition}\label{ProPsi}
	The inverse of the mean map on $\CVa$ is expressed by
\begin{align} \label{psi}
\psi_{\us}(m)=-(\log\delta_{- \us})'(m).
\end{align}
\end{proposition}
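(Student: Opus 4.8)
The key is to combine the general Legendre-duality description of $\psi_\mu$ from Lemma~\ref{lem:Jmu} with the explicit relative invariance of $\RVa$ under $\rho^\ast(\HV)$ and the multiplicativity~\eqref{SmallDel} of $\delta_{\us}$. Recall that by~\eqref{eqn:LT-Rs} we have $L_{\RVa}(\theta)=\Delta_{-\us}(\theta)$ for $\theta\in\CV=\Theta(\RVa)$, so Lemma~\ref{lem:Jmu} gives $\psi_{\us}=-(\log J_{\us})'$, where
\[
J_{\us}(m)=\sup_{\theta\in\CV}\frac{e^{-\scalar{\theta,m}}}{\Delta_{-\us}(\theta)},\qquad m\in\CVa .
\]
So it suffices to identify $J_{\us}(m)$ as a constant multiple of $\delta_{-\us}(m)$; then the logarithmic derivative kills the constant and yields~\eqref{psi}.

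\textbf{Step 1: reduce to the base point by homogeneity.} Fix $m\in\CVa=M_{F(\RVa)}$. By the remark after Definition~\ref{hat} (or Vinberg's proposition), there is a unique $T\in\HV$ with $m=\rho^\ast(T)I_N$. In the supremum defining $J_{\us}(m)$ substitute $\theta=\rho(T^{-1})\theta'$, which is a bijection of $\CV$ onto itself since $\rho(\HV)$ acts on $\CV$. Then $\scalar{\theta,m}=\scalar{\rho(T^{-1})\theta',\rho^\ast(T)I_N}=\scalar{\theta',I_N}$, using $\scalar{\rho(S)x,\rho^\ast(S)\xi}=\scalar{x,\xi}$, and $\Delta_{-\us}(\theta)=\Delta_{-\us}(\rho(T^{-1})\theta')=\chi_{-\us}(T^{-1})\Delta_{-\us}(\theta')$ by~\eqref{BigDel}. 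Hence
\[
J_{\us}(m)=\chi_{-\us}(T^{-1})^{-1}\sup_{\theta'\in\CV}\frac{e^{-\scalar{\theta',I_N}}}{\Delta_{-\us}(\theta')}=\chi_{\us}(T)\cdot J_{\us}(I_N).
\]
Now $\chi_{\us}(T)=\delta_{\us}(\rho^\ast(T)I_N)=\delta_{\us}(m)=\delta_{-(-\us)}(m)$, wait—more directly, $\chi_{\us}(T)=\delta_{\us}(m)$ and we want $\delta_{-\us}$; note $\chi_{-\us}(T^{-1})=\chi_{\us}(T)$, so in fact $J_{\us}(m)=\delta_{-\us}(m)\cdot c$ once we also track signs correctly: since $\Delta_{-\us}$ scales by $\chi_{-\us}(T^{-1})$, we get $J_{\us}(m)=\chi_{\us}(T^{-1})\,J_{\us}(I_N)$; and $\chi_{\us}(T^{-1})=\chi_{-\us}(T)=\delta_{-\us}(\rho^\ast(T)I_N)=\delta_{-\us}(m)$. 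Thus $J_{\us}(m)=c\,\delta_{-\us}(m)$ with $c=J_{\us}(I_N)>0$ a constant independent of $m$.

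\textbf{Step 2: finiteness of the constant and conclusion.} One must check $0<J_{\us}(I_N)<\infty$. This is precisely the content of $\RVa\in\mathcal{M}(\ZVa)$ together with $I_N\in\CVa=M_{F(\RVa)}$, which holds by Theorem~\ref{hyper}(ii) since $\us\in\mathfrak{X}\cap\R^r_{>0}$; the value $J_{\us}(I_N)$ equals $J_{\RVa}(I_N)$, finite because $I_N$ is an interior point of the domain of means. Finally, taking $-(\log\cdot)'$ of $J_{\us}(m)=c\,\delta_{-\us}(m)$ gives $\psi_{\us}(m)=-(\log J_{\us})'(m)=-(\log\delta_{-\us})'(m)$, which is~\eqref{psi}.

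\textbf{Main obstacle.} The computation is essentially bookkeeping of adjoints and characters; the one genuinely substantive point is the change of variables $\theta\mapsto\rho(T^{-1})\theta$ preserving $\CV$ and the pairing identity $\scalar{\rho(S)x,\rho^\ast(S)\xi}=\scalar{x,\xi}$, which is what transfers relative invariance of $L_{\RVa}$ into the stated covariance of $J_{\us}$. Care is needed to keep the signs in $\pm\us$ consistent throughout; apart from that, the argument is clean.
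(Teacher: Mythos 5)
Your argument is essentially the paper's own proof: both apply Lemma \ref{lem:Jmu} and then use the change of variables $\theta\mapsto\rho(T)\theta$ (preserving $\CV$) together with the multiplicativity \eqref{BigDel} to show $J_{\RVa}(\rho^\ast(T)I_N)=\delta_{-\us}(\rho^\ast(T)I_N)\,J_{\RVa}(I_N)$, after which the constant disappears under the logarithmic derivative. Despite the visible sign hesitation in Step 1, your corrected chain $\chi_{-\us}(T^{-1})^{-1}=\chi_{\us}(T^{-1})=\chi_{-\us}(T)=\delta_{-\us}(m)$ is right, and the finiteness of $J_{\us}(I_N)$ follows as you say from $I_N\in\CVa=M_{F(\RVa)}$, so the proof is correct.
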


\begin{proof}
By Lemma \ref{lem:Jmu} we obtain $\psi_{\us}=-(\log J_{\RVa})'$. 
For $T \in \HV$, we have
\begin{align*}
J_{\RVa}(\rho^\ast(T)I_N) 
= \sup_{\theta \in \CV} \frac{e^{- \scalar{\rho^\ast(T)I_N,\theta}}}{L_{\RVa}(\theta)}
= \sup_{\theta \in \CV} \chi_{-\us}(T)\frac{e^{- \scalar{I_N,\rho(T)\theta}}}{L_{\RVa}(\rho(T)\theta)},
\end{align*}
where the last equality follows from \eqref{BigDel}.
Since $\rho(T) \CV = \CV$, 
we get
$$
J_{\RVa}(\rho^\ast(T)I_N) 
= \chi_{-\us}(T) \cdot \sup_{\theta \in \CV} \frac{e^{- \scalar{I_N,\theta}}}{L_{\RVa}(\theta)}
= \delta_{-\us}(\rho^\ast(T)I_N) J_{\RVa}(I_N).
$$
We see that 
the function  $\delta_{- \us}$ equals $J_{\RVa}$
up to a constant multiple.
Therefore $-(\log\delta_{- \us})'$ coincides with $-(\log J_{\RVa})'$.
\end{proof}
\begin{remark}
It is shown in \cite[Proposition 3.16]{No03} 
   that $- (\log \Delta_{-\us})'$ gives a diffeomorphism from the homogeneous cone $\CV$ onto $\CVa$
   for any $\us \in \R_{>0}^r$, 
   and that $- (\log \delta_{-\us})'$ gives the inverse map of $- (\log \Delta_{-\us})'$.
Proposition \ref{ProPsi} follows from this fact 
 because the mean map $m_{\RVa}$ equals  $-(\log L_{\RVa})' = - (\log \Delta_{-\us})'$ for $\us \in \Xi\cap\R_{>0}^r$
 by (\ref{eqn:LT-Rs}).
Nevertheless, we have given a short simple proof of Proposition \ref{ProPsi} for completeness. 
\end{remark}

Let us evaluate $\psi_{\us}(m) \in \CV$ for $m \in \CVa$.
In general, for a positive integer $M$, 
we regard the set $\mathrm{Sym}(M, \R)$ of $M \times M$ symmetric matrices
as a Euclidean vector space 
with the trace inner product $\tr XY \,\,\,(X, Y \in \mathrm{Sym}(M, \R))$.
Then we consider
the linear map $\phi_i^\ast : \mathrm{Sym}(m_i, \R) \to \ZV,\,\,\,i=1, \ldots, r,$
adjoint to $\phi_i : \ZV \to \mathrm{Sym}(m_i, \R)$ defined in such a way that
$$
\scalar{\xi, \phi_i^\ast(X)}= \tr \phi_i(\xi) X, \qquad {X \in \mathrm{Sym}(m_i, \R),\,\,\xi \in \ZV}.
$$
The linear map $\widecheck{\phi}_i^\ast : \mathrm{Sym}(m_i-1, \R) \to \ZV,\,\,\,i=1, \ldots, r,$
adjoint to $\widecheck{\phi}_i : \ZV \to \mathrm{Sym}(m_i-1, \R)$ is defined similarly.
{
\begin{theorem}\label{InvMean}
		The inverse of the mean map on $\CVa$ is given by the formula
\begin{equation} \label{eqn:psis_formula}
\psi_{\us}(m) = s_r \phi_r^\ast(\phi_r(m)^{-1}) + \sum_{i=1}^{r-1} s_i  \left(\phi_i^\ast( \phi_i(m)^{-1}) - \widecheck{\phi}_i^\ast(\widecheck{\phi}_i(m)^{-1})\right).
\end{equation}
\end{theorem}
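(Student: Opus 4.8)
The plan is to compute $-(\log \delta_{-\us})'(m)$ explicitly using the product formula \eqref{New_delta} from Proposition \ref{proPhi}(iii), and then recognize the result as the right-hand side of \eqref{eqn:psis_formula}. By Proposition \ref{ProPsi} we already know that $\psi_{\us}(m) = -(\log \delta_{-\us})'(m)$, so everything reduces to differentiating a logarithm of a product of determinantal powers. Writing $\delta_{-\us}(\xi) = C_{-\us}\,\phi_r(\xi)^{-s_r}\prod_{i=1}^{r-1}\bigl(\det\phi_i(\xi)/\det\widecheck{\phi}_i(\xi)\bigr)^{-s_i}$, we get
\begin{equation*}
-\log \delta_{-\us}(\xi) = -\log C_{-\us} + s_r \log\phi_r(\xi) + \sum_{i=1}^{r-1} s_i\bigl(\log\det\phi_i(\xi) - \log\det\widecheck{\phi}_i(\xi)\bigr).
\end{equation*}
So the differential of $-\log\delta_{-\us}$ at $m$, applied to a tangent vector (which we may take to be an arbitrary $\eta \in \ZV$), splits into a sum over the blocks.

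The key computational step is the standard identity for the differential of $\log\det$: for a linear map $\phi\colon \ZV \to \mathrm{Sym}(M,\R)$ and $m$ with $\phi(m)$ invertible, one has $\frac{d}{dt}\big|_{t=0}\log\det\phi(m+t\eta) = \tr\bigl(\phi(m)^{-1}\phi(\eta)\bigr)$, since $\phi$ is linear. By the definition of the adjoint map $\phi_i^\ast$, namely $\tr(\phi_i(\eta) X) = \scalar{\eta, \phi_i^\ast(X)}$, this equals $\scalar{\eta, \phi_i^\ast(\phi_i(m)^{-1})}$. The same works for $\widecheck{\phi}_i$ and for $\phi_r$ (where $\phi_r(m) = n_r m_{rr}$ is a positive scalar, so $\log\phi_r(m+t\eta)$ differentiates to $\phi_r(m)^{-1}\phi_r(\eta) = \scalar{\eta,\phi_r^\ast(\phi_r(m)^{-1})}$ with the $1\times 1$ convention). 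Collecting terms, the differential of $-\log\delta_{-\us}$ at $m$ is the linear functional $\eta \mapsto \scalar{\eta,\; s_r\phi_r^\ast(\phi_r(m)^{-1}) + \sum_{i=1}^{r-1} s_i(\phi_i^\ast(\phi_i(m)^{-1}) - \widecheck{\phi}_i^\ast(\widecheck{\phi}_i(m)^{-1}))}$, and since we identify $\ZV$ with its dual via the trace inner product, the gradient is exactly the claimed expression \eqref{eqn:psis_formula}.

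One point that needs care is that $\phi_i(m)$ and $\widecheck{\phi}_i(m)$ are indeed invertible for $m \in \CVa$, so that all the inverses in the formula make sense; this follows from Proposition \ref{proPhi}(ii), which expresses $\det\phi_i(m) = \chi_{\underline{m}_i}(T)\det\phi_i(I_N)$ for $m = \rho^\ast(T)I_N$ with $T\in\HV$, hence strictly positive (and similarly for $\widecheck{\phi}_i$, and for $\phi_r(m)=n_r m_{rr}>0$). Apart from this, the argument is a direct differentiation, so I do not expect a genuine obstacle — the main thing to get right is bookkeeping: that $\phi_i$ is genuinely linear (so the chain rule produces $\phi_i(\eta)$ and not something quadratic), that the adjoint convention $\scalar{\xi,\phi_i^\ast(X)} = \tr\phi_i(\xi)X$ matches the trace inner product used throughout, and that the $i=r$ term is handled consistently with the scalar identification $\mathrm{Sym}(1,\R)\equiv\R$. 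With those in place the theorem follows immediately from Proposition \ref{ProPsi} and Proposition \ref{proPhi}(iii).
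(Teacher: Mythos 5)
Your proposal is correct and follows essentially the same route as the paper's own proof: apply Proposition \ref{ProPsi} to reduce to differentiating $-\log\delta_{-\us}$, use the product formula \eqref{New_delta} from Proposition \ref{proPhi}(iii), differentiate the log-determinants (with $\phi_i$ linear, so $D_\alpha\log\det\phi_i(m)=\tr\phi_i(\alpha)\phi_i(m)^{-1}$), and pass to the adjoint maps $\phi_i^\ast$, $\widecheck{\phi}_i^\ast$ to read off the gradient. Your additional remarks on invertibility of $\phi_i(m)$ for $m\in\CVa$ and on the scalar case $i=r$ are correct and consistent with the paper.
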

}
\begin{proof}
For any $\alpha \in \ZV$,
	we see from \eqref{psi} and Proposition \ref{proPhi} (iii) that 
	\begin{multline*}
	\scalar{\alpha, \psi_{\us}(m)} 
	= - D_{\alpha}\log \delta_{-\us}(m)
	=  s_r D_{\alpha} \log \phi_r(m) + \sum_{i=1}^{r-1} s_i D_{\alpha} \left( \log \det \phi_i(m) - \log \det \widecheck{\phi}_i(m) \right),
	\end{multline*}
	where $D_\alpha$ denotes the directional derivative in the direction of $\alpha$.
	By the well-known formula for the derivative of log-determinant, 
	we get
	\begin{equation} \label{eqn:phi_inverse}
	\scalar{\alpha, \psi_{\us}(m)} =
	s_r \frac{\phi_r(\alpha)}{\phi_r(m)} + \sum_{i=1}^{r-1} s_i \left( \tr \phi_i(\alpha) \phi_i(m)^{-1} - \tr \widecheck{\phi}_i(\alpha) \widecheck{\phi}_i(m)^{-1} \right).
	\end{equation}
	Using the adjoint  maps, we rewrite \eqref{eqn:phi_inverse} as
	\begin{multline*}
	\scalar{\alpha, \psi_{\us}(m)} =
	s_r \scalar{\alpha, \phi_r^\ast(\phi_r(m)^{-1})} + 
	\sum_{i=1}^{r-1} s_i \left( \scalar{\alpha, \phi_i^\ast( \phi_i(m)^{-1})} 
	- \scalar{\alpha, \widecheck{\phi}_i^\ast(\widecheck{\phi}_i(m)^{-1})} \right),
	\end{multline*}
	so that we obtain formula \eqref{eqn:psis_formula}.

	\end{proof}

	Let $\un := (n_1, \ldots, n_r)$. 
	Noting that $\hat{m}^{-1} \in \CV$,
	we have
	$$\det \hat{m}^{-1} = \Delta_{\un}(\hat{m}^{-1}) = \delta_{-\un}(m)$$
	by (\ref{eqn:Delta_s}) and (\ref{eqn:Delta_delta2}).
	Thus, for $\alpha \in \ZV$ we observe
	$$
	\scalar{\alpha, \psi_{\un}(m)} = - D_{\alpha}\log \delta_{-\un}(m) = - D_{\alpha}\log \det \hat{m}^{-1} = \tr \alpha \hat{m}^{-1} = \scalar{\alpha, \hat{m}^{-1}}.
	$$
	Therefore, by \eqref{eqn:psis_formula} we get
	\begin{corollary}
		{The inverse of the bijection $y{\mapsto} \pi(y^{-1}), \CV\rightarrow \CVa$ is given explicitly by}
	\begin{equation} \label{eqn:gen_Lauritzen}
	m\mapsto\hat{m}^{-1} = \psi_{\un}(m) = n_r \phi_r^\ast(\phi_r(m)^{-1}) + \sum_{i=1}^{r-1} n_i  \Bigl(\phi_i^\ast( \phi_i(m)^{-1}) - \widecheck{\phi}_i^\ast(\widecheck{\phi}_i(m)^{-1})\Bigr).
	\end{equation}
	\end{corollary}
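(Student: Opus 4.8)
The plan is to derive the Corollary directly from Theorem \ref{InvMean} by specializing $\underline{s}$ to $\underline{n} = (n_1,\ldots,n_r)$. First I would check that $\underline{n}$ indeed lies in $\mathfrak{X}\cap\R^r_{>0}$, so that the hypotheses of Section \ref{SectionInvMean} apply and $\psi_{\underline{n}}$ is well defined as the inverse of the mean map $m_{\mathcal{R}^\ast_{\underline{s}}}$ on $\CVa$; this is immediate since each $n_k \geq 1 > \tfrac12\sum_{l>k}\epsilon_l\dim\VV_{lk}$ is not required — rather $\underline{n}$ sits in the chamber $\mathfrak{X}(1,\ldots,1)$ provided each $n_k$ exceeds the relevant half-sum, which one verifies using the structural constraints on the $\VV_{lk}$, or more simply one just needs $\underline{n}\in\R^r_{>0}$, which is obvious.

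The key computational step is the identification $\delta_{-\underline{n}}(m) = \det\hat{m}^{-1}$. Here I would use the chain of equalities already recorded in the excerpt: by \eqref{eqn:Delta_s} with $\underline{s}=\underline{n}$ the exponents telescope so that $\Delta_{\underline{n}}(x) = \det x$ for all $x\in\CV$, and then \eqref{eqn:Delta_delta2} gives $\Delta_{\underline{n}}(\hat{m}^{-1}) = \delta_{-\underline{n}}(m)$; combining, $\delta_{-\underline{n}}(m) = \det\hat{m}^{-1}$. Next I would take the directional derivative: for any $\alpha\in\ZV$, using \eqref{psi} and the standard formula $D_\alpha\log\det(\cdot)$ together with the defining property $\pi(\hat m)=m$ and the fact that the trace inner product computes $\scalar{\alpha,\hat m^{-1}} = \tr(\alpha\,\hat m^{-1})$ for $\alpha\in\ZV$, one gets
\begin{equation*}
\scalar{\alpha,\psi_{\underline{n}}(m)} = -D_\alpha\log\delta_{-\underline{n}}(m) = -D_\alpha\log\det\hat{m}^{-1} = \tr(\alpha\,\hat{m}^{-1}) = \scalar{\alpha,\hat{m}^{-1}}.
\end{equation*}
Since this holds for all $\alpha\in\ZV$ and both $\psi_{\underline{n}}(m)$ and $\hat{m}^{-1}$ lie in $\ZV$ (note $\hat m^{-1}\in\CV\subset\ZV$ by Definition \ref{hat}), nondegeneracy of the trace inner product on $\ZV$ forces $\psi_{\underline{n}}(m) = \hat{m}^{-1}$.

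Finally I would substitute $\underline{s}=\underline{n}$ into formula \eqref{eqn:psis_formula} of Theorem \ref{InvMean} to obtain the explicit right-hand side of \eqref{eqn:gen_Lauritzen}, and separately observe that the map $m\mapsto\hat m^{-1}$ is exactly the inverse of $y\mapsto\pi(y^{-1})$: indeed for $y=\rho(T)I_N\in\CV$ we have $y^{-1}=\rho(T^{-1})I_N$, hence $\pi(y^{-1})=\rho^\ast(T^{-1})I_N\in\CVa$ by \eqref{rhos}, and applying $\widehat{\phantom{m}}$ to $\xi=\pi(y^{-1})$ returns $\rho((T^{-1})^\top)I_N = (T^\top T)^{-1} = y^{-1}$, whose inverse is $y$; this shows the two maps are mutually inverse bijections between $\CV$ and $\CVa$. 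The main (and only) subtlety is bookkeeping: making sure the telescoping in \eqref{eqn:Delta_s} really collapses to $\det x$ when $s_k=n_k$ for every $k$, and that the directional-derivative identity is applied on the correct space $\ZV$ rather than on $\mathrm{Sym}(N,\R)$, so that the trace pairing used is the one identifying $\ZVa$ with $\ZV$. Everything else is a direct specialization of results already proved.
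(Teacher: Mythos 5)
Your proposal is correct and follows essentially the same route as the paper's own proof: identify $\delta_{-\un}(m)=\Delta_{\un}(\hat{m}^{-1})=\det \hat{m}^{-1}$ via the telescoping in \eqref{eqn:Delta_s} and \eqref{eqn:Delta_delta2}, differentiate using \eqref{psi} and $\pi(\hat m)=m$ to get $\psi_{\un}(m)=\hat{m}^{-1}$, and then specialize \eqref{eqn:psis_formula} to $\us=\un$. Two cosmetic remarks only: in your check that the maps are mutually inverse, $(T^\top T)^{-1}$ should read $(TT^\top)^{-1}=y^{-1}$ (since $y=\rho(T)I_N=TT^\top$), and the hedging about the chamber $\mathfrak{X}(1,\ldots,1)$ is unnecessary (and $\un\in\mathfrak{X}(1,\ldots,1)$ can in fact fail, e.g.\ for $\mathrm{Sym}_+(r,\R)$ with $r\ge 3$); like the paper, your argument ultimately only uses $\un\in\R^r_{>0}$.
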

{	If $n_1 = n_2 = \ldots = n_r = 1$, then \eqref{eqn:gen_Lauritzen} yields the Lauritzen formula {\cite[(5.21)]{Lbook}} for homogeneous graphical cones
	(cf. Example \ref{Vin}).} 
{Formula \eqref{eqn:gen_Lauritzen} generalizes the Lauritzen formula to all homogeneous cones.}

\section{Variance function of Wishart exponential families on $\CVa$}
\label{VarQ}
As in the previous section, let $\us \in \mathfrak{X} \cap \R_{>0}^r$.
\begin{lemma} \label{LEMequiVar}
	The variance functions of the Wishart exponential families
	satisfy   
	\begin{align}\label{equiVar}
	\V_{F(\RVa)}(\rho^\ast(T)I_N)=\rho^\ast(T)\V_{F(\RVa)}(I_N)\rho(T),\qquad T\in\HV,
	\end{align}
	\begin{align}\label{equiVar2}
	\V_{F(\RV)}(\rho(T)I_N)=\rho(T)\V_{F(\RV)}(I_N)\rho^\ast(T),\qquad T\in\HV.
	\end{align}
\end{lemma}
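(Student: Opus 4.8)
The plan is to obtain both identities from the relative invariance of the Riesz measures recorded in Section~\ref{Riesz}, together with the elementary way a natural exponential family transforms under a linear automorphism that leaves the generating measure relatively invariant; no new analysis on the cone is required.

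The first step is to isolate the general principle. Let $\mu\in\mathcal{M}(\E)$ be relatively invariant under $g\in\GL(\E)$, say $\mu(gA)=c_g\mu(A)$, equivalently $L_\mu(g^\ast\theta)=c_g^{-1}L_\mu(\theta)$ and hence $k_\mu(g^\ast\theta)=k_\mu(\theta)-\log c_g$. Pushing a member $P(\theta,\mu)$ forward by $g$ and changing variables (using $g_\ast\mu=c_g^{-1}\mu$ and $c_{g^{-1}}=c_g^{-1}$) gives
$$g_\ast P(\theta,\mu)=P\bigl((g^{-1})^\ast\theta,\mu\bigr),$$
so that $g^\ast$ maps $\Theta(\mu)$ onto itself, $g$ maps $M_{F(\mu)}$ onto itself with $m_\mu\bigl((g^{-1})^\ast\theta\bigr)=g\,m_\mu(\theta)$, and the covariance operator of $gX$ equals $g\,(\mathrm{Cov}\,X)\,g^\ast$ for $X\sim P(\theta,\mu)$. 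At the level of the variance function this reads
$$\V_{F(\mu)}(g\,m)=g\,\V_{F(\mu)}(m)\,g^\ast,\qquad m\in M_{F(\mu)}.$$

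It then remains to apply this twice. By \eqref{BigDel} and \eqref{eqn:LT-Rs}, $L_{\RVa}(\rho(T)\theta)=\chi_{-\us}(T)\,L_{\RVa}(\theta)$, i.e.\ $\RVa$ is relatively invariant under $\rho^\ast(\HV)$ with $g=\rho^\ast(T)$, $g^\ast=\rho(T)$ and $c_g=\chi_{\us}(T)>0$; symmetrically, \eqref{SmallDel} shows $\RV$ is relatively invariant under $\rho(\HV)$ with $g=\rho(T)$, $g^\ast=\rho^\ast(T)$. Since $M_{F(\RVa)}$ is a non-empty $\rho^\ast(\HV)$-invariant subset of $\CVa$ on which $\rho^\ast(\HV)$ acts transitively, in fact $M_{F(\RVa)}=\CVa\ni I_N$, and likewise $M_{F(\RV)}=\CV\ni I_N$ (under the condition on $\us$ that makes $\RV\in\mathcal{M}(\ZV)$); so taking $m=I_N$ in the displayed equivariance yields \eqref{equiVar} and \eqref{equiVar2}. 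An equivalent route staying inside the paper's toolkit is to differentiate the equivariance $\psi_{\us}(\rho^\ast(T)m)=\rho^\ast(T^{-1})\psi_{\us}(m)$ (which is a restatement of the above, or of Proposition~\ref{ProPsi} with \eqref{SmallDel}) and use $\V_{F(\RVa)}=-[\psi_{\us}']^{-1}$ from \eqref{defV}; the factors $\rho^\ast(T^{-1})$ and $\rho(T^{-1})$ then invert back to $\rho^\ast(T)$ and $\rho(T)$.

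I do not anticipate a real obstacle; the only delicate point is the placement of adjoints and inverses. The canonical parameter transforms contravariantly, $\theta\mapsto(g^{-1})^\ast\theta$, but that inverse is precisely absorbed when $\V_{F(\mu)}$ is evaluated at the transformed mean $g\,m$, so the final identities carry $g$ and $g^\ast$ with no inverse — which is why \eqref{equiVar} pairs $\rho^\ast(T)$ on the left with $\rho(T)$ on the right, and \eqref{equiVar2} does the reverse. One also needs to know in advance that $I_N$ lies in the relevant domain of means, and this is supplied by (the argument of) Section~\ref{SectionInvMean}.
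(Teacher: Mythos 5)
Your proposal is correct and follows essentially the same route as the paper: the paper deduces both identities from the relative invariance of the Riesz measures and the resulting equivariance of the Wishart family, simply citing formula (2.2) of \cite{IK15} for the transformation rule $\V_{F(\mu)}(g\,m)=g\,\V_{F(\mu)}(m)\,g^\ast$ that you derive explicitly. Your verification that $I_N$ lies in the domain of means (via transitivity of the triangular group action, or via Section \ref{SectionInvMean}) is the same ingredient the paper relies on, so there is nothing to add.
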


\begin{proof}
	The identity \eqref{equiVar} for the variance function
	follows  from the 
	invariance of $F(\RVa)$ under $\rho(\HV)$   (see for example formula (2.2) in \cite{IK15}).
	The invariance property of $\RV$ results in identity \eqref{equiVar2}.
\end{proof}

In \cite[Theorem 7]{IK15} it was shown that property \eqref{equiVar2}
actually characterizes measure $\RV$. 
The same is true for \eqref{equiVar} and $\RVa$.

Recall that $N=n_1+\ldots+n_r=N_r$. 
For $z\in\mathrm{Sym}(N_k,\R)$, we define the matrix $z_0\in\mathrm{Sym}(N,\R)$ completed with zeros, that is, $(z_0)_{\{1:k\}}=z$ and \mbox{$(z_0)_{ij}=0$} if $\max\{i,j\}>N_k$.
Set $J_k:=\left(I_{N_k}\right)_0\in\ZV$ and $J_k^\ast:=I_N-J_k$.
\begin{proposition}\label{proHat}
If $y=T^\top T\in\mathrm{Sym}_+(N,\R)$ with $T\in\HV$, then
$$T^\top J_k^\ast\,\,  T =y-\left[ \left(y^{-1}\right)_{\{1:k\}}\right]^{-1}_0.$$
\end{proposition}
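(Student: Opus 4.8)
The plan is to exploit the block-triangular structure of $T\in\HV$ together with the elementary formula for the inverse of a partitioned symmetric matrix. Write $T$ in block-lower-triangular form relative to the splitting $N=N_k+(N-N_k)$, say
\[
T=\begin{pmatrix} T_{\{1:k\}} & 0 \\ B & C\end{pmatrix},
\]
where $T_{\{1:k\}}$ is itself lower triangular in $\HV$-form and $C$ is lower triangular with positive diagonal. Then $y=T^\top T$ has the explicit $2\times 2$ block form, and since $J_k^\ast=\mathrm{diag}(0_{N_k},I_{N-N_k})$, one computes directly
\[
T^\top J_k^\ast T=\begin{pmatrix} B^\top B & B^\top C \\ C^\top B & C^\top C\end{pmatrix}.
\]
So the claim reduces to checking that $y-\bigl[(y^{-1})_{\{1:k\}}\bigr]^{-1}_0$ has exactly this block form.

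Next I would compute $y-\bigl[(y^{-1})_{\{1:k\}}\bigr]^{-1}_0$ block by block. From $y=T^\top T$ we get $y_{\{1:k\}}=T_{\{1:k\}}^\top T_{\{1:k\}}+B^\top B$, the $(1,2)$ block of $y$ is $B^\top C$, and the $(2,2)$ block is $C^\top C$; so after subtracting the zero-completed matrix $\bigl[(y^{-1})_{\{1:k\}}\bigr]^{-1}_0$, the $(1,2)$, $(2,1)$ and $(2,2)$ blocks of the difference are already $B^\top C$, $C^\top B$, $C^\top C$ as required. The only nontrivial point is the $(1,1)$ block: I must show
\[
\bigl[(y^{-1})_{\{1:k\}}\bigr]^{-1}=y_{\{1:k\}}-B^\top B=T_{\{1:k\}}^\top T_{\{1:k\}}.
\]
This is precisely the standard Schur-complement identity: for a symmetric positive definite $y$, the upper-left block of $y^{-1}$ is the inverse of the Schur complement of the lower-right block, i.e. $(y^{-1})_{\{1:k\}}=\bigl(y_{\{1:k\}}-(y_{12})(y_{22})^{-1}(y_{21})\bigr)^{-1}$. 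Using $y_{12}=B^\top C$, $y_{22}=C^\top C$ and $y_{21}=C^\top B$, the Schur complement is $y_{\{1:k\}}-B^\top C(C^\top C)^{-1}C^\top B=y_{\{1:k\}}-B^\top B$ (since $C$ is invertible, $C(C^\top C)^{-1}C^\top=I$), which equals $T_{\{1:k\}}^\top T_{\{1:k\}}$. Inverting gives the desired $(1,1)$ block, and since the zero-completion $(\cdot)_0$ only affects the $(1,1)$ block, all four blocks match.

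The only mild obstacle is bookkeeping: one must be careful that $T_{\{1:k\}}$ is genuinely the relevant triangular block (this is exactly the identity $(TT^\top)_{\{1:k\}}=T_{\{1:k\}}T_{\{1:k\}}^\top$ recalled before \eqref{eqn:Delta_s}, applied here to $T^\top$ which is upper triangular, hence to $T$ itself for the lower-left corner), and that positive-definiteness of $y$ (equivalently $t_{ll}>0$ for all $l$) guarantees $C$ and all the Schur complements are invertible so that every inverse written above makes sense. Once the block decomposition is set up, the computation is a one-line Schur-complement identity and no homogeneous-cone machinery beyond the definition of $\HV$ and of $z\mapsto z_0$ is needed.
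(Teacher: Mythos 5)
Your proof is correct, but it follows a genuinely different route from the paper's. The paper proves the equivalent identity $T^\top J_k T=\bigl(T_{\{1:k\}}^\top T_{\{1:k\}}\bigr)_0$ and then sets $S=T^{-1}\in\HV$, writes $y^{-1}=SS^\top$, and uses the two triangular-submatrix facts $(SS^\top)_{\{1:k\}}=S_{\{1:k\}}S_{\{1:k\}}^\top$ and $(S_{\{1:k\}})^{-1}=(S^{-1})_{\{1:k\}}=T_{\{1:k\}}$ to conclude $\bigl[(y^{-1})_{\{1:k\}}\bigr]^{-1}=T_{\{1:k\}}^\top T_{\{1:k\}}$; you never invert $T$ at all, but instead partition $T=\begin{pmatrix} T_{\{1:k\}} & 0\\ B & C\end{pmatrix}$, compute all four blocks of $T^\top J_k^\ast T$ and of $y=T^\top T$, and settle the only nontrivial $(1,1)$ block via the Schur-complement formula $(y^{-1})_{\{1:k\}}=\bigl(y_{\{1:k\}}-y_{12}y_{22}^{-1}y_{21}\bigr)^{-1}$ together with $C(C^\top C)^{-1}C^\top=I_{N-N_k}$ for the square invertible block $C$. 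Your argument is more elementary and self-contained: it only needs $T$ block lower triangular with invertible lower-right block, not that $T^{-1}$ again lies in $\HV$ nor the identities about principal submatrices of triangular matrices, and it makes transparent why only the $(1,1)$ block is affected by the zero-completion. The paper's argument is a bit shorter because it recycles the submatrix identity already recorded before \eqref{eqn:Delta_s}. One cosmetic remark: your parenthetical invoking $(TT^\top)_{\{1:k\}}=T_{\{1:k\}}T_{\{1:k\}}^\top$ ``applied to $T^\top$'' is not what your computation actually uses -- all you need is that the top-left $N_k\times N_k$ block of $T$ is $T_{\{1:k\}}$ and the top-right block vanishes, which is immediate from lower-triangularity -- so that sentence could simply be dropped.
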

\begin{proof}
We have to show that $T^\top J_k\,\,T=\left[ \left(y^{-1}\right)_{\{1:k\}}\right]^{-1}_0$. Observe first that
$$T^\top J_k\,\,T=\left(T_{\{1:k\}}^\top T_{\{1:k\}}\right)_0.$$
Set $S=T^{-1}\in\HV$. Then $y^{-1}=SS^\top$. Since $(SS^\top)_{\{1:k\}}=S_{\{1:k\}}S_{\{1:k\}}^\top$,
 we have
 $$\left[(y^{-1})_{\{1:k\}}\right]^{-1}=(S_{\{1:k\}}^\top)^{-1} (S_{\{1:k\}})^{-1}.$$
Thanks to $(S_{\{1:k\}})^{-1}=(S^{-1})_{\{1:k\}}=T_{\{1:k\}}$,
we get the assertion.
\end{proof}

Now we are ready to state and prove our main theorem.
\begin{theorem}\label{main}
Let $\us\in\mathfrak{X} \cap \R^r_{>0}$. Then, the variance function of $F(\RVa)$ is given by ($m\in\CVa$)
$$
\V_{F(\RVa)}(m)=\pi\circ\left\{ \frac{n_1}{s_1}\rho(\hat{m})+\sum_{i=2}^r\left(\frac{n_i}{s_i}-\frac{n_{i-1}}{s_{i-1}}\right)\rho\left(\hat{m}-\left[ \left(\hat{m}^{-1}\right)_{\{1:i-1\}}\right]^{-1}_0\right)\right\}.
$$
\end{theorem}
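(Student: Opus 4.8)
The plan is to differentiate the inverse mean map $\psi_{\underline{s}}$ obtained in Theorem \ref{InvMean} and invoke the identity $\V_{F(\RVa)}(m) = -[\psi_{\underline{s}}'(m)]^{-1}$ from \eqref{defV}. Equivalently, and more convenient here, I would first compute $\psi_{\underline{s}}'(m)$ as a linear operator on $\ZV$ and then show that the operator in the statement is its inverse (up to the projection $\pi$), so that no operator inversion needs to be carried out explicitly. The key computational input is Proposition \ref{proPhi}(iv): the $\phi_i$-bilinear form appearing when one differentiates $\tr \phi_i(\alpha)\phi_i(m)^{-1}$ telescopes into the trace inner product on the "$i$-th slice". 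Concretely, differentiating \eqref{eqn:phi_inverse} in the direction $\beta \in \ZV$ gives
\begin{align*}
D_\beta \scalar{\alpha, \psi_{\underline{s}}(m)} &= -s_r\,\frac{\phi_r(\alpha)\phi_r(\beta)}{\phi_r(m)^2} - \sum_{i=1}^{r-1} s_i\Bigl( \tr \phi_i(\alpha)\phi_i(m)^{-1}\phi_i(\beta)\phi_i(m)^{-1}  \\
&\qquad\qquad - \tr \widecheck{\phi}_i(\alpha)\widecheck{\phi}_i(m)^{-1}\widecheck{\phi}_i(\beta)\widecheck{\phi}_i(m)^{-1}\Bigr),
\end{align*}
and then one needs the $m$-dependent analogue of Proposition \ref{proPhi}(iv). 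This is where the $\rho^\ast(\HV)$-equivariance \eqref{equiVar} enters: by Lemma \ref{LEMequiVar} it suffices to establish the formula at $m = I_N$, and for general $m = \rho^\ast(T)I_N = T^\top T$ one transports via $T$. At $m = I_N$, Proposition \ref{proPhi}(iv) says exactly that the bracketed $\phi_i$-expression, summed with the right weights, reconstitutes the trace inner product, and the quadratic-map machinery identifies the slice $\rho(\hat m) - \rho([(\hat m^{-1})_{\{1:i-1\}}]^{-1}_0)$ with $\rho(T^\top J_{i-1}^\ast T)$ by Proposition \ref{proHat}.

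The steps, in order: (1) reduce to $m = I_N$ using \eqref{equiVar} and the multiplicativity \eqref{SmallDel}, checking how $\rho(\hat m)$ and the completed-inverse blocks transform under $T$ — here Proposition \ref{proHat} shows $\hat m - [(\hat m^{-1})_{\{1:i-1\}}]^{-1}_0 = T^\top J_{i-1}^\ast T = \rho(T^\top)J_{i-1}^\ast$, so $\rho$ of it equals $\rho(T^\top)\,\rho(J_{i-1}^\ast)\,\rho(T)$ composed appropriately. (2) At $I_N$, compute $-\psi_{\underline{s}}'(I_N)$ as the operator $\alpha \mapsto$ (the bilinear form above with $m=I_N$), and use Proposition \ref{proPhi}(iv) together with the definition of $(\cdot|\cdot)$ and the trace inner product on $\ZV$ to write it as a weighted sum of the "coordinate projections" onto the blocks $X_{ii}$ and $X_{li}$. (3) Observe that $\pi \circ \rho(J_{i-1}^\ast)$ acting on $\ZV$ is, block-diagonally, the identity on the blocks $X_{ll}, X_{lk}$ with $\max\{l,k\} \geq i$ and zero on the rest, so the telescoping sum $\frac{n_1}{s_1}\pi\rho(I_N) + \sum_{i\geq 2}(\frac{n_i}{s_i}-\frac{n_{i-1}}{s_{i-1}})\pi\rho(J_{i-1}^\ast)$ assigns to the $(l,k)$-block the weight $\frac{n_{\max(l,k)}}{s_{\max(l,k)}}$ (using $J_0^\ast = I_N$, $J_r^\ast = 0$). (4) Verify that this operator is the inverse of $-\psi_{\underline{s}}'(I_N)$ by checking it is the identity on each block, using the normalization $\phi_i(I_N), \widecheck{\phi}_i(I_N)$ and that the weight on the $(l,k)$-block of $-\psi_{\underline{s}}'(I_N)$ is $\frac{s_{\max(l,k)}}{n_{\max(l,k)}}$ times the block-wise identity — again a direct consequence of Proposition \ref{proPhi}(iv) read blockwise.

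The main obstacle will be step (2)–(4): making precise the blockwise diagonalization of $-\psi_{\underline{s}}'(I_N)$ and of $\pi\circ\rho(J_{i-1}^\ast)$ and matching their weights, since the operators live on $\ZV$ with the trace inner product and the $\phi_i$ are not literally block-diagonal — one must carefully track how the $(i,i)$-entry versus the off-diagonal $\VV_{li}$-components contribute, exactly the bookkeeping isolated in Proposition \ref{proPhi}(i) and (iv). Once one knows that both $-\psi_{\underline{s}}'(I_N)$ and the candidate variance operator are simultaneously block-diagonalized with reciprocal scalar weights $\frac{s_{\max(l,k)}}{n_{\max(l,k)}}$ and $\frac{n_{\max(l,k)}}{s_{\max(l,k)}}$ on the $(l,k)$-block, the identity $\V_{F(\RVa)}(I_N) = -[\psi_{\underline{s}}'(I_N)]^{-1}$ is immediate, and the general-$m$ case follows by the equivariance reduction. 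The appearance of $\pi$ in the statement is because $\rho(\hat m)$ maps into $\mathrm{Sym}(N,\R)$ rather than $\ZV$, and the covariance operator must land in $L_S(\ZVa,\ZV)$; precomposing, or rather postcomposing, with $\pi$ is the required correction, consistent with \eqref{rhos}.
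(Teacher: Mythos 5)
Your proposal follows essentially the same route as the paper's proof: differentiate $\log\delta_{-\us}$ twice (equivalently $\psi_{\us}$ once), evaluate at $I_N$ via Proposition \ref{proPhi}(iv), recognize the resulting operator as a weighted sum of the orthogonal projections $\PP_{i-1}=\rho(J_{i-1}^\ast)$, invert it blockwise, and transport back to general $m$ by the equivariance \eqref{equiVar} together with $\rho^\ast(T)=\pi\circ\rho(T^\top)$ and Proposition \ref{proHat}. One bookkeeping slip: $\rho(J_{i-1}^\ast)$ retains the block $\VV_{lk}$ exactly when \emph{both} indices are $\geq i$, i.e.\ $\min\{l,k\}\geq i$, not $\max\{l,k\}\geq i$, and accordingly the scalar weights on the $(l,k)$-block (relative to the trace inner product) are $\frac{s_{\min\{l,k\}}}{n_{\min\{l,k\}}}$ for $-\psi_{\us}'(I_N)$ and $\frac{n_{\min\{l,k\}}}{s_{\min\{l,k\}}}$ for the candidate variance operator, as Proposition \ref{proPhi}(iv) read blockwise actually shows. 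Since your two mislabelings are consistent with each other, the reciprocal matching and hence the whole argument go through unchanged once ``max'' is replaced by ``min''.
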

\begin{proof}

{Similarly {to} the proof of Theorem \ref{InvMean}, using
		formula \eqref{psi} from Proposition \ref{ProPsi},
		 we obtain}
\begin{multline*}
\scalar{\alpha,\psi_{\us}'(m)\beta}=-D_{\alpha,\beta}^2 \log\delta_{-\us}(m)\\
=s_r D^2_{\alpha,\beta}\log \phi_r(m)+\sum_{i=1}^{r-1} s_i\left( D^2_{\alpha,\beta}\log\det \phi_i(m)-D^2_{\alpha,\beta}\log\det \widecheck{\phi}_i(m)\right)\\
 =-s_r \frac{\phi_r(\alpha)\phi_r(\beta)}{\phi_r(m)^{2}}- \sum_{i=1}^{r-1} s_i \left(\tr \phi_i(\alpha)\phi_i(m)^{-1}\phi_i(\beta)\phi_i(m)^{-1} \phantom{\widecheck{\phi}_i(m)^{-1}}\right. \\
 \left.-\tr\widecheck{\phi}_i(\alpha)\widecheck{\phi}_i(m)^{-1}\widecheck{\phi}_i(\beta)\widecheck{\phi}_i(m)^{-1}  \right),
\end{multline*}
{where $D^2_{\alpha,\beta} =D_{\alpha} D_{\beta}$.}
Setting $m=I_N$, we obtain by Proposition \ref{proPhi} \rm{(iv)}
$$\scalar{\alpha,\psi_{\us}'(I_N)\beta}=-s_r\alpha_{rr}\beta_{rr}-\sum_{i=1}^{r-1} \frac{s_i}{n_i}\left(n_i\alpha_{ii}\beta_{ii}+2\sum_{l=i+1}^r n_l(\alpha_{li}|\beta_{li})\right).$$
Recall that $J_k^\ast=\begin{pmatrix}0_{N_k} & \\ & I_{n_{k+1}+\ldots+n_r}\end{pmatrix}\in\ZV$ and set $\PP_k:=\rho(J_k^\ast)$, $k=1,\ldots,r-1$, $\PP_0=\mathrm{Id}_{\ZV}$. $\PP_i$ is the orthogonal projection onto $\bigoplus_{i<k,l\leq r} \VV_{lk}$.
Then, through {a} direct computation, one can show that for \mbox{$i=1,\ldots,r-1$},
$$n_i\alpha_{ii}\beta_{ii}+2\sum_{l=i+1}^r n_l(\alpha_{li}|\beta_{li}) = \scalar{\alpha,(\PP_{i-1}-\PP_{i}) \beta},$$
and
$$n_r\alpha_{rr}\beta_{rr}=\scalar{\alpha,\PP_{r-1}\beta}.$$
These imply that ($\PP_r:=0_{\ZV}$)
$$\psi_{\us}'(I_N)=-\sum_{i=1}^r \frac{s_i}{n_i} (\PP_{i-1}-\PP_{i}).$$
Since $(\PP_{i-1}-\PP_{i})(\PP_{j-1}-\PP_{j})=\delta_{ij}(\PP_{i-1}-\PP_{i})$, we have
\begin{align}\label{psiinv}
[\psi_{\us}'(I_N)]^{-1}=-\sum_{i=1}^r \frac{n_i}{s_i} (\PP_{i-1}-\PP_{i}).
\end{align}
Indeed,
$$\psi_{\us}'(I_N)\sum_{j=1}^r \left(-\frac{n_j}{s_j} (\PP_{j-1}-\PP_{j})\right)=\sum_{i=1}^r (\PP_{i-1}-\PP_{i})=\PP_0=\mathrm{Id}_{\ZV}.$$
Thus, \eqref{psiinv} gives us 
$$\V_{{F(\RVa)}}(I_N)=-[\psi_{\us}'(I_N)]^{-1}=\frac{n_1}{s_1}\mathrm{Id}_{\ZV}+\sum_{i=2}^r\left(\frac{n_i}{s_i}-\frac{n_{i-1}}{s_{i-1}}\right)\PP_{i-1}.$$
Finally, using \eqref{equiVar} we obtain for $m=\rho^\ast(T)I_N$,
\begin{align}\label{VformT}
\begin{split}
\V_{{F(\RVa)}}(m)&=\rho^\ast(T)\V_{{F(\RVa)}}(I_N)\rho(T)\\
&=\frac{n_1}{s_1}\rho^\ast(T)\rho(T)+\sum_{i=2}^r\left(\frac{n_i}{s_i}-\frac{n_{i-1}}{s_{i-1}}\right)\rho^\ast(T)\PP_{i-1}\rho(T).
\end{split}
\end{align}
Since $\rho^\ast(T)=\pi\circ \rho(T^\top)$ and $\hat{m}=T^\top T$, we have
$$\rho^\ast(T)\rho(T)=\pi\circ\rho(T^\top)\rho(T)=\pi\circ\rho(T^\top T)=\pi\circ\rho(\hat{m}),$$
and, by Proposition \ref{proHat}, for $i=2,\ldots,r$, 
$$\rho^\ast(T)\PP_{i-1}\rho(T)=\pi\circ\rho(T^\top J_{i-1}^\ast T)=\pi\circ\rho\left(\hat{m}-\left[ \left(\hat{m}^{-1}\right)_{\{1:i-1\}}\right]^{-1}_0\right).$$
\end{proof}
{\begin{remark}
		Note that the formula  for the inverse of the mean map
		given in Theorem \ref{InvMean}
	{ is not  necessary for the proof of Theorem \ref{main}.}
	{	Formula \eqref{psi} from Proposition \ref{ProPsi} 
		is sufficient.}
		\end{remark}}	
\begin{example}\label{Vin}
Let us apply Theorem \ref{main} to the Wishart exponential families on the Vinberg cone. 
Let
$$\ZV:=\left\{ \begin{pmatrix}x_{11} & 0 & x_{31} \\ 0 & x_{22} & x_{32} \\ x_{31} & x_{32} & x_{33} \end{pmatrix}\colon x_{11},x_{22},x_{33},x_{31},x_{32}\in\R\right\}.$$
Conditions {\rm (V1)--(V3)} are satisfied and we have $n_1=n_2=n_3=1$, $N=r=3$.
Then,
$$\CV=\ZV\cap \mathrm{Sym}_+(3,\R)=\left\{ x\in\ZV\colon x_{11}>0,\,x_{22}>0,\,\det x>0\in\R\right\}$$
and its dual cone is given by 
$$\CVa=\left\{ \xi\in \ZV \colon \xi_{33}>0,\,\xi_{11}\xi_{33}>\xi_{31}^2,\,\xi_{22}\xi_{33}>\xi_{32}^2\right\}.$$ 
{The cone $\CVa$}
 is called the Vinberg cone, while $\CV$ is called the dual Vinberg cone.
The cones $\CVa$ and $\CV$  are the lowest dimensional non-symmetric homogeneous cones.

{
For $\xi \in \ZV$, we have
\begin{gather*}
\phi_1(\xi) = \begin{pmatrix} \xi_{11} & \xi_{31} \\ \xi_{31} & \xi_{33} \end{pmatrix} =: \xi_{\{1,3\}},\qquad
\phi_2(\xi) = \begin{pmatrix} \xi_{22} & \xi_{32} \\ \xi_{32} & \xi_{33} \end{pmatrix} =: \xi_{\{2,3\}},\\
\phi_3(\xi) = \widecheck{\phi}_1(\xi) = \widecheck{\phi}_2(\xi) = \xi_{33}, 
\end{gather*}
 so that
\begin{gather*}
\phi_1^\ast \begin{pmatrix} a & b \\ b & c \end{pmatrix}
= \begin{pmatrix} a & 0 & b \\ 0 & 0 & 0 \\ b & 0 & c\end{pmatrix}, \qquad
\phi_2^\ast \begin{pmatrix} a & b \\ b & c \end{pmatrix}
= \begin{pmatrix} 0 & 0 & 0 \\ 0 & a & b \\ 0 & b & c\end{pmatrix}, \\
\phi_3^\ast(a) 
 = \widecheck{\phi}_1^\ast(a) = \widecheck{\phi}_2^\ast(a)
 = \begin{pmatrix} 0 & 0 & 0 \\ 0 & 0 & 0 \\ 0 & 0 & a\end{pmatrix} 
\end{gather*}
 for $a,b, c \in \R$.
Therefore, for $m \in \CVa$ and $\us = (s_1, s_2, s_3)$ we obtain by \eqref{eqn:psis_formula}
\begin{align*}
 \psi_{\us}(m) 
= s_1 \begin{pmatrix} \frac{m_{33}}{|m_{\{1,3\}}|} & 0 & -\frac{m_{31}}{|m_{\{1,3\}}|}\\ 0 & 0 & 0 \\
      -\frac{m_{31}}{|m_{\{1,3\}}|} & 0 &  \frac{m_{11}}{|m_{\{1,3\}}|} \end{pmatrix}
 &+s_2 \begin{pmatrix} 0 & 0 & 0 \\ 0 & \frac{m_{33}}{|m_{\{2,3\}}|} & -\frac{m_{32}}{|m_{\{2,3\}}|}\\
      0 & -\frac{m_{32}}{|m_{\{2,3\}}|} & \frac{m_{22}}{|m_{\{2,3\}}|} \end{pmatrix} \\
& + (s_3 - s_1 - s_2) \begin{pmatrix} 0 & 0 & 0 \\ 0 & 0 & 0 \\ 0 & 0 & \frac{1}{m_{33}} \end{pmatrix}.
\end{align*}
In particular, \eqref{eqn:gen_Lauritzen} tells us that
\begin{align} \label{eqn:hatm_inverse}
 \hat{m}^{-1} 
= \begin{pmatrix} \frac{m_{33}}{|m_{\{1,3\}}|} & 0 & -\frac{m_{31}}{|m_{\{1,3\}}|}\\ 0 & 0 & 0 \\
      -\frac{m_{31}}{|m_{\{1,3\}}|} & 0 &  \frac{m_{33}}{|m_{\{1,3\}}|} \end{pmatrix}
 + \begin{pmatrix} 0 & 0 & 0 \\ 0 & \frac{m_{33}}{|m_{\{2,3\}}|} & -\frac{m_{32}}{|m_{\{2,3\}}|}\\
      0 & -\frac{m_{32}}{|m_{\{2,3\}}|} & \frac{m_{22}}{|m_{\{2,3\}}|} \end{pmatrix}
 - \begin{pmatrix} 0 & 0 & 0 \\ 0 & 0 & 0 \\ 0 & 0 & \frac{1}{m_{33}} \end{pmatrix}
\end{align}
where $|m_{\{1,3\}}|=m_{11}m_{33}-m_{31}^2$ and $|m_{\{2,3\}}|=m_{22}m_{33}-m_{32}^2$.
This is exactly the Lauritzen formula.
Moreover, we have}
$$
\hat m=
\begin{pmatrix}
m_{11} & \frac{m_{31}m_{32}}{m_{33}} & m_{31} \\
  \frac{m_{31}m_{32}}{m_{33}} & m_{22} & m_{32} \\ m_{31} & m_{32} & m_{33}
\end{pmatrix}
$$
and it is easy to see that the projection $\pi\colon \mathrm{Sym}(3,\R)\to\ZV$ sets $0$ in $(1,2)$ and $(2,1)$ entries, leaving all other entries unchanged.

We shall use Theorem \ref{main} in order to give {$\V_{F(\RVa)}$ explicitly}. 
We denote $M_i= \frac{|m_{\{i,3\}}|}{m_{33}}E_{ii}$ {for $i=1,2$},
 where $E_{ii}$ is the diagonal $3\times 3$ matrix with $1$ in $(i,i)$ entry and all else $0$.
{
Then we have by (\ref{eqn:hatm_inverse})
$$
 \left[ \left(\hat{m}^{-1}\right)_{\{1:1\}}\right]^{-1}_0 = M_1, \qquad
 \left[ \left(\hat{m}^{-1}\right)_{\{1:2\}}\right]^{-1}_0 = M_1 + M_2.
$$
}
Theorem \ref{main} gives for $\us\in\mathfrak{X}\cap \R^3_{>0}$,
\begin{equation}\label{form51}
\begin{split}
 \V_{F(\RVa)}(m) = \pi\circ\left\{ \frac{1}{s_1}\rho(\hat m) \right. +\left(\frac{1}{s_2}-\frac{1}{s_1}   \right)\rho(\hat m-M_1)
\left.+\left(\frac{1}{s_3}-\frac{1}{s_2}   \right)\rho(\hat m-M_1-M_2)\right\}.
\end{split}
\end{equation}
Elementary properties of the quadratic operator $\rho$ and of its bilinear extension
$\rho(a,b)x=\frac12(axb^{\top}+bxa^{\top})$, and the fact that $\rho(M_1, M_2)=0$ on $\ZV$,  imply the
following  formula,  proven by different methods in \cite{GIMamane}:
 \begin{align}\label{form2}\begin{split}
 \V_{F(\RVa)}(m)=\pi\circ\left\{\left( \frac{1}{s_1}+\frac{1}{s_2}-\frac{1}{s_3} \right) \rho(\hat m) 
 \right.+\left(\frac{1}{s_3}-\frac{1}{s_1}   \right)\rho(\hat m-M_1) 
\left.+\left(\frac{1}{s_3}-\frac{1}{s_2}   \right)\rho(\hat m-M_2)\right\}.
\end{split}\end{align}
Observe that formulas \eqref{form51} and \eqref{form2} imply analogous formulas for the homogeneous cone $Q$, dual to the cone $P$ in the vector space 
$$Z:=\left\{ \begin{pmatrix}x_{11}  & x_{21}&0 \\  x_{21} & x_{22} & x_{32} \\ 0& x_{32} & x_{33} \end{pmatrix}\colon x_{11},x_{21},x_{22},x_{32}, x_{33}\in\R\right\},$$
i.e. $P=Z\cap {\rm Sym}_+(3,\R)$. Note that $Z\cap {\rm Sym}_+(3,\R)$ is not a matrix realization
of the cone $P$, so Theorem \ref{main} does not apply {directly} to Wishart families on $Q$.  
Instead, we use the permutation  $(1,3,2)\mapsto (1,2,3)$ and $\hat m$ with $\hat m_{13}=\frac{{m_{21}}m_{32}}{m_{22}}$. 
For example, formula \eqref{form2} gives the following formula proven in \cite{GIMamane}
 \begin{align*}
\V_Q(m)=
\pi\circ\left\{  
\left( \frac{1}{s_1}+\frac{1}{s_3}-\frac{1}{s_2} \right) \rho(\hat m) 
\right. +\left(\frac{1}{s_2}-\frac{1}{s_1}   \right)\rho(\hat m-{M'_1})
\left.+\left(\frac{1}{s_2}-\frac{1}{s_3}   \right)\rho(\hat m-{M'_3})\right\} ,
\end{align*}
{where $M'_1 := \frac{x_{11} x_{22} - x_{21}^2}{x_{22}} E_{11}$ and $M'_3 := \frac{x_{22} x_{33} - x_{32}^2}{x_{22}} E_{33}$.}
Note that for $\us=(p,p,p)$ with $p>\frac12$, the variance function of $F(\RVa)$ on the Vinberg cone is
\begin{equation}\label{pconstant}
\V_p(m)=\frac1p\pi\circ\rho(\hat m).
\end{equation}
\end{example}

\begin{remark}
 A {false} formula for the variance function of a Wishart family on a homogeneous cone 
is announced in  the unpublished article \cite{BH09},  Theorem 4.2,
{see Appendix for more details.} 
\end{remark}
\section{Variance function of Wishart exponential families on \texorpdfstring{$\mathcal{P}_{\mathcal{V}}$}{PV} }\label{VarP}
We are going to find the variance function of the NEF generated by  $\RV$ on the cone $\CV$.
Using the similar approach (see \eqref{VformT}) as in the proof of Theorem \ref{main},
 one can show the following Proposition.

\begin{proposition} Let $\us\in\Xi\cap\R_{>0}^r$. For any $T\in\HV$ one has
$$\V_{F(\RV)}(\rho(T)I_N)=\frac{n_r}{s_r}\rho(T)\rho^\ast(T)+\sum_{k=1}^{r-1} \left(\frac{n_k}{s_{k}}-\frac{n_{k+1}}{s_{k+1}}\right) \rho(T)\rho(J_k)\rho^\ast(T).$$
\end{proposition}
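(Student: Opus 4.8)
The plan is to imitate the proof of Theorem \ref{main}, working on the cone $\CV$ with the power function $\Delta_{-\us}$ in place of $\delta_{-\us}$; by \eqref{equiVar2} of Lemma \ref{LEMequiVar} --- which holds because $F(\RV)$ is $\rho(\HV)$-invariant --- it suffices to compute $\V_{F(\RV)}(I_N)$ and then conjugate by $\rho(T)$, exactly as in \eqref{VformT}. First I would set up the analogue of Proposition \ref{ProPsi}. Since $\us\in\Xi\cap\R^r_{>0}$, Theorem \ref{hyper}(i) gives $\RV\in\mathcal{M}(\ZV)$; moreover, by Theorem \ref{riesz}(i) and the argument at the beginning of Section \ref{SectionInvMean} (now applied to $\delta_{-\us}$, which tends to $+\infty$ at $\partial\CVa$ since $\us\in\R^r_{>0}$) one has $\Theta(\RV)=\CVa$ and $M_{F(\RV)}=\CV$. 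Repeating the computation in the proof of Proposition \ref{ProPsi}, now with the group acting through $\scalar{\xi,\rho(T)I_N}=\scalar{\rho^\ast(T)\xi,I_N}$ and using the multiplicativity \eqref{SmallDel} of $\delta_{\us}$ together with the transitivity of $\rho^\ast(\HV)$ on $\CVa$, one obtains $J_{\RV}(\rho(T)I_N)=\Delta_{-\us}(\rho(T)I_N)\,J_{\RV}(I_N)$, so $J_{\RV}$ equals $\Delta_{-\us}$ up to a positive multiplicative constant. By Lemma \ref{lem:Jmu} the inverse of the mean map of $F(\RV)$ is then $\psi_{\RV}=-(\log\Delta_{-\us})'$ on $\CV$, and by \eqref{defV},
$$\V_{F(\RV)}(m)=-[\psi_{\RV}'(m)]^{-1}=\bigl[(\log\Delta_{-\us})''(m)\bigr]^{-1},\qquad m\in\CV.$$

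Next I would compute $(\log\Delta_{-\us})''$ at the base point $I_N$. The explicit formula \eqref{eqn:Delta_s} gives
$$\log\Delta_{-\us}(x)=-\frac{s_r}{n_r}\log\det x-\sum_{k=1}^{r-1}\Bigl(\frac{s_k}{n_k}-\frac{s_{k+1}}{n_{k+1}}\Bigr)\log\det x_{\{1:k\}},$$
so the standard formula for the second derivative of $\log\det$ yields, for $\alpha,\beta\in\ZV$, the values $D^2_{\alpha,\beta}\log\det x|_{x=I_N}=-\tr(\alpha\beta)=-\scalar{\alpha,\beta}$ and $D^2_{\alpha,\beta}\log\det x_{\{1:k\}}|_{x=I_N}=-\tr(\alpha_{\{1:k\}}\beta_{\{1:k\}})$. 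With $J_k=(I_{N_k})_0\in\ZV$ (so $\PP_k=\rho(J_k^\ast)$, $J_k^\ast=I_N-J_k$, as in Section \ref{Riesz}), cyclicity of the trace gives $\tr(\alpha_{\{1:k\}}\beta_{\{1:k\}})=\tr(J_k\alpha J_k\beta)=\scalar{\alpha,\rho(J_k)\beta}$, where $\rho(J_k)\colon\ZV\to\ZV$, $y\mapsto J_k y J_k$, is the orthogonal projection of $\ZV$ onto the subspace of elements supported in the leading $N_k\times N_k$ block. Hence, as self-adjoint operators on $\ZV$,
$$(\log\Delta_{-\us})''(I_N)=\frac{s_r}{n_r}\,\mathrm{Id}_{\ZV}+\sum_{k=1}^{r-1}\Bigl(\frac{s_k}{n_k}-\frac{s_{k+1}}{n_{k+1}}\Bigr)\rho(J_k).$$

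Finally, I would invert this operator and conjugate. Setting $J_0:=0$ and $J_r:=I_N$, the identities $J_{k-1}J_k=J_{k-1}$ show that the $\rho(J_k)$ are nested orthogonal projections, so $Q_k:=\rho(J_k)-\rho(J_{k-1})$, $k=1,\dots,r$, are mutually orthogonal projections with $\sum_{k=1}^r Q_k=\mathrm{Id}_{\ZV}$; re-telescoping, $(\log\Delta_{-\us})''(I_N)=\sum_{k=1}^r\frac{s_k}{n_k}Q_k$, which is positive definite (all $s_k/n_k>0$) and has inverse $\sum_{k=1}^r\frac{n_k}{s_k}Q_k=\frac{n_r}{s_r}\mathrm{Id}_{\ZV}+\sum_{k=1}^{r-1}\bigl(\frac{n_k}{s_k}-\frac{n_{k+1}}{s_{k+1}}\bigr)\rho(J_k)$. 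This equals $\V_{F(\RV)}(I_N)$, and applying \eqref{equiVar2} with $\rho(I_N)=\mathrm{Id}_{\ZV}$ gives $\V_{F(\RV)}(\rho(T)I_N)=\rho(T)\V_{F(\RV)}(I_N)\rho^\ast(T)$, which is the claimed formula. The only steps carrying real content are the identification $\psi_{\RV}=-(\log\Delta_{-\us})'$ (a line-by-line adaptation of Proposition \ref{ProPsi}) and the trace identity $\tr(\alpha_{\{1:k\}}\beta_{\{1:k\}})=\scalar{\alpha,\rho(J_k)\beta}$ together with the nestedness of the $\rho(J_k)$; the main --- quite mild --- obstacle is keeping the projection and telescoping bookkeeping clean so the inversion of the Hessian is transparent, exactly as in the passage from $\PP_{i-1}-\PP_i$ to $[\psi_{\us}'(I_N)]^{-1}$ in the proof of Theorem \ref{main}.
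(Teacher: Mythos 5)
Your proposal is correct and follows essentially the route the paper intends: use Lemma \ref{lem:Jmu} to identify $\psi_{\RV}=-(\log\Delta_{-\us})'$, compute the Hessian at $I_N$ as a combination of the nested orthogonal projections $\rho(J_k)$ on $\ZV$, invert it by the telescoping/orthogonality argument, and conjugate via the equivariance \eqref{equiVar2}, exactly as in the proof of Theorem \ref{main}. Deriving the Hessian directly from the determinantal formula \eqref{eqn:Delta_s} instead of the $\phi_i$-machinery is precisely the natural ``analogous'' step for $\Delta_{-\us}$, and your computation is sound.
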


\begin{proof}
We have $\Theta(\RV)=\CVa$ and $M_{F(\RV)}=\CV$. By definition,
$m_{\RV}(\theta)=-(\log\delta_{-\us})'(\theta)$, $\theta\in\CVa$. 
Use Lemma \ref{lem:Jmu} to show that $\psi_{\RV}(m)=-(\log\Delta_{-\us})'(m)$, $m\in\CV$ and proceed analogously as in the proof of Theorem \ref{main}.
\end{proof}

Now we will {also use} another approach to this problem. We will use the duality of the cones $\CV$ and $\CVa$ and a matrix realization of $\CVa$. The objective is to boil down to the results of the preceding Section and apply  the formula
for the variance function from Theorem \ref{main}.

Dual cone $\CVa$ to a homogeneous cone is also homogeneous. Thus, due to \cite[Theorem D]{Is06} it admits (under suitable linear isomorphism) a matrix realization.
There exists a family $\widetilde{\VV}=\{\widetilde{\VV}_{lk}\}_{1\leq k<l\leq \tilde{r}}$ satisfying \rm{(V1)-(V3)} and a linear isomorphism $l\colon \mathcal{Z}_{\widetilde{\VV}}\to\ZV$ such that $\CVa=l(\mathcal{P}_{\widetilde{\VV}})$. It can be shown that $\tilde{r}=r$.

Since $l(\mathcal{P}_{\widetilde{\VV}})=\CVa$ and $l$ is an isomorphism, for any $T\in \HV$ there exists a unique $\widetilde{T}\in \mathrm{H}_{\widetilde \VV}$ such that
$$
l({\rho(\widetilde{T})} I_{\tilde{N}}^{\widetilde{\VV}})
 ={\rho^\ast(T)} I_N^{\VV}.
$$
The linear isomorphism $l$ can be taken in such a way that if $T$ has diagonal $(t_{11},\ldots,t_{rr})$ then $\widetilde{T}$ has diagonal $(t_{rr},\ldots,t_{11})$ (see the choice of the permutation $w$ in Proposition \ref{pro:good_permutation}). In such case we have $\chi^{\VV}_{\us}(\widetilde{T})=\chi^{\widetilde{\VV}}_{\us^\ast}(T)$, where $\us^\ast=(s_r,\ldots,s_1)$. This implies the following Proposition
\begin{proposition}\label{pro:mx_rn_Qn}
There exists a family $\widetilde{\VV}=\{\widetilde{\VV}_{lk}\}_{1\leq k<l\leq r}$ satisfying {\rm(V1)-(V3)} and a linear isomorphism $l\colon \mathcal{Z}_{\widetilde{\VV}}\to\ZV$ such that $\CVa=l(\mathcal{P}_{\widetilde{\VV}})$
and \begin{align}\label{strange}
\Delta^{\widetilde{\VV}}_{\us^\ast}(x)=\delta^{\VV}_{\us}(l(x)),\qquad x\in \mathcal{P}_{\widetilde{\VV}}.
\end{align}
In this case, $\us\in\Xi_{\VV}$ if and only if $\us^\ast\in\mathfrak{X}_{\widetilde{\VV}}$. 
\end{proposition}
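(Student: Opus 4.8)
The plan is to establish Proposition \ref{pro:mx_rn_Qn} by combining the known matrix realization theorem for homogeneous cones with a careful choice of the linear isomorphism, controlling how the generalized power functions transform. First I would invoke \cite[Theorem D]{Is06}: since $\CVa$ is a homogeneous cone, there exists a system $\widetilde{\VV}=\{\widetilde{\VV}_{lk}\}_{1\le k<l\le\tilde r}$ satisfying (V1)--(V3) and a linear isomorphism $l_0\colon\mathcal{Z}_{\widetilde{\VV}}\to\ZV$ with $l_0(\mathcal{P}_{\widetilde{\VV}})=\CVa$. The rank $\tilde r$ equals $r$: this can be seen because the rank of a homogeneous cone is an invariant (e.g.\ it is the number of irreducible components in the maximal chain of faces, or it equals the dimension of a maximal abelian subalgebra in the associated $T$-algebra), and $\CV$, $\CVa$ being dual have the same rank. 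Then $\rho(\mathrm{H}_{\widetilde{\VV}})$ acts simply transitively on $\mathcal{P}_{\widetilde{\VV}}$ while $\rho^\ast(\HV)$ acts simply transitively on $\CVa$; transporting the first action through $l_0$ gives a second simply transitive action of a triangular-type group on $\CVa$, and uniqueness of the simply transitive split solvable group up to conjugation lets me adjust $l_0$ so that the induced group isomorphism $\mathrm{H}_{\widetilde{\VV}}\to\rho^\ast(\HV)$ respects the triangular structure, matching diagonal entries in some order.

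Next I would pin down the order of the diagonal entries. The key point is that $\rho^\ast(T)=\pi\circ\rho(T^\top)$ by \eqref{rhos}, so the adjoint action ``reverses'' the flag: passing to $\CVa$ exchanges the roles of the first and last blocks. Concretely, if $T\in\HV$ has diagonal $(t_{11},\dots,t_{rr})$, then there is a unique $\widetilde T\in\mathrm{H}_{\widetilde{\VV}}$ with $l(\rho(\widetilde T)I_{\tilde N}^{\widetilde{\VV}})=\rho^\ast(T)I_N^{\VV}$, and after composing $l_0$ with the linear map induced by the order-reversing permutation $w=(1,r)(2,r-1)\cdots$ of blocks (this is the permutation $w$ referenced in Proposition \ref{pro:good_permutation}), the diagonal of $\widetilde T$ becomes $(t_{rr},\dots,t_{11})$. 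I would verify that this choice is compatible with (V1)--(V3) for the permuted family --- this is essentially the observation that reversing the block order turns a ``lower-triangular'' incidence pattern into another admissible one, which is exactly what Proposition \ref{pro:good_permutation} is set up to provide.

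With the diagonal correspondence $\widetilde T\leftrightarrow T$, $(t_{11},\dots,t_{rr})\mapsto(t_{rr},\dots,t_{11})$ in hand, the identity \eqref{strange} is essentially formal: for $x=\rho(\widetilde T)I_{\tilde N}^{\widetilde{\VV}}\in\mathcal{P}_{\widetilde{\VV}}$ we have $l(x)=\rho^\ast(T)I_N^{\VV}\in\CVa$, hence by the definitions of the generalized power functions,
\[
\Delta^{\widetilde{\VV}}_{\us^\ast}(x)=\chi^{\widetilde{\VV}}_{\us^\ast}(\widetilde T)=\prod_{k=1}^r (t_{rr-k+1,rr-k+1})^{2 s^\ast_k}=\prod_{j=1}^r t_{jj}^{2 s_j}=\chi^{\VV}_{\us}(T)=\delta^{\VV}_{\us}(l(x)),
\]
where I used $\us^\ast=(s_r,\dots,s_1)$ and the reversal of the diagonal to match exponents. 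Finally, the statement about Gindikin--Wallach sets follows from \eqref{strange} and Theorem \ref{riesz}: a Riesz measure $\RV$ on $\CV$ with Laplace transform $\delta^{\VV}_{-\us}$ exists iff $\us\in\Xi_{\VV}$, and by \eqref{strange} (with $-\us$) this Laplace transform, pulled back through $l$, is $\Delta^{\widetilde{\VV}}_{-\us^\ast}$, whose associated Riesz measure $\mathcal{R}^\ast$ on $\mathcal{P}_{\widetilde{\VV}}$ exists iff $\us^\ast\in\mathfrak{X}_{\widetilde{\VV}}$; since a linear isomorphism carries measures with one Laplace transform to measures with the transported Laplace transform, the two existence conditions are equivalent. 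The main obstacle is the second paragraph: rigorously producing an $l$ that not only realizes $\CVa$ as some $\mathcal{P}_{\widetilde{\VV}}$ but does so with the precise order-reversing diagonal matching. This requires either a direct construction of $\widetilde{\VV}$ from $\VV$ (reversing the block order and dualizing each $\VV_{lk}$ appropriately) together with an explicit $l$, or an appeal to uniqueness of simply transitive triangular group actions; I expect the paper handles this via the explicit permutation $w$ of Proposition \ref{pro:good_permutation}, which is why I would defer the fine structural details to that result.
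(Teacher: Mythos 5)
Your proposal follows essentially the same route as the paper: it likewise obtains the realization from \cite[Theorem D]{Is06}, matches each $T\in\HV$ with the unique $\widetilde T\in \mathrm{H}_{\widetilde\VV}$ via the simply transitive actions, defers the order-reversing diagonal matching to the permutation $w$ of Proposition \ref{pro:good_permutation}, and then obtains \eqref{strange} by the same formal computation with $\chi^{\widetilde{\VV}}_{\us^\ast}(\widetilde T)=\chi^{\VV}_{\us}(T)$. Your added justifications ($\tilde r=r$ via rank invariance, and the Gindikin--Wallach equivalence by transporting the Riesz measures through the adjoint of $l$ and invoking Theorem \ref{riesz}) correctly fill in details the paper leaves implicit.
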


The adjoint map $l^\ast\colon\ZV\to\mathcal{Z}_{\widetilde{\VV}}^\ast\equiv \mathcal{Z}_{\widetilde{\VV}}$ is a linear isomorphism such that $l^\ast(\CV)=\mathcal{Q}_{\widetilde{\VV}}$
and
$$\Delta^{\VV}_{\us}(x)=\delta^{\widetilde{\VV}}_{\us^\ast}(l^\ast(x)),\qquad x\in \CV.$$
Consider the Riesz measure $\RV$ on $\CV$. Then for any $\xi\in\CVa$
$$L_{\RV}(\xi)=\delta^{\VV}_{-\us}(\xi)=\Delta^{\widetilde{\VV}}_{-\us^\ast}((l^\ast)^{-1}(\xi))=L_{\mathcal{R}_{\us^\ast}^\ast}((l^\ast)^{-1}(\xi)),$$
where $L_{\mathcal{R}_{\us^\ast}^\ast}$ is the Laplace transform of the Riesz measure on $\mathcal{P}_{\widetilde{V}}$. 
We have proven the following
\begin{theorem}\label{mainP}
Let $\us\in\Xi\cap\R_{>0}^r$. Then the variance function of $F(\RV)$ is given by 
$$\V_{F(\RV)}({\theta})=(l^\ast)^{-1}\circ \V_{F(\mathcal{R}_{\us^\ast}^\ast)}(l^\ast({\theta}))\circ l^{-1},\qquad \theta\in M_{F(\RV)}=\CV.$$
\end{theorem}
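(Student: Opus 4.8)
The plan is to transport the whole setup from the cone $\CV$ to the cone $\mathcal{P}_{\widetilde{\VV}}$ via the linear isomorphism $l^\ast$, and then read off the variance function from the functorial behaviour of natural exponential families under linear maps. First I would recall the general principle: if $\mu$ is a measure in $\mathcal{M}(\E)$ and $A\colon\E\to\E'$ is a linear isomorphism, then the pushforward $A_\ast\mu$ lies in $\mathcal{M}(\E')$, its Laplace transform satisfies $L_{A_\ast\mu}(\theta')=L_\mu(A^\ast\theta')$, and consequently $\Theta(A_\ast\mu)=(A^\ast)^{-1}\Theta(\mu)$, $M_{F(A_\ast\mu)}=A\,M_{F(\mu)}$, the mean maps are related by $m_{A_\ast\mu}=A\circ m_\mu\circ A^\ast$, the inverse mean maps by $\psi_{A_\ast\mu}=(A^\ast)^{-1}\circ\psi_\mu\circ A^{-1}$, and finally the variance functions by
$$
\V_{F(A_\ast\mu)}(m')=A\circ\V_{F(\mu)}(A^{-1}m')\circ A^\ast,\qquad m'\in M_{F(A_\ast\mu)}.
$$
This last identity is immediate from \eqref{defV} together with the chain rule applied to $\psi_{A_\ast\mu}=(A^\ast)^{-1}\circ\psi_\mu\circ A^{-1}$.

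Next I would identify the relevant measure and isomorphism. By the computation preceding the statement, for $\xi\in\CVa$ one has $L_{\RV}(\xi)=L_{\mathcal{R}_{\us^\ast}^\ast}\bigl((l^\ast)^{-1}(\xi)\bigr)$, where $\mathcal{R}_{\us^\ast}^\ast$ is the Riesz measure on $\mathcal{P}_{\widetilde{\VV}}$ with Laplace transform $\Delta^{\widetilde{\VV}}_{-\us^\ast}$. Setting $A:=(l^\ast)^{-1}\colon\ZV\to\mathcal{Z}_{\widetilde{\VV}}$, so that $A^\ast=l^{-1}$, the displayed Laplace-transform identity says exactly that $L_{\RV}=L_{\mathcal{R}_{\us^\ast}^\ast}\circ A^\ast$, i.e. $\RV=A_\ast\,\bigl(\text{image measure}\bigr)$ — more precisely $\mathcal{R}_{\us^\ast}^\ast=A_\ast\RV$, equivalently $\RV=(A^{-1})_\ast\mathcal{R}_{\us^\ast}^\ast=(l^\ast)_\ast\mathcal{R}_{\us^\ast}^\ast$. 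One also needs $\RV\in\mathcal{M}(\ZV)$, which holds by Theorem \ref{hyper}(i) since $\us\in\Xi\cap\R_{>0}^r$; by Proposition \ref{pro:mx_rn_Qn} this is equivalent to $\us^\ast\in\mathfrak{X}_{\widetilde{\VV}}\cap\R^r_{>0}$, so $\mathcal{R}_{\us^\ast}^\ast\in\mathcal{M}(\mathcal{Z}_{\widetilde{\VV}})$ as well and $\V_{F(\mathcal{R}_{\us^\ast}^\ast)}$ is given by Theorem \ref{main}.

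Finally I would apply the general principle with $\mu=\mathcal{R}_{\us^\ast}^\ast$ on $\mathcal{Z}_{\widetilde{\VV}}$ and $A^{-1}=l^\ast\colon\ZV\to\mathcal{Z}_{\widetilde{\VV}}$, so that $(A^{-1})_\ast\mu=\RV$. The variance-function transformation rule then reads
$$
\V_{F(\RV)}(\theta)=(A^{-1})\circ\V_{F(\mathcal{R}_{\us^\ast}^\ast)}\bigl((A^{-1})^{-1}\theta\bigr)\circ(A^{-1})^\ast
=(l^\ast)^{-1}\circ\V_{F(\mathcal{R}_{\us^\ast}^\ast)}\bigl(l^\ast(\theta)\bigr)\circ l^{-1},
$$
using $(A^{-1})^\ast=(l^\ast)^\ast=l$, wait — here I must be careful with the identification $\mathcal{Z}_{\widetilde{\VV}}\equiv\mathcal{Z}_{\widetilde{\VV}}^\ast$ and $\ZV\equiv\ZV^\ast$ via the trace inner products, under which $(l^\ast)^\ast=l$; tracking this identification correctly is the one genuinely delicate point. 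Once it is pinned down, the domain statement $M_{F(\RV)}=\CV$ follows from $M_{F(\mathcal{R}_{\us^\ast}^\ast)}=\mathcal{P}_{\widetilde{\VV}}$ (Theorem \ref{riesz} plus the argument in Section \ref{SectionInvMean}) together with $l^\ast(\CV)=\mathcal{Q}_{\widetilde{\VV}}$ — hold on, that gives $\mathcal{Q}_{\widetilde{\VV}}$, not $\mathcal{P}_{\widetilde{\VV}}$; the correct bookkeeping is that $l(\mathcal{P}_{\widetilde{\VV}})=\CVa$ forces $(l^\ast)^{-1}(\CVa)=\mathcal{P}_{\widetilde{\VV}}$, hence $l^\ast$ carries the mean domain of $F(\RV)$, which is $M_{F(\RV)}=\CV$, to the mean domain $M_{F(\mathcal{R}_{\us^\ast}^\ast)}$ in $\mathcal{P}_{\widetilde{\VV}}$; verifying these dualities consistently is exactly the main obstacle, and everything else is a formal consequence of the chain rule.
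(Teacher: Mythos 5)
Your strategy is the same as the paper's: the Laplace--transform identity displayed before the statement shows that $\RV$ and $\mathcal{R}_{\us^\ast}^\ast$ are images of one another under a linear isomorphism, and the theorem is then the standard transformation rule for NEFs under linear isomorphisms, which you state correctly (the paper leaves exactly this functorial step implicit). So the idea is right; the problem is that the point you yourself flag as ``the one genuinely delicate point'' is resolved incorrectly, and with your choice of $A$ the final formula does not actually follow from your substitutions.

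Concretely: the map carrying the support cone of $\RV$ to that of $\mathcal{R}_{\us^\ast}^\ast$ is $l^\ast\colon\ZV\to\mathcal{Z}_{\widetilde{\VV}}$, so the correct relation is $\mathcal{R}_{\us^\ast}^\ast=(l^\ast)_\ast\RV$, equivalently $\RV=\bigl((l^\ast)^{-1}\bigr)_\ast\mathcal{R}_{\us^\ast}^\ast$ with $(l^\ast)^{-1}\colon\mathcal{Z}_{\widetilde{\VV}}\to\ZV$; your line ``$\RV=(l^\ast)_\ast\mathcal{R}_{\us^\ast}^\ast$'' inverts this, and your $A:=(l^\ast)^{-1}$ is given the wrong domain and codomain. (The identity before the theorem should be read as $L_{\RV}(\xi)=L_{\mathcal{R}_{\us^\ast}^\ast}(l^{-1}\xi)$ for $\xi\in\CVa$, since $\mathcal{P}_{\widetilde{\VV}}=l^{-1}(\CVa)$; indeed $L_{(l^\ast)_\ast\RV}(\eta)=L_{\RV}(l\eta)=\Delta^{\widetilde{\VV}}_{-\us^\ast}(\eta)$ for $\eta\in\mathcal{P}_{\widetilde{\VV}}$ by \eqref{strange}.) Moreover, in your closing paragraph you ``correct'' your first, right, instinct into a wrong statement: for the system $\widetilde{\VV}$ one has $\Theta(\mathcal{R}_{\us^\ast}^\ast)=\mathcal{P}_{\widetilde{\VV}}$ and $M_{F(\mathcal{R}_{\us^\ast}^\ast)}=\mathcal{Q}_{\widetilde{\VV}}$ (the argument of Section \ref{SectionInvMean}, and Theorem \ref{main} evaluates $\V_{F(\mathcal{R}_{\us^\ast}^\ast)}$ on the dual cone), and it is precisely $l^\ast(\CV)=\mathcal{Q}_{\widetilde{\VV}}$ that makes $\V_{F(\mathcal{R}_{\us^\ast}^\ast)}(l^\ast\theta)$ well defined; also ``$(l^\ast)^{-1}(\CVa)=\mathcal{P}_{\widetilde{\VV}}$'' does not type--check, the correct statement being $l^{-1}(\CVa)=\mathcal{P}_{\widetilde{\VV}}$. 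The repair is short: apply your transformation rule with $A:=(l^\ast)^{-1}\colon\mathcal{Z}_{\widetilde{\VV}}\to\ZV$ correctly oriented, so that $A_\ast\mathcal{R}_{\us^\ast}^\ast=\RV$, $A^{-1}=l^\ast$ and $A^\ast=l^{-1}$ because $(l^\ast)^\ast=l$ under the trace--inner--product identifications; then
\begin{equation*}
\V_{F(\RV)}(\theta)=A\circ\V_{F(\mathcal{R}_{\us^\ast}^\ast)}\bigl(A^{-1}\theta\bigr)\circ A^\ast
=(l^\ast)^{-1}\circ\V_{F(\mathcal{R}_{\us^\ast}^\ast)}\bigl(l^\ast(\theta)\bigr)\circ l^{-1},
\end{equation*}
and $M_{F(\RV)}=A\bigl(M_{F(\mathcal{R}_{\us^\ast}^\ast)}\bigr)=(l^\ast)^{-1}(\mathcal{Q}_{\widetilde{\VV}})=\CV$, as claimed.
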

Here $\V_{F(\mathcal{R}_{\us^\ast}^\ast)}$ is the variance function of the Riesz measure defined on $\mathcal{P}_{\widetilde{\VV}}$, which can be written using Theorem \ref{main}.
The drawback of this result is that the map $l$ and so $l^\ast$ is generically not explicit. 
In the second part of this Section we propose a practical construction
	of a  matrix realization of the cone $\CVa$. Consequently, the maps 
 $l$ and  $l^\ast$ will be available and Theorem \ref{mainP} will become useful in statistical practice.

 \subsection{Matrix realization of the cone $\CVa$.}
One general way to get a matrix realization of the cone $\CVa$ as $\mathcal{P}_{\widetilde{\VV}}$ is as follows.
Recall that $m_i=\dim W_i$, which was defined in Section \ref{QUADR}. Noting that $\sum_{i=1}^r m_i$ equals the dimension $d$ of the cone $\CVa$,
we define a linear map 
$$ \Phi : \mathcal{Z}_{\VV} \owns \xi \mapsto
 \begin{pmatrix}
\phi_1(\xi)&&&\\&\phi_2(\xi)&&\\&&\ddots&\\&&&\phi_r(\xi) \end{pmatrix} \in \mathrm{Sym}(d,\R),
$$
and put
$$
 \mathcal{A}_{\VV} := \set{\Phi(I_N)^{-1/2} \Phi(\xi) \Phi(I_N)^{-1/2}}{\xi \in \mathcal{Z}_{\VV}}.
$$
\begin{proposition} \label{pro:dual_matrix_realization}
By an appropriate permutation of rows and columns,
 the subspace $\mathcal{A}_{\VV}$ of $\mathrm{Sym}(d,\R)$ gives a matrix realization of $\CVa$, 
thus we have $l^{-1}=\rho(w\, \Phi(I_d)^{-1/2})\circ\Phi$ for some permutation matrix $w \in \GL(d,\R)$.
\end{proposition}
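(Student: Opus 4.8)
The goal is to show that the map $\Phi$ composed with an appropriate normalization realizes $\CVa$ as a matrix cone $\mathcal{P}_{\widetilde{\VV}}$. The essential point is that $\Phi$ is injective and that its image, after conjugating so that $\Phi(I_N)$ becomes the identity, is a subspace of the form $\mathcal{Z}_{\widetilde{\VV}}$ (up to a permutation of coordinates) for some system $\widetilde{\VV}$ satisfying (V1)--(V3), in such a way that the cone condition matches. First I would check injectivity of $\Phi$: by Proposition \ref{proPhi}(i), the entry $\phi_r(\xi) = n_r\xi_{rr}$ recovers $\xi_{rr}$, and the off-diagonal vectors $v_i(\xi)$ appearing in $\phi_i(\xi)$ recover all the blocks $\xi_{li}$ for $l > i$; running $i$ from $r$ down to $1$ recovers every block of $\xi$, so $\Phi$ is injective and $\dim \mathcal{A}_{\VV} = \dim \mathcal{Z}_{\VV} = d = \sum_i m_i$.

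Next I would identify the block structure. The target $\mathrm{Sym}(d,\R)$ is indexed by the disjoint union of index sets of sizes $m_1,\dots,m_r$; within the $i$-th block, split off the first coordinate (corresponding to $\VV_{ii}\simeq\R$) from the remaining $m_i-1$ (corresponding to $\VV_{i+1,i},\dots,\VV_{ri}$), exactly as in the decomposition $W_i \simeq \R \oplus \widecheck{W}_i$. Collecting the $r$ "first coordinates" as the new diagonal blocks and the copies of $\vc(\xi_{li})$ as the new off-diagonal blocks, a permutation $w$ of the $d$ coordinates turns $\mathcal{A}_{\VV}$ into a subspace of the standard form $\mathcal{Z}_{\widetilde{\VV}}$, where $\widetilde{\VV}$ is read off from the coupling between blocks dictated by Proposition \ref{proPhi}(i). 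One then verifies that this $\widetilde{\VV}$ satisfies (V1)--(V3): this is where the compatibility conditions (V1)--(V3) for the original $\VV$ and the bilinear identities of Proposition \ref{proPhi}(iv) get used — the products of blocks in $\widetilde{\VV}$ land in the prescribed spaces precisely because $\phi_i$ is built from the quadratic map $q_i$. Here one should choose the permutation $w$ compatibly with the reversal $\us\mapsto\us^\ast$ already flagged before Proposition \ref{pro:mx_rn_Qn}, i.e. the one making diagonals reverse order; I would point to \ref{pro:good_permutation} for the precise choice.

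Finally I would check the cone statement itself. After the conjugation by $\Phi(I_N)^{-1/2}$ and the permutation $w$, the element $\rho(w\,\Phi(I_N)^{-1/2})(\Phi(\xi))$ for $\xi = \rho^\ast(T)I_N \in \CVa$ should equal $\rho(\widetilde{T})I_{\tilde N}^{\widetilde\VV}$ for the corresponding $\widetilde T \in \mathrm{H}_{\widetilde\VV}$; this follows from Proposition \ref{proPhi}(ii), which says $\phi_i$ intertwines $\rho^\ast(T)$ with multiplication by a character, i.e. $\phi_i(\rho^\ast(T)I_N) = \phi_i(I_N)^{1/2}\,(\text{lower-triangular})\,(\text{lower-triangular})^\top\,\phi_i(I_N)^{1/2}$ after suitable bookkeeping. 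Since $\rho^\ast(\HV)$ acts transitively on $\CVa$ and $\rho(\mathrm{H}_{\widetilde\VV})$ acts transitively on $\mathcal{P}_{\widetilde\VV}$, matching the base points $I_N \mapsto I_d$ and the group actions forces $\mathcal{A}_{\VV}$ (permuted) to be exactly $\mathcal{P}_{\widetilde\VV}$, and the formula $l^{-1} = \rho(w\,\Phi(I_N)^{-1/2})\circ\Phi$ drops out. The main obstacle I anticipate is the bookkeeping in the second step: verifying (V1)--(V3) for $\widetilde\VV$ and pinning down the permutation $w$ so that everything is consistent with the earlier conventions — the conceptual content is light, but getting the indices of the blocks $\widetilde\VV_{lk}$ right (and seeing that the cross-terms coming from different $\phi_i$'s do not interfere, which is where block-diagonality of $\Phi$ is essential) requires care.
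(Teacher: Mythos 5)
Your plan gets the easy parts right (injectivity of $\Phi$, the dimension count, the normalization by $\Phi(I_N)^{-1/2}$, the idea that a triangular group action must be transported), but the central step is exactly the one you dispose of as ``bookkeeping'': showing that, after the permutation, the linear image of $\ZV$ under $\xi\mapsto\rho(w\,\Phi(I_N)^{-1/2})\Phi(\xi)$ is a space of the form $\mathcal{Z}_{\widetilde{\VV}}$ with $\widetilde{\VV}$ satisfying (V1)--(V3). Proposition \ref{proPhi}(i) only describes the first row and column of each diagonal block $\phi_i(\xi)$; it says nothing about how the blocks $\xi_{lk}$ with $i<k\le l$ enter $\widecheck{\phi}_i(\xi)$, and the same entries of $\xi$ recur in several of the blocks $\phi_1(\xi),\ldots,\phi_r(\xi)$, so it is not at all evident that the image decouples, after a permutation, into independent off-diagonal blocks ranging over subspaces $\widetilde{\VV}_{lk}$, let alone that these subspaces satisfy the multiplicative conditions (V1)--(V2). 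Proposition \ref{proPhi}(iv) is a trace identity used for the variance function and carries no information about such products. The paper's proof is built precisely to supply this missing structure: it shows that $\mathcal{A}^0_{\VV}=\rho(w_0)\mathcal{A}_{\VV}$ is a subalgebra containing $I_d$ of the non-associative algebra $(\mathrm{Sym}(d,\R),\triangle)$, $X\triangle Y=\underline{X}Y+Y\underline{X}^\top$, via a Lie-theoretic argument (the intertwining $\Phi(\rho^\ast(T)\xi)=\rho(\widetilde{T})\Phi(\xi)$ with block upper-triangular $\widetilde{T}$, the openness of the $\rho(\mathcal{HA}^0_{\VV})$-orbit through $I_d$, and the identification $\mathrm{Lie}(\mathcal{HA}^0_{\VV})=\{\underline{X}\colon X\in\mathcal{A}^0_{\VV}\}$), and then invokes the nontrivial structure theorem \cite[Theorem 2]{Is15}, which says that any such subalgebra is, up to a permutation, of the form $\mathcal{Z}_{\widetilde{\VV}}$ with (V1)--(V3). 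Your proposal contains no substitute for either ingredient; asserting that $\widetilde{\VV}$ can be ``read off'' and checked is essentially restating the proposition.

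Two secondary points. First, the intertwining you invoke in your last step does not follow from Proposition \ref{proPhi}(ii), which is a determinant identity; in the paper it is derived directly from Definition \ref{defi:phi_i} and the invariance $TW_i\subset W_i$, giving $\phi_i(\rho^\ast(T)\xi)=\sigma_i(T)^\top\phi_i(\xi)\sigma_i(T)$ with $\sigma_i(T)$ lower triangular. Second, to conclude that the permuted image of $\CVa$ is exactly $\mathcal{P}_{\widetilde{\VV}}=\mathcal{Z}_{\widetilde{\VV}}\cap\mathrm{Sym}_+(d,\R)$ one needs that $\Phi(\xi)$ is positive definite if and only if $\xi\in\CVa$ (the paper cites \cite[Proposition 3.1 (iv)]{GI14} for this); transitivity of the two triangular group actions alone does not give it unless you also identify the transported copy of $\HV$ inside $\mathrm{H}_{\widetilde{\VV}}$ and its orbit, which is again part of what has to be proved rather than assumed.
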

\begin{proof}
We note that $\Phi(\xi)$ is positive definite if and only if $\xi \in \CVa$.
Indeed, the 'only if' part follows from the fact that $\xi \in \CVa$ is characterized by the positivity of $\det \phi_i(\xi)$
 for all $i=1, \ldots, r$ (see \cite[Proposition 3.1 (iv)]{GI14}).
To show Proposition \ref{pro:dual_matrix_realization},
 we shall introduce an algebra structure on the space $\mathrm{Sym}(N,\R)$ of symmetric matrices,
 and apply \cite[Theorem 2]{Is15}.
For $X \in \mathrm{Sym}(N,\R)$,
 let $\underline{X}$ be {the} unique lower triangular matrix for which $X = \underline{X} + \underline{X}^\top$.
For $X, Y \in \mathrm{Sym}(N,\R)$, define
$$
 X \triangle Y := \underline{X} Y + Y \underline{X}^\top \in \mathrm{Sym}(N,\R).
$$ 
It is not difficult to see that the conditions (V1) -- (V3) are satisfied
 if and only if the space $\ZV \subset \mathrm{Sym}(N,\R)$ forms a subalgebra of $(\mathrm{Sym}(N,\R), \triangle)$,
 that is, $X \triangle Y \in \ZV$ for all $X, Y \in \ZV$.
On the other hand,  \cite[Theorem 2]{Is15} states that,
 if $\mathcal{A} \subset \mathrm{Sym}(N, \R)$ is a subalgebra of $(\mathrm{Sym}(N,\R), \triangle)$ containing $I_N$,
 then there exists a permutation matrix $w \in \GL(N,\R)$ such that $\rho(w)\mathcal{A} = \set{\rho(w) X}{X \in \mathcal{A}}$
 is of the form $\mathcal{Z}_{\widetilde{\VV}}$ with some $\widetilde{\VV} = \{ \widetilde{\VV}_{lk} \}$. 
Therefore, for the proof of Proposition \ref{pro:dual_matrix_realization},
 it suffices to show that $\mathcal{A}^0_{\VV} := \set{ \rho(w_0) X}{X \in \mathcal{A}_{\VV}}$ is a subalgebra of 
  $(\mathrm{Sym}(d,\R), \triangle)$ containing $I_d$,
 where $w_0$  is the anti-diagonal matrix corresponding
 to the permutation $\begin{pmatrix} 1 & 2 & \cdots & d \\ d & d-1 & \cdots & 1 \end{pmatrix}$.
Clearly $I_d \in \mathcal{A}^0_{\VV}$ because $\Phi(I_{{N}})^{-1/2} \Phi(\xi) \Phi(I_{{N}})^{-1/2} = I_d$ with $\xi = I_N$.

For $T \in \HV$ and $x \in W_i$,
 then $T x \in W_i$ by {\rm(V1)}.
Thus, for each $T \in \HV$,
 there exists $\sigma_i(T) \in \GL(m_i, \R)$ such that $$\vc(Tx) = \sigma_i(T) \vc(x)$$ for all $x \in W_i$.
Here $\sigma_i(T)$ is a lower triangular matrix.
By Definition \ref{defi:phi_i}, we have for $\xi \in \ZV$
\begin{align*}
 \vc(x)^\top \phi_i(\rho^\ast(T)\xi) \vc(x)
 &= \scalar{\rho^\ast(T)\xi,q_i(x)} 
 = \scalar{\xi,\rho(T)q_i(x)} \\
 &= \scalar{\xi,q_i(Tx)}
 = \vc(Tx)^\top \phi_i(\xi)\vc(Tx)\\
 &= \vc(x)^\top \sigma_i(T)^\top \phi_i(\xi) \sigma_i(T)\vc(x),
\end{align*}
so that
$$
 \phi_i(\rho^\ast(T)\xi) = \sigma_i(T)^\top \phi_i(\xi) \sigma_i(T).
$$
For $T \in \HV$, we write
$$
\widetilde{T} := \begin{pmatrix} \sigma_1(T)^\top & & & \\ & \sigma_2(T)^\top & & \\ & & \ddots & \\ & & & \sigma_r(T)^\top \end{pmatrix}.
$$
Then $\widetilde{T}$ is an upper triangular matrix and we have 
\begin{equation}
 \Phi( \rho^\ast(T) \xi) = \widetilde{T} \Phi(\xi) \widetilde{T}^\top = \rho(\widetilde{T}) \Phi(\xi)
\qquad (T,\xi)\in \HV\times\ZV.
\end{equation}
Let $\mathcal{HA}_{\VV}$ be the set $\set{\Phi(I_N)^{-1/2}\tilde{T}\Phi(I_N)^{1/2} }{T \in \HV}$.
Then $\mathcal{HA}_{\VV}$ forms a Lie group and we have
$$
 \rho(S) X \in \mathcal{A}_{\VV}, \qquad (S,X) \in \mathcal{HA}_{\VV}\times \mathcal{A}_{\VV}.
$$
We observe that
 $$ \phi_i(I_N) = \begin{pmatrix} n_i & & & \\ & n_{i+1} I_{\dim \VV_{i+1, i}} & & \\ & & \ddots & \\& & &  n_r I_{\dim \VV_{r i}} \end{pmatrix}, $$
 so that $\Phi(I_N)$ is a diagonal matrix.
Thus, elements of $\mathcal{HA}_{\VV}$ are upper triangular matrices.
It follows that $\mathcal{HA}^0_{\VV} := \set{w_0 S w_0^{-1}}{S \in \mathcal{HA}_{\VV}}$ is a Lie group whose elements are lower triangular matrices
 and we have
$$
 \rho(S) X \in \mathcal{A}^0_{\VV}, \qquad (S,X)\in \mathcal{HA}^0_{\VV}\times\mathcal{A}^0_{\VV}.
$$ 
The $\rho(\mathcal{HA}^0_{\VV})$-orbit through $I_d$ coincides with the image of $\CVa \subset \ZV$
 by the linear map $\xi \mapsto \rho(w_0 \Phi(I_N)^{-1/2}) \Phi(\xi)$, so it is an open orbit.
Let $\mathrm{Lie}(\mathcal{HA}^0_{\VV})$ be the Lie algebra of the Lie group $\mathcal{HA}^0_{\VV}$.
Then $L \in \mathrm{Lie}(\mathcal{HA}^0_{\VV})$ is a lower triangular matrix, and the infinitesimal action is given by
 $\dot{\rho}(L) Y = L Y + Y L^\top \in \mathcal{A}^0_{\VV}$ for $Y \in  \mathcal{A}^0_{\VV}$.
In particular, we have a linear isomorphism
$$
 \mathrm{Lie}(\mathcal{HA}^0_{\VV}) \owns L \mapsto \dot{\rho}(L)I_d = L + L^\top \in \mathcal{A}^0_{\VV}, 
$$ 
 which implies that
$\mathrm{Lie}(\mathcal{HA}^0_{\VV}) = \set{\underline{X}}{X \in \mathcal{A}^0_{\VV}}$.
Therefore, for $X, Y \in \mathcal{A}^0_{\VV}$, we have
 $$
 X \triangle Y = \dot{\rho}(\underline{X}) Y \in \mathcal{A}^0_{\VV},
 $$ 
 so that $\mathcal{A}^0_{\VV}$ is a subalgebra of $(\mathrm{Sym}(d,\R), \triangle)$.
Hence Proposition \ref{pro:dual_matrix_realization} is verified.
\end{proof}

The permutation giving a matrix realization in Proposition \ref{pro:dual_matrix_realization} is not unique.
We shall present a practical method of finding such a permutation.
For $k=1, \ldots r$, we put $\nu_k := 1 + \sum_{i<k} \dim \VV_{ki}$.

\begin{proposition} \label{pro:good_permutation}
Let $w \in \GL(d,\R)$ be a permutation matrix such that
\begin{equation} \label{eqn:diag_assumption}
 \rho(w\, \Phi(I_N)^{-1/2})\Phi(\xi) 
 = \begin{pmatrix} \xi_{rr} I_{\nu_r} & & & \\ & \xi_{r-1,r-1} I_{\nu_{r-1}} & & \\ & & \ddots & \\ & & & \xi_{11} I_{\nu_1} \end{pmatrix} 
\end{equation}
holds for any diagonal $\xi \in \ZV$.
Then $\rho(w) \mathcal{A}_{\VV} = \mathcal{Z}_{\widetilde{\VV}}$
 with some vector spaces $\widetilde{\VV}_{lk} \subset \mathrm{Mat}(\nu_l, \nu_k,\R)\,\,\,(1 \le k < l \le r)$, which satisfy {\rm (V1)--(V3)}.
\end{proposition}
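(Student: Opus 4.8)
The plan is to rerun the argument in the proof of Proposition~\ref{pro:dual_matrix_realization} with the explicit permutation $w$ in place of the anti-diagonal $w_0$, after first describing the diagonal part of $\mathcal{A}_{\VV}$. Recall from that proof that $\Phi(I_N)$ is a diagonal matrix, so conjugation by $\Phi(I_N)^{\pm1/2}$ preserves diagonality; and by Proposition~\ref{proPhi}(i) the off-diagonal entries of $\phi_i(\xi)$ vanish exactly when $v_i(\xi)=0$, i.e. when $\xi_{li}=0$ for all $l>i$. Hence $\Phi(\xi)$ is diagonal if and only if $\xi$ is diagonal, and for diagonal $\xi$ the value $n_l\xi_{ll}$ occurs in $\Phi(\xi)$ exactly $\nu_l=1+\sum_{i<l}\dim\VV_{li}$ times (once as the $(1,1)$-entry of $\phi_l$, and $\dim\VV_{li}$ times inside $\phi_i$ for each $i<l$). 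Consequently the space of diagonal elements of $\mathcal{A}_{\VV}$ is $r$-dimensional, a permutation matrix $w$ as in \eqref{eqn:diag_assumption} exists, and after conjugation by $w$ that space becomes the block-scalar space $\set{x_rI_{\nu_r}\oplus\cdots\oplus x_1I_{\nu_1}}{x_1,\ldots,x_r\in\R}$; in particular the primitive idempotents of $\rho(w)\mathcal{A}_{\VV}$ are the standard block projections for the partition $(\nu_r,\ldots,\nu_1)$.

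The key step is to check that $\rho(w)\mathcal{A}_{\VV}$ is a $\triangle$-subalgebra of $(\mathrm{Sym}(d,\R),\triangle)$ containing $I_d$. For this I would show that $\mathcal{G}:=w\,\mathcal{HA}_{\VV}\,w^{-1}$ consists of lower triangular matrices. Indeed $\widetilde{T}$ is block diagonal with $i$-th block $\sigma_i(T)^\top$, which is upper triangular with respect to the ordering $\VV_{ii},\VV_{i+1,i},\ldots,\VV_{ri}$ of the coordinates of $W_i$; thus the only nonzero entries of $\widetilde{T}$ join a coordinate ``of type $\VV_{li}$'' to one ``of type $\VV_{l'i}$'' with $l\le l'$, inside the same block $i$. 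Since $w$ orders the coordinates primarily by the index $l$ in decreasing order, which is forced by \eqref{eqn:diag_assumption}, each entry with $l<l'$ lands strictly below the diagonal of $w\widetilde{T}w^{-1}$ while those with $l=l'$ lie on the diagonal, so $w\widetilde{T}w^{-1}$ is lower triangular; conjugating further by the diagonal matrix $\Phi(I_N)^{\pm1/2}$ keeps it so. Then, exactly as in the proof of Proposition~\ref{pro:dual_matrix_realization}, the group $\mathcal{G}$ preserves $\rho(w)\mathcal{A}_{\VV}$ and acts transitively on the open orbit through $I_d$ (the image of $\CVa$ under $\xi\mapsto\rho(w\Phi(I_N)^{-1/2})\Phi(\xi)$, which is open in $\rho(w)\mathcal{A}_{\VV}$), whence $\mathrm{Lie}(\mathcal{G})=\set{\underline{X}}{X\in\rho(w)\mathcal{A}_{\VV}}$ by a dimension count, and therefore $X\triangle Y=\dot{\rho}(\underline{X})Y\in\rho(w)\mathcal{A}_{\VV}$ for all $X,Y\in\rho(w)\mathcal{A}_{\VV}$.

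Finally, $\rho(w)\mathcal{A}_{\VV}$ is a $\triangle$-subalgebra of $(\mathrm{Sym}(d,\R),\triangle)$ containing $I_d$ whose frame of primitive idempotents is already the standard one, with block sizes $(\nu_r,\ldots,\nu_1)$; the construction underlying \cite[Theorem 2]{Is15} decomposes such a subalgebra along this frame and so produces $\mathcal{Z}_{\widetilde{\VV}}$ directly, with $\widetilde{\VV}_{lk}\subseteq\mathrm{Mat}(\nu_l,\nu_k,\R)$ satisfying (V1)--(V3). I expect this last point — that the permutation furnished by \cite[Theorem 2]{Is15} may be taken to be the identity once the idempotents have been presorted by $w$ — to be the main thing requiring care. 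If one wishes to invoke only the bare existence statement of \cite[Theorem 2]{Is15}, the diagonal computation above equally pins down the block sizes of the realization obtained in Proposition~\ref{pro:dual_matrix_realization} to be $(\nu_r,\ldots,\nu_1)$, so any permutation satisfying \eqref{eqn:diag_assumption} differs from it by one preserving each $\nu$-block; conjugating $\mathcal{Z}_{\widetilde{\VV}}$ by such a block-preserving permutation keeps the diagonal blocks scalar and replaces each $\widetilde{\VV}_{lk}$ by $P_l\widetilde{\VV}_{lk}P_k^\top$ with $P_l,P_k$ permutation matrices, which visibly preserves (V1)--(V3), and the proof concludes.
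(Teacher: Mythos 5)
Your preliminary analysis is correct, and in one respect finer than the paper's: for diagonal $\xi$ the matrix $\Phi(\xi)$ is indeed diagonal, with the normalized value $\xi_{ll}$ occurring exactly $\nu_l$ times, so a permutation $w$ as in \eqref{eqn:diag_assumption} must sort the coordinates by the label $l$ in decreasing order; and your observation that for \emph{any} such $w$ the matrices $w\widetilde{T}w^{-1}$ are lower triangular (the nonzero entries of $\sigma_i(T)^\top$ join labels $l\le l'$ inside the same block $i$, and are diagonal when $l=l'$) does let you rerun the proof of Proposition \ref{pro:dual_matrix_realization} with $w$ in place of $w_0$ and conclude that $\rho(w)\mathcal{A}_{\VV}$ is a $\triangle$-subalgebra of $(\mathrm{Sym}(d,\R),\triangle)$ containing $I_d$. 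Your closing reduction --- conjugation by a permutation preserving each $\nu$-block replaces $\widetilde{\VV}_{lk}$ by $u_l\widetilde{\VV}_{lk}u_k^\top$ and manifestly preserves (V1)--(V3) --- is exactly the second half of the paper's proof.

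The genuine gap is the anchor step, and neither of your routes closes it. In the main route you need that, once the diagonal idempotents have been presorted by $w$, the permutation furnished by \cite[Theorem 2]{Is15} can be taken to be the identity; you flag this yourself, and it does not follow from the statement of that theorem --- it is precisely the point the paper settles by appealing to the \emph{proof} of \cite[Theorem 2]{Is15}, namely that the permutation constructed there already satisfies \eqref{eqn:diag_assumption}. In the fallback route, the claim that the diagonal computation ``pins down the block sizes of the realization obtained in Proposition \ref{pro:dual_matrix_realization} to be $(\nu_r,\ldots,\nu_1)$'' is too strong: the computation fixes the multiset of block sizes and the grouping of coordinates into blocks, but not the \emph{order} in which those blocks appear in $\mathcal{Z}_{\widetilde{\VV}}$. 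If that order were different, a $w'$ satisfying \eqref{eqn:diag_assumption} would differ from the permutation of Proposition \ref{pro:dual_matrix_realization} by a block-\emph{reordering} permutation, and conjugating a realization by a block reordering does not in general preserve (V1)--(V3): reordering the blocks of a perfectly good $\ZV$ can destroy the realization, as the space $Z$ of Example \ref{Vin} (a relabelling of $\ZV$, explicitly noted there not to be a matrix realization) shows. So the ordering statement must itself be proved, either by inspecting the construction in \cite{Is15} (the paper's choice) or by pushing your triangularity argument further to verify the (V1)--(V3) structure of $\rho(w)\mathcal{A}_{\VV}$ directly from the action of the lower triangular group you constructed; as written, the proposal leaves this step open.
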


\begin{proof}
In the proof of Proposition \ref{pro:dual_matrix_realization}, we find a permutation matrix $w$ such that
 $\rho(w) \mathcal{A}_{\VV} = \mathcal{Z}_{\widetilde{\VV}}$ by applying \cite[Theorem 2]{Is15}.
And, in view of the proof of \cite[Theorem 2]{Is15},
 we see that this $w$ satisfies condition \eqref{eqn:diag_assumption}.
Let $w' \in \GL(d, \R)$ be another permutation matrix satisfying (\ref{eqn:diag_assumption}).
Then there exist permutation matrices $u_k \in \GL(\nu_k, \R)$ for $k=1, \ldots, r$ such that
$$
 w' = u w\quad \mbox{ with }\quad u := \begin{pmatrix} u_1 & & & \\ & u_2 & & \\ & & \ddots & \\ & & & u_r \end{pmatrix} \in \GL(d, \R).
$$
Then $\rho(u) \mathcal{Z}_{\widetilde{\VV}} = \mathcal{Z}_{\widetilde{\VV}'}$,
 where $\widetilde{\VV}'_{lk} := u_l \widetilde{\VV}_{lk} u_k^{\top}\,\,\,(1 \le k < l \le r)$.
On the other hand,
 noting that  $u_k^{\top} = u_k^{-1}$,  
 we see that the family of $\widetilde{\VV}'_{lk}$ satisfies {\rm(V1)-(V3)}.
In conclusion,
 we have
$$ \rho(w') \mathcal{A}_{\VV} =  \rho(u) \mathcal{Z}_{\widetilde{\VV}} =  \mathcal{Z}_{\widetilde{\VV}'}, $$
 which means that any permutation $w'$ satisfying (\ref{eqn:diag_assumption}) gives a matrix realization of $Q_{\VV}$.
\end{proof}
Let us note that condition \eqref{eqn:diag_assumption} is rather easy to check, since the matrix $\rho(\Phi(I_N)^{-1/2})\Phi(\xi)$ is diagonal if $\xi$ is diagonal.


We remark that some $\phi_i$'s can be omitted from $\Phi$: this is the ``optimization'' of matrix realizations of homogeneous cones,  recently developed 
by Nomura and Yamasaki \cite{NY15}, see Example \ref{DualVin}  below.

\begin{example}\label{DualVin}
Consider the same space $\ZV$ as in Example \ref{Vin}.
The variance function of a Wishart family on $\CVa$
was derived in Example \ref{Vin}. The objective of {Example \ref{DualVin}} is to give the variance function of a Wishart family on $\CV$. We start with constructing a convenient matrix realization of $\CVa$, using Propositions \ref{pro:mx_rn_Qn}, \ref{pro:dual_matrix_realization} and \ref{pro:good_permutation}.

For the dual Vinberg cone, the map $\Phi$ is {defined} as follows (see \cite{Is14}):
$$
\begin{pmatrix}
\xi_1& &\xi_4\\ &\xi_2&\xi_5\\ \xi_4&\xi_5&\xi_3
\end{pmatrix}
\rightarrow
\begin{pmatrix}
\xi_1&\xi_4&&&\\ \xi_4&\xi_3&&&\\&&\xi_2&\xi_5&\\&&\xi_5&\xi_3&\\&&&&\xi_3
\end{pmatrix}.
$$
Since $\Phi(I_3) = I_5$ in this case,
 $\mathcal{A}^0_{\VV}$ is the set of matrices of the form
$$
 \begin{pmatrix} \xi_3 & & & & \\ & \xi_3 & \xi_5 & & \\ & \xi_5 & \xi_2 & & \\ & & & \xi_3 & \xi_4 \\ & & & \xi_4 & \xi_1 \end{pmatrix}
$$
 and applying Proposition \ref{pro:good_permutation}, we get
 a matrix realization of $\CVa$ is given on the space $\mathcal{Z}_{\widetilde{\VV}}$ {of the matrices }of the form 
\begin{equation} \label{eqn:one_realization}
\begin{pmatrix}
\xi_3&&&&\\ &\xi_3&&\xi_5&\\ &&\xi_3&&\xi_4\\
&\xi_5&&\xi_2&\\
&&\xi_4&&\xi_1
\end{pmatrix}.
\end{equation}
Thanks to Proposition \ref{pro:good_permutation}, we have another matrix realization of $\CVa$ as {the space of positive definite matrices of the form}
$$
\begin{pmatrix} \xi_3 & & & & \xi_4 \\ & \xi_3 & & & \\ & & \xi_3 & \xi_5 & \\  & & \xi_5 & \xi_2 & \\ \xi_4 & & & & \xi_1 \end{pmatrix}.
$$
On the other hand, 
an
 optimal matrix realization is obtained by omitting the first diagonal $\xi_3$ in (\ref{eqn:one_realization}). In order to give the variance
 	function for Wishart families on the dual Vinberg cone $\CV$
 	we will use  this  optimal matrix realization.   The  isomorphism $l: \mathcal{Z}_{\widetilde{\VV}} \rightarrow \ZV$ is then  such that
$$
l^{-1}\colon 
\ZV\ni\begin{pmatrix}
\xi_1& &\xi_4\\ &\xi_2&\xi_5\\ \xi_4&\xi_5&\xi_3
\end{pmatrix}
\rightarrow
\begin{pmatrix}
\xi_3&&\xi_5&\\ &\xi_3&&\xi_4\\
\xi_5&&\xi_2&\\
&\xi_4&&\xi_1
\end{pmatrix}\in\mathcal{Z}_{\widetilde{\VV}}.
$$
It is easy to verify by direct computation that
$\delta^{\VV}_{\us}(l(x))=\Delta^{\widetilde{\VV}}_{\us^\ast}(x)$ for any $x\in\mathcal{P}_{\widetilde{\VV}}$, where $\us^\ast=(s_r,\ldots,s_1)$.
This relation is ensured by formula \eqref{strange}. The adjoint map $l^\ast: \ZV \rightarrow \mathcal{Z}_{\widetilde{\VV}} $ is, for $\theta\in \ZV$
	$$
l^\ast\begin{pmatrix}
\theta_1& &\theta_4\\ &\theta_2&\theta_5\\ \theta_4&\theta_5&\theta_3
\end{pmatrix}=
\begin{pmatrix}
\frac{\theta_3}2&&\theta_5&\\ &\frac{\theta_3}2&&\theta_4\\
\theta_5&&\theta_2&\\
&\theta_4&&\theta_1
\end{pmatrix},\ 
(l^\ast)^{-1}\begin{pmatrix}
x&&u&\\ &x&&v\\
u&&y&\\
&v&&z
\end{pmatrix}=
\begin{pmatrix}
z&0&v\\0&y&u\\v&u&2x
\end{pmatrix}.
$$
For $\mathcal{Z}_{\widetilde{\VV}}$ we have $\tilde{N}=4$, $\tilde{r}=3$, $\tilde{n}_1=2$ and $\tilde{n}_2=\tilde{n}_3=1$.
	Theorems \ref{mainP}
	and \ref{main} imply that for $\us\in\Xi$ and $\theta\in\CV$ 
$$\V_{F(\RV)}({\theta})=(l^\ast)^{-1}\circ \V_{F(\mathcal{R}_{\us^\ast}^\ast)}(l^\ast({\theta}))\circ l^{-1},$$
where $\mathcal{R}_{\us^\ast}^\ast$ is the Riesz measure defined on $\mathcal{P}_{\widetilde{\VV}}$ and for $m\in\mathcal{Q}_{\widetilde{\VV}}$, 
\begin{align*}
\V_{F(\mathcal{R}_{\us^\ast}^\ast)}(m)=
\pi\circ\left\{
\frac2{s_3}\rho(\hat m)\right.+ \left( \frac1{s_2}-\frac2{s_3}\right) 
\rho(\hat m-M_1) 
+\left. \left( \frac1{s_1}-\frac1{s_2}\right) 
\rho(\hat m-M_{1,2})\right\},
\end{align*}
with $M_1=\left[(\hat m^{-1})_{\{1:1\}}\right]^{-1}_0$ and $M_{1,2}=\left[(\hat m^{-1})_{\{1:2\}}\right]^{-1}_0$.
Recall that $\{1:i\}$ of $\mathcal{Z}_{\widetilde{\VV}}$, thus $\{1:1\}=\{1,2\}$ and $\{1:2\}=\{1,2,3\}$.
The projection $\pi\colon \mathrm{Sym}(4,\R)\to\mathcal{Z}_{\widetilde{\VV}}$ is given by
 $$\pi\colon\begin{pmatrix}x_{11} & x_{21} & x_{31} & x_{41} \\ x_{21} & x_{22} & x_{32} & x_{42} \\ x_{31} & x_{32} & x_{33} & x_{43} \\ x_{41} & x_{42} & x_{43} & x_{44} \end{pmatrix}
 \to \begin{pmatrix}\frac{x_{11}+x_{22}}{2} &  & x_{31} &  \\  & \frac{x_{11}+x_{22}}{2} & & x_{42} \\ x_{31} &  & x_{33} & \\  & x_{42} &  & x_{44} \end{pmatrix}.$$
For $m=\begin{pmatrix}m_3&&m_5&\\ &m_3&&m_4\\m_5&&m_2&\\&m_4&&m_1\end{pmatrix}$ we have
$
\hat m= 
\begin{pmatrix}
m_3-c&&m_5&\\ &m_3+c&&m_4\\
m_5&&m_2&\\
&m_4&&m_1
\end{pmatrix}$
with \mbox{$c=\frac12(m_4^2/m_1-m_5^2/m_2)$}.
Explicit formulas for matrices $M_1$ and $M_{1,2}$ can  be easily found. 
In particular,  for $\us=(p,p,p)$ one has
$$
\V_{F(\RV)}({\theta})=\frac1p(l^\ast)^{-1}\circ \pi\circ
\left[2\rho(\hat m) -  
\rho(\hat m-M_1)\right]
\circ l^{-1}
$$
where $m=l^\ast({\theta})$ and $\theta\in\CV$.
The last formula, compared with \eqref{pconstant}, confirms the fact that analysis of Wishart laws on homogeneous cones $\CV$ is technically more 
difficult than on the cones $\CVa$.
\end{example}
\section{Applications}\label{appli}

\subsection{Classical Wishart families $F(\mu_{{p}})$  on
$\mathrm{Sym}_+(n,\R)$}
  In this case, $\ZV=\mathrm{Sym}(r,\R)$,  $\CVa=\mathrm{Sym}_+(n,\R)$,  all $n_i=1$, 
  the projection $\pi$ is  the identity map from $\mathrm{Sym}(n,\R)$ to itself
  and $\hat{m}=m$.
  We have $\us={p}{\bf 1}$, 
the measure $\mu_{{p}}$ is the Riesz measure $\RVa$ and
$\us\in\mathfrak{X}$ if and only if ${p}\in\Lambda$.
 Formula \eqref{first} from the Introduction
 is instantly recovered using Theorem \ref{main}.

\subsection { Wishart families on symmetric cones, indexed by $\us\in\R^r$.}
 The cone $\mathrm{Sym}_+(n,\R)$ is the prime example of a symmetric cone,
 that is, a homogeneous cone $\Omega$ which is self-dual ($\Omega^\ast=\Omega$).

The matrix realization of homogeneous cones
(see Section  \ref{Matrix})
does
not coincide with the usual setting
in which symmetric cones are considered,
that is, Jordan algebras, except {for} $\mathrm{Sym}_+(n,\R)$
cone.
This is the reason why there is no ``automatic''
correspondence between formulas for variance functions
in {these} two settings. However, the techniques developed
in this article apply in the symmetric cone setting.

	Here we use the standard notation of \cite{FaKo1994}. Let $V$ be a simple Euclidean Jordan algebra
	of rank $r$ and let $\Omega$ be its associated irreducible symmetric cone. 
	{If $c$ is an idempotent in $V$, we denote by $V(c,1)$ the eigenspace corresponding to the eigenvalue $1$ of the linear operator $\mathbb{L}(x)$ on $V$, which is defined using the Jordan product $\mathbb{L}(x)=xy$.}
	For a fixed Jordan frame $\mathbf{c}=(c_1,\ldots,c_r)$ define subspaces $V^{(k)}=V(c_1+\ldots+c_k,1)$ and $W^{(k)}=V(c_{r-k+1}+\ldots+c_r,1)$. Denote by $P_k$ and $P_k^\ast$ the orthogonal projections of $V$ onto $V^{(k)}$ and $W^{(k)}$, respectively. \\	
	Let $\Delta_{\us}$ be the {generalized power function with
	respect to $\mathbf{c}$} and $\PP$ be the quadratic representation of $\Omega$. 
	We consider  natural exponential families generated by the Riesz
	measure $\RVa$ with the Laplace transform $\Delta_{-\us}$ and $\RV$
	with the Laplace transform $\theta\mapsto\Delta_{\us}(\theta^{-1})$.	
	Using the same techniques as in Theorem \ref{main}, but in
	the Euclidean Jordan algebra framework, we prove the following
\begin{proposition}
	For $m\in\Omega=\Omega^\ast$,
\begin{align*}
\V_{F(\RVa)}(m)=\frac{1}{s_1}\PP(m)+\sum_{i=2}^r \left(\frac1{s_{i}}-\frac1{s_{i-1}}\right) \PP\left(m-[P_{i-1}m^{-1}]^{-1}_{0}\right).
\end{align*}
\begin{align}\label{VSYM}
\V_{F(\RV)}(m)=\frac{1}{s_r}\PP(m)+\sum_{k=1}^{r-1} \left(\frac1{s_{k}}-\frac1{s_{k+1}}\right) \PP\left(m-[P^\ast_{r-k}m^{-1}]^{-1}_{0}\right).
\end{align}
\end{proposition}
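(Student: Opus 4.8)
The plan is to mirror the proof of Theorem \ref{main} but carry out the argument in the language of Euclidean Jordan algebras, so that the cone $\CVa$ is replaced by $\Omega=\Omega^\ast$. The role of the triangular group $\HV$ is played by the solvable subgroup $T$ of $G(\Omega)$ associated to the fixed Jordan frame $\mathbf{c}$, which acts simply transitively on $\Omega$; the role of $\rho$ and $\rho^\ast$ is played by the quadratic representation $\PP$, noting that in the self-dual case $\PP$ is self-adjoint so there is no distinction between the two. The generalized power functions $\Delta_{\us}$ are exactly those of Faraut--Kor\'anyi \cite{FaKo1994}, and the analogue of Proposition \ref{ProPsi}, namely $\psi_{\us}(m)=-(\log\Delta_{-\us})'(m)$ for $\us\in\R_{>0}^r$, follows by the same $J$-function computation as in the proof of Proposition \ref{ProPsi} (or directly from the Gindikin--Riesz theory on symmetric cones).

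First I would compute the second differential $-D^2_{\alpha,\beta}\log\Delta_{-\us}(m)$. Using the standard factorization $\Delta_{\us}(x)=\prod_{k=1}^r \Delta_k(x)^{s_k-s_{k+1}}$ (with $s_{r+1}=0$) of the power function into powers of the principal minors $\Delta_k$ attached to the flag $V^{(1)}\subset\cdots\subset V^{(r)}=V$, together with the Jordan-algebra differentiation formulas for $\log\Delta_k$, one obtains $\scalar{\alpha,\psi_{\us}'(I)\beta}$ as a sum over $k$ of traces of the form $\tr\bigl(P_k(\alpha)\, P_k(m)^{-1}\, P_k(\beta)\, P_k(m)^{-1}\bigr)$ restricted to the subalgebra $V^{(k)}$. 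Evaluating at $m=e$ (the identity of $V$) and using that $P_k-P_{k-1}$ are the orthogonal projections onto the Peirce spaces attached to $c_k$, the operator $\psi_{\us}'(e)$ becomes a linear combination $-\sum_{k=1}^r \tfrac{1}{s_k}(P_k-P_{k-1})$ (up to the correct weights coming from the Peirce multiplicities, which in the self-dual case are all $1$ in the relevant normalization). Since the $P_k-P_{k-1}$ are mutually orthogonal idempotent operators summing to the identity, inverting is immediate: $\V_{F(\RVa)}(e)=-[\psi_{\us}'(e)]^{-1}=\tfrac{1}{s_1}\mathrm{Id}+\sum_{i=2}^r(\tfrac{1}{s_i}-\tfrac{1}{s_{i-1}})P_{i-1}$.

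Next I would transport this from $e$ to a general $m\in\Omega$ using the $T$-equivariance of the variance function, exactly as in Lemma \ref{LEMequiVar}: writing $m=\PP(t)e$ for the unique $t\in T$, one gets $\V_{F(\RVa)}(m)=\PP(t)\,\V_{F(\RVa)}(e)\,\PP(t)$. Then $\PP(t)\,\mathrm{Id}\,\PP(t)=\PP(\PP(t)e)=\PP(m)$ and, crucially, $\PP(t)P_{i-1}\PP(t)$ must be identified with $\PP\bigl(m-[P_{i-1}m^{-1}]^{-1}_0\bigr)$. This identification is the Jordan-algebra analogue of Proposition \ref{proHat}: one checks that $\PP(t)P_{i-1}\PP(t)e = m - [P_{i-1}(m^{-1})]^{-1}_0$, using $P_{i-1}(\PP(t^{-1})e)=P_{i-1}(m^{-1})$, that $P_{i-1}$ commutes with $\PP(t)$ appropriately on the subalgebra $V^{(i-1)}$ (since $t$ preserves the flag), and the analogue of the identity $(S_{\{1:k\}})^{-1}=(S^{-1})_{\{1:k\}}$ for the restriction of elements of $T$ to $V^{(i-1)}$. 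Assembling these gives the stated formula for $\V_{F(\RVa)}$. Finally, the formula for $\V_{F(\RV)}$ follows by the same argument applied to the reversed flag $W^{(k)}$ and the measure $\RV$ with Laplace transform $\theta\mapsto\Delta_{\us}(\theta^{-1})$: here the inverse mean map is $-(\log\Delta_{-\us}(\,\cdot^{-1}))'$, and running the computation with $P_k^\ast$ in place of $P_k$ and $s_r,s_{r-1},\dots$ in reversed order yields \eqref{VSYM}.

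The main obstacle I anticipate is the bookkeeping in the equivariance step, i.e.\ proving the Jordan-algebraic version of Proposition \ref{proHat}: one must verify carefully that $\PP(t)P_{i-1}\PP(t)$, which a priori is just some positive operator, equals the quadratic representation of the concrete element $m-[P_{i-1}m^{-1}]^{-1}_0$, and that this element indeed lies in the closure of $\Omega$ and its ``$0$-completion'' $[\,\cdot\,]_0^{-1}$ makes sense inside $V$. This requires a clean statement of how principal subalgebras, their Jordan inverses, and the solvable group $T$ interact — essentially Proposition III.4.5 and the quadratic-map identities of \cite{FaKo1994} — and a correct matching of the normalizations of $\Delta_k$, $P_k$, and $\PP$ so that the coefficients $\tfrac1{s_i}-\tfrac1{s_{i-1}}$ come out exactly as in Theorem \ref{main}. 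The differentiation of $\log\Delta_{-\us}$ and the inversion of $\psi_{\us}'(e)$ are routine once the right Peirce-decomposition framework is set up.
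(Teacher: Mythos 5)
Your overall strategy is the one the paper intends (it only says the proposition is proved ``using the same techniques as in Theorem \ref{main}, but in the Euclidean Jordan algebra framework''): the analogue of Proposition \ref{ProPsi}, the Hessian at the identity, inversion along mutually orthogonal Peirce projections, equivariance under the triangular subgroup of $G(\Omega)$, and a Jordan version of Proposition \ref{proHat}. However, your execution attaches the wrong power function to each family, and this is a genuine error, not bookkeeping. Since $L_{\RVa}=\Delta_{-\us}$, the map $-(\log\Delta_{-\us})'$ is the \emph{mean} map of $F(\RVa)$; its inverse is $-(\log\Delta_{\us}(\,\cdot\,^{-1}))'$, i.e.\ the derivative of the \emph{dual} power function (the analogue of $-(\log\delta_{-\us})'$ in Proposition \ref{ProPsi}), not $-(\log\Delta_{-\us})'$ as you assert. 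Consequently the Hessian you propose to compute, $-D^2\log\Delta_{-\us}$, decomposed along the flag $V^{(1)}\subset\cdots\subset V^{(r)}$, is diagonal on the ``upper'' hooks $P_k-P_{k-1}$ and, after inversion and telescoping, gives at $m=e$ the operator $\tfrac1{s_r}\mathrm{Id}+\sum_{k=1}^{r-1}\bigl(\tfrac1{s_k}-\tfrac1{s_{k+1}}\bigr)P_k$, which is the value at $e$ of \eqref{VSYM}, i.e.\ of $\V_{F(\RV)}$ — not of the first formula. The correct computation for $F(\RVa)$ differentiates $\Delta_{\us}(\,\cdot\,^{-1})$, whose Hessian at $e$ is diagonal on the complementary hooks, and telescoping yields $\V_{F(\RVa)}(e)=\tfrac1{s_1}\mathrm{Id}+\sum_{i\ge2}\bigl(\tfrac1{s_i}-\tfrac1{s_{i-1}}\bigr)\PP(c_i+\cdots+c_r)$, i.e.\ projections onto $W^{(r-i+1)}$, not onto $V^{(i-1)}$ as in your intermediate formula $\tfrac1{s_1}\mathrm{Id}+\sum_{i\ge2}\bigl(\tfrac1{s_i}-\tfrac1{s_{i-1}}\bigr)P_{i-1}$; indeed, evaluating the stated formula at $m=e$ gives $e-[P_{i-1}e]^{-1}_0=c_i+\cdots+c_r$, so it is these lower Peirce projections that the equivariance step must convert into $\PP\bigl(m-[P_{i-1}m^{-1}]^{-1}_0\bigr)$. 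The same swap reappears at the end, where you take $-(\log\Delta_{-\us}(\,\cdot\,^{-1}))'$ as the inverse mean map of $F(\RV)$; for $L_{\RV}(\theta)=\Delta_{\us}(\theta^{-1})$ the inverse mean map is $-(\log\Delta_{-\us})'$, exactly as in the proof of the Proposition at the start of Section \ref{VarP}.

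There is also an internal inconsistency in your coefficients: the Hessian of either power function at $e$ has eigenvalues $s_k$ on the hooks (each $-D^2\log\Delta_k$ contributes the projection with weight $s_k-s_{k+1}$, which sums to $s_k$ on the $k$-th hook), so $\psi_{\us}'(e)=-\sum_k s_k(\cdot)$, not $-\sum_k s_k^{-1}(\cdot)$ as you write; with your $\psi_{\us}'(e)$, the operator $-[\psi_{\us}'(e)]^{-1}$ would carry the $s_k$'s rather than the $1/s_k$'s, contradicting your own displayed $\V(e)$. Once the two power functions are matched to the right families and the eigenvalues are kept straight, the remaining steps you outline — equivariance under the solvable group acting simply transitively on $\Omega$, and the Jordan analogue of Proposition \ref{proHat} identifying the conjugated Peirce projection with $\PP\bigl(m-[P_{i-1}m^{-1}]^{-1}_0\bigr)$ — do carry through as in Theorem \ref{main}, and the reversed-flag argument then gives \eqref{VSYM}.
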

Here $[\cdot]_0$ denotes the inclusion map from the subalgebras $V^{(k)}$ and $W^{(k)}$ to $V$.
\begin{remark}
Natural exponential families generated by the Riesz measure on symmetric cones were treated in \cite{HaLa01}.
{ In that paper a  formula 
(3.2), Th.3.2, p.935, for $\V_{F(\RV)}(m)$ is announced {but not proven}. 
It is different {and much more complicated} than \eqref{VSYM}
and its proof has gaps (on p.946 it is only proven that for {any} $\underline{S}$
there exists $\us$ such that the variance function of the NEF {generated by} ${\mathcal{R}}_{\underline{S}}$ is equal to the function $V_{\us}$ in (3.2),p.935. {In the present paper we have shown that actually $\underline{S}=\us$.} Note also misleading misprints
in \cite{HaLa01},
``Thm. 3.2" in place of ``Thm. 3.6",
in the title of Section 4  and on page 945.)}
\end{remark}

\subsection{Graphical homogeneous cones}
Let $G=(V,E)$ be an undirected graph, where $V=\{1,\ldots,r\}$ is the set of vertices and $E\subset V\times V$ is the set of undirected edges, that is, if $(i,j)\in E$ then $(j,i)\in E$, $i,j\in V$. 
For statisticians, the parameter space of interest for covariance graph models is the cone $P_G$ of positive definite matrices with fixed zeros corresponding to the missing edges of $G$. More precisely, if
$$\ZG:=\{ (x_{ij})\in\mathrm{Sym}(r,\R)\colon x_{ij}=0\mbox{ if }(i,j)\notin E\}$$
then $P_G$ is defined by
$$P_G:=\ZG\cap \mathrm{Sym}_+(r,\R).$$
It is known that the cone $P_G$ is homogeneous if and only if $G$ is decomposable (chordal) and does not contain the graph $\bullet-\bullet-\bullet-\bullet$, denoted by $A_4$, as an induced subgraph (for details see \cite{LM07,Is13}). For $1 \leq k < l \leq r$, we set $\VV_{lk}:=\R$ if $(k,l)\in E$, and $\VV_{lk}:=\{0\}$ otherwise. Then it can be shown that (possibly after renumeration of vertices), the family $\{\VV_{lk}\}_{1\leq k<l\leq r}$ satisfies \rm{(V1)-(V3)}. One sees ({\cite{Is13}}) that $\CV$ is a graphical cone if and only if $n_1=\ldots=n_r=1$.
{Thus the trace inner product coincides with the standard inner product.  }

 {The results of  the present paper apply to homogeneous graphical cones. However, in the present paper $\ZVa$ was identified with $\ZV$,
whereas in the statistical approach to graphical cones one proceeds as follows.}

Let $I_G$ be the real linear space of $G$-incomplete symmetric matrices, that is, functions $(i,j)\mapsto x_{ij}$ from $E$ to $\R$ such that $x_{ij} = x_{ji}$.
The dual space $\mathcal{Z}_G^\ast$ is identified with $I_G$ through 
$$\scalar{y,x}=\sum_{(i,j)\in E} x_{ij} y_{ij},\qquad (x,y)\in\ZG\times I_G$$
and the dual cone is denoted by
$Q_G:=\{ y\in I_G\colon \scalar{y,x}>0\,\,\forall x\in\overline{P_G}
\setminus\{0\}\}.$

Let $\pi\colon Z_G\to I_G$ be such that $\pi(x)_{ij}=x_{ij}$ for any $(i,j)\in E$.
For any $m\in Q_G$ there exists a unique $\hat{m}\in\mathrm{Sym}_+(r,\R)$ such 
that for all $(i,j)\in E$ one has $\hat{x}_{ij}=x_{ij}$ and such that
$\hat{x}^{-1}\in P_G$ (see \cite{LM07} p.1279).
The last definitions
of $\pi$ and $\hat{m}$ on graphical cones agree with the ones 
given in
 Definitions \ref{piDef} and \ref{hat}.
One {can} check (\cite{Is2016}) that the 
function ${Q_G\ni\eta\mapsto}H(\alpha,\beta,\eta)$ considered in \cite{LM07} equals to the {generalized power} function 
$\delta_{\us}$ for some $\us\in\R^r$, by comparing formula 
\eqref{New_delta}
with the definition of $H(\alpha,\beta,\eta)$.

Thus
 Theorem \ref{main} applies to the cone $Q_G\subset I_G$ with $n_i=1$.
 
 Similarly, by formula \eqref{Delta_delta}, the function
${P_G\ni y\mapsto} H(\alpha,\beta,\pi(y^{-1}))$ introduced in \cite{LM07} on the cone
$P_G$ coincide{s} with the {generalized power} function $\Delta_{\us}$ for some $\us$ and the results of 
Section \ref{VarP} apply to the cone $P_G$. 

\subsection{Non-homogeneous graphical cones}

Recently, the variance function was also computed for the cones $Q_G$, corresponding to 
non-homogeneous graphs $G=A_n$, $n\ge 4$, see
\cite{GIMamane}. The techniques are partly the same, but the lack of an analogue
of the equivariance formula \eqref{equiVar} must be overcome.\\


\noindent{\bf Appendix.} 
The unpublished paper \cite{BH09}
 may seem to contain results of our paper.
However,
 the main results of \cite{BH09}, announced in Theorems 3.3 and 4.2, 
 are false.
The proofs of these theorems are  based on several erroneous arguments.
Consequently, 
 the unpublished paper \cite{BH09} cannot be compared with the results of our paper. 
Here is a short explanation of failures of \cite{BH09}.\\

\noindent
[A] The decomposition (2.7) on Page 5 of \cite{BH09} is basic for their work,
 but it is not proven in the paper. 
It is false. 
Consider the following example.\\

Let $I=\{1,2,3,4\}$ and consider the following partial order on $I$:
$$
1\prec 3, 2\prec 3, 3\prec 4, 1 \not\prec \not\succ  2.
$$ 
This order generates a Vinberg algebra $\kA$ of $4\times 4$
matrices with $a_{12}=a_{21}=0$.

We find separator sets $S_1=S_2=\{3,4\}$ and the minimal elements set
$\wp=\{1,2\}$. 
Consider a diagonal matrix $$X=\mathrm{diag}(1,1,1,1).$$
Then, according to (2.6) of \cite{BH09}, 
$
X_1=\mathrm{diag}(1,0,0,-1), X_2=\mathrm{diag}(0,1,0,-1), X_3=\mathrm{diag}(0,0,1,1),\\
 X_4=\mathrm{diag}(0,0,0,1)
$
and $\sum_i X_i=\mathrm{diag}(1,1,1,0)\not=X$.\\

\noindent
[B] 
In Theorem 3.3 of \cite{BH09},
 the Riesz measure $R_\chi$
 is defined as a measure whose Laplace transform is $\Delta_\chi(\theta^{-1})$ for $\theta \in \mathcal{P}^\ast$,
 based on a wrong idea that, if $\theta = T^\ast\, T$ with $T \in \mathcal{T}^+_l$,
 then $\theta^{-1} = T^{-1} (T^{-1})^\ast$.
The equality does not hold in general because of non-associativity of Vinberg algebra,
 as is observed in the following example.\\

Define $\mathcal{A} := \set{A = (a_{ij}) \in \mathrm{Mat}(3,\R)}{a_{23} = a_{32} = 0}$,
 and let
 $\pi_{\mathcal{A}} : \mathrm{Mat}(3,\R) \to \mathcal{A}$ be
 the orthogonal projection with respect to the trace inner product of $\mathrm{Mat}(3,\R)$.
Then we have
$$
 \pi_{\mathcal{A}}(M) = 
 \begin{pmatrix} m_{11} & m_{12} & m_{13} \\ m_{21} & m_{22} & 0 \\ m_{31} & 0 & m_{33} \end{pmatrix}
 \quad (M = (m_{ij}) \in \mathrm{Mat}(3,\R)).
$$
We define a bilinear product on $\mathcal{A}$ by
$$
 A \cdot B := \pi_{\mathcal{A}}(AB) \in \mathcal{A} \qquad (A,B \in \mathcal{A}).
$$
This product together with the ordinary trace and the conjugation $A^\ast := A^{\top}$
 makes $\mathcal{A}$ a Vinberg algebra graded by a poset $I = \{1,2,3\}$ with $1 \prec 3$ and $2 \prec 3$.
Indeed, it is easy to check the axioms (i) -- (iv) in Page 3 of \cite{BH09}.
The axiom (v) is also easily verified
 because the space $\mathcal{T}_l = \set{\begin{pmatrix} t_{11} & 0 & 0 \\ t_{21} & t_{22} & 0 \\ t_{31} & 0 & t_{33} \end{pmatrix}}{t_{ij} \in \R}$
 is closed under the usual matrix product, so that $T \cdot U = TU$ for $T, U \in\mathcal{T}_l$. 
As for the axiom (vi), it suffices to check that 
\begin{equation} \label{eqn:axiom6} 
 \scalar{T \cdot (U \cdot U^\top),A} = \scalar{(T \cdot U) \cdot U^{\top}, A} \qquad (T,U \in \mathcal{T}_l,\,A \in \mathcal{A}). 
\end{equation}
Note that, for $A_1, A_2, A_3 \in \mathcal{A}$ in general, we have
$$ \scalar{A_1 \cdot A_2, A_3} = \mathrm{tr}\, (\pi(A_1A_2)A_3) = \mathrm{tr}\,A_1A_2 A_3. $$
If $A \in \mathcal{T}_l$, then the left-hand side of (\ref{eqn:axiom6}) is
$$
 \mathrm{tr}\,(T (U \cdot U^\top) A) = \mathrm{tr}\,( (U \cdot U^\top)AT) = \mathrm{tr}\, U U^\top AT,
$$
where the second equality above follows from $AT \in \mathcal{T}_l \subset \mathcal{A}$.
The right-hand side of (\ref{eqn:axiom6}) is
$$
 \mathrm{tr}\,((T \cdot U)U^\top A)
 =  \mathrm{tr}\,((TU)U^\top A) =  \mathrm{tr}\,TU U^\top A.
$$
Thus we obtain (\ref{eqn:axiom6}) in the case $A \in \mathcal{T}_l$. 
Similarly, we can show (\ref{eqn:axiom6}) for $A \in \mathcal{T}_u$.
Since every element of $\mathcal{A}$ is a sum of elements of $\mathcal{T}_l$ and $\mathcal{T}_u$, 
 we have (\ref{eqn:axiom6}) for $A \in \mathcal{A}$, and (vi) is verified.

The homogeneous cones $\mathcal{P}$ and $\mathcal{P}^\ast$ associated to the Vinberg algebra $\mathcal{A}$ are given by 
$$
 \mathcal{P} = \set{T \cdot T^\top}{T \in \mathcal{T}^+_l}, \quad
 \mathcal{P}^\ast = \set{T^\top \cdot T}{T \in \mathcal{T}^+_l}.
$$ 
The two cones are mutually dual in the vector space $\mathcal{H} = \set{X^\top = X}{X \in \mathcal{A}}$
 with respect to the trace inner product.
Note that, if $T \in \mathcal{T}_l$, then the matrix product $T^\top T$ belongs to $\mathcal{A}$.
Thus $T^\top \cdot T = T^\top T$.
Now we consider $T = \begin{pmatrix} 1 & 0 & 0 \\ 1 & 1 & 0 \\ 1 & 0 & 1 \end{pmatrix} \in \mathcal{T}^+_l$.
Then $\theta = T^\top \cdot T \in \mathcal{P}^\ast$ equals
$$\begin{pmatrix} 1 & 1 & 1 \\ 0 & 1 & 0 \\ 0 & 0 & 1 \end{pmatrix} 
\begin{pmatrix} 1 & 0 & 0 \\ 1 & 1 & 0 \\ 1 & 0 & 1 \end{pmatrix}
= \begin{pmatrix} 3 & 1 & 1 \\ 1 & 1 & 0 \\ 1 & 0 & 1 \end{pmatrix}.$$
On the other hand, put
$$ X := T^{-1} \cdot (T^\ast)^{-1} 
= \pi\left(\begin{pmatrix} 1 & 0 & 0 \\ -1 & 1 & 0 \\ -1 & 0 & 1 \end{pmatrix} 
  \begin{pmatrix} 1 & -1 & -1 \\ 0 & 1 & 0 \\ 0 & 0 & 1 \end{pmatrix} \right)
= \pi\begin{pmatrix} 1 & -1 & -1 \\ -1 & 2 & 1 \\ -1 & 1 & 2 \end{pmatrix}   
= \begin{pmatrix} 1 & -1 & -1 \\ -1 & 2 & 0 \\ -1 & 0 & 2 \end{pmatrix}.
$$
Then we observe
\begin{align*}
X \cdot \theta 
&= \pi\left( \begin{pmatrix} 1 & -1 & -1 \\ -1 & 2 & 0 \\ -1 & 0 & 2 \end{pmatrix}
 \begin{pmatrix} 3 & 1 & 1 \\ 1 & 1 & 0 \\ 1 & 0 & 1 \end{pmatrix}
 \right)
= \begin{pmatrix} 1 & 0 & 0 \\ -1 & 1 & 0 \\ -1 & 0 & 1 \end{pmatrix},\\
\theta \cdot X 
&= \pi\left( \begin{pmatrix} 3 & 1 & 1 \\ 1 & 1 & 0 \\ 1 & 0 & 1 \end{pmatrix}
\begin{pmatrix} 1 & -1 & -1 \\ -1 & 2 & 0 \\ -1 & 0 & 2 \end{pmatrix} \right)
= \begin{pmatrix} 1 & -1 & -1 \\ 0 & 1 & 0 \\ 0 & 0 & 1 \end{pmatrix}.
\end{align*}
Thus $X = T^{-1} \cdot (T^{-1})^*$ is not an inverse element of $\theta = T^* \cdot T$ in the Vinberg algebra $\mathcal{A}$.\\

\noindent 
[C] 
Keeping [B] in mind, we denote by $I(\theta)$ the element $T^{-1} \cdot (T^{-1})^\top \in \mathcal{P}$
 for $\theta = T^\top T \in \mathcal{P}^*$ with $T \in \mathcal{T}_l^+$,
 and the Riesz measure $R_\chi$ is a measure whose Laplace transform $L_{R_\chi}(\theta)$
 equals $\Delta_\chi(I(\theta)) = t_{11}^{-2\lambda_1} t_{22}^{- 2\lambda_2} t_{33}^{-2\lambda_3}$.
If $\lambda_1 = \lambda_2 = \lambda_3 = \lambda$, then $\Delta_\chi(I(\theta)) = (\det \theta)^{-\lambda}$,
 where $\det \theta$ stands for the ordinary determinant (not the Vinberg algebra determinant defined in Page 5 of \cite{BH09}).
In this case, Theorem 4.2 of \cite{BH09} implies
\begin{equation} \label{eqn:simple_VR}
 \V_{F(R_\chi)}(m) = \frac{1}{\lambda}P(m) \qquad (m \in \mathcal{P}),
\end{equation}
 where $P(m)A := m\cdot (A \cdot m)\,\,\,(A \in \mathcal{A})$,
 see the last sentence in Page 20 of \cite{BH09}. 
On the other hand,
 the mean map is given by $\mathcal{P}^* \owns \theta \mapsto m = \lambda I(\theta) \in \mathcal{P}$.
Let us examine $\V_{F(R_\chi)}(m_0)$ with $m_0 = \begin{pmatrix} 1 & -1 & -1 \\ -1 & 2 & 0 \\ -1 & 0 & 2 \end{pmatrix}$.
Thanks to the calculation in [B],
 the parameter corresponding to $m_0$ is $\theta_0 = \lambda \begin{pmatrix} 3 & 1 & 1 \\ 1 & 1 & 0 \\ 1 & 0 & 1 \end{pmatrix}$.
Then $\scalar{\V_{F(R_\chi)}(m_0) I_3, I_3}$ is the second derivative of $- \lambda \log \det \theta$ at $\theta = \theta_0$ in the direction of $I_3$,
 so that
 $$ \scalar{\V_{F(R_\chi)}(m_0) I_3, I_3} = \lambda\, \mathrm{tr}\, (\theta_0^{-1} I_3 \theta_0^{-1} I_3)
 = \frac{1}{\lambda}\, \mathrm{tr}  \begin{pmatrix} 1 & -1 & -1 \\ -1 & 2 & 1 \\ -1 & 1 & 2 \end{pmatrix}^2
 = \frac{15}{\lambda}. $$
However, we have
 \begin{align*}
 \frac{1}{\lambda} \scalar{P(m_0)I_3, I_3} = \frac{1}{\lambda} \scalar{(m_0 \cdot (I_3 \cdot m_0)), I_3} 
 = \frac{1}{\lambda} \scalar{m _0\cdot m_0, I_3} = \frac{1}{\lambda} \mathrm{tr}\,m_0^2 = \frac{13}{\lambda}.
 \end{align*}
Therefore (\ref{eqn:simple_VR}) and Theorem 4.2 of   \cite{BH09} are false.

Let us recall the space $\ZV$ in Example \ref{Vin}.
We have a linear isomorphism
 $$ \ell : \ZV \owns X = \begin{pmatrix} x_{11} & 0 & x_{31} \\ 0 & x_{22} & x_{32} \\ x_{31} & x_{32} & x_{33} \end{pmatrix}
 \mapsto \rho(W)X = \begin{pmatrix} x_{33} & x_{32} & x_{31} \\ x_{32} & x_{22} & 0 \\ x_{31} & 0 & x_{11} \end{pmatrix} \in \mathcal{H} = \mathcal{A} \cap \mathrm{Sym}(3,\R), $$
 where $W = \begin{pmatrix} 0 & 0 & 1 \\ 0 & 1 & 0 \\ 1 & 0 & 0 \end{pmatrix}$. 
Then we have
 $$ \ell(\CV) = \set{(WTW^{-1}) (WTW^{-1})^\top }{T \in H_{\mathcal{V}}}
 = \set{U^\top U}{U \in \mathcal{T}_l^+} = \mathcal{P}^\ast,$$
 so that $\ell(\CVa) = \mathcal{P}$. 
Using the isomorphism $\ell$, we can deduce from (\ref{pconstant}) the correct formula $\V_{F(R_\chi)}(m) = \frac{1}{\lambda} \pi_{\mathcal{A}} \circ \rho(\hat{m})$,
 where $\hat{m} = \theta^{-1} \in \mathrm{Sym}(3,\R)$.

\bibliographystyle{plainnat}


\def\cprime{$'$}

\end{document}